\DeclareMathOperator{\cf}{cf}
\DeclareMathOperator{\chr}{\chr}
\DeclareMathOperator{\Ht}{ht}
\DeclareMathOperator{\lex}{lex}
\newcommand{\llex}{<_{\lex}}
\numberwithin{equation}{section}
\newcommand{\tradd}[2]{#1\oplus #2}
\def\slashedarrowfill@#1#2#3#4#5{%
  $\m@th\thickmuskip0mu\medmuskip\thickmuskip\thinmuskip\thickmuskip
   \relax#5#1\mkern-7mu%
   \cleaders\hbox{$#5\mkern-2mu#2\mkern-2mu$}\hfill
   \mathclap{#3}\mathclap{#2}%
   \cleaders\hbox{$#5\mkern-2mu#2\mkern-2mu$}\hfill
   \mkern-7mu#4$%
}
\def\rightslashedarrowfilla@{%
  \slashedarrowfill@\relbar\relbar{\raisebox{1.2pt}{$\scriptscriptstyle\diagup$}}\rightarrow}
\newcommand\xslashedrightarrowa[2][]{%
  \ext@arrow 0055{\rightslashedarrowfilla@}{#1}{#2}}
\def\rightslashedarrowfillb@{%
  \slashedarrowfill@\relbar\relbar/\rightarrow}
\newcommand\xslashedrightarrowb[2][]{%
  \ext@arrow 0055{\rightslashedarrowfillb@}{#1}{#2}}
\def\rightslashedarrowfillc@{%
  \slashedarrowfill@\relbar\relbar{\raisebox{.12em}{\tiny/}}\rightarrow}
\newcommand\xslashedrightarrowc[2][]{%
  \ext@arrow 0055{\rightslashedarrowfillc@}{#1}{#2}}
\tikzset{nomorepostaction/.code=\let\tikz@postactions\pgfutil@empty}
\newcommand{\vareps}{\varepsilon}
\newtheorem{theorem}{Theorem}
\newtheorem{prop}[theorem]{Proposition}
\newtheorem{cor}[theorem]{Corollary}
\newtheorem{con}[theorem]{Conjecture}
\newtheorem{claim}[theorem]{Claim}
\newtheorem{obs}[theorem]{Observation}
\newtheorem{mlemma}[theorem]{Main Lemma}
\newtheorem{tlemma}[theorem]{Lemma}
\newtheorem{tclaim}[theorem]{Claim}
\newtheorem{prob}[theorem]{Problem}
\newcommand{\mf}[1]{\mathfrak{#1}}
\newcommand{\mc}[1]{\mathcal{#1}}
\newcommand{\mb}[1]{\mathbb{#1}}
\newcommand{\oo}{\omega}
\newcommand{\uhr}{\upharpoonright}
\newcommand{\omg}{{\omega_1}}
\newcommand{\dom}{\text{dom}}
\newcommand{\ran}{\text{ran}}
\newcommand{\onto}{\twoheadrightarrow}
\newcommand{\BA}{\textmd{BA}}
\newcommand{\setm}{\setminus}
\newcommand{\tri}{\vartriangleleft}
 \newtheorem{dfn}[theorem]{Definition}
\begin{document}

\title{Uncountable strongly surjective linear orders}

  \author{D\'aniel T. Soukup}
  \address[D.T. Soukup]{Universität Wien,
 Kurt Gödel Research Center for Mathematical Logic, Austria}
  \email[Corresponding author]{daniel.soukup@univie.ac.at}
 \urladdr{http://www.logic.univie.ac.at/$\sim  $soukupd73/}



\subjclass[2010]{03E04, 03E35}
\keywords{linear order, minimal, strongly surjective, Suslin tree, weak diamond, uniformization, $\aleph_1$-dense}

\begin{abstract}
We call a linear order $L$ \emph{strongly surjective} if whenever $K$ is a suborder of $L$ then there is a surjective $f:L\to K$ so that $x\leq y$ implies $f(x)\leq f(y)$.  We prove various results on the existence and non-existence of uncountable strongly surjective linear orders answering questions of R. Camerlo, R. Carroy and A. Marcone. In particular, $\diamondsuit^+$ implies the existence of a lexicographically ordered Suslin tree which is strongly surjective and minimal; every strongly surjective linear order must be an Aronszajn type under $2^{\aleph_0}<2^{\aleph_1}$ or in the Cohen and other canonical models (where $2^{\aleph_0}=2^{\aleph_1}$); finally, we prove that it is consistent with CH that there are no uncountable strongly surjective linear orders at all. 
\end{abstract}
\maketitle

\section{Introduction}

The study of \emph{minimal uncountable linear orders} goes back several decades. Recall that an uncountable linear order $L$ is minimal if $L$ embeds into any of its suborders $K$ i.e. there is an order preserving injection from $L$ into $K$. Minimal linear orders have been studied extensively and beautiful techniques emerged from these investigations (see \cite{baum, ARSh, 5basis, onlymin} for example).  

The dual notion was only recently introduced by R. Camerlo, R. Carroy and A. Marcone \cite{raphael0}: a linear order $L$ is \emph{strongly surjective} if for all $K\subseteq L$, there is surjective $f:L\to K$ so that $x\leq y$ implies $f(x)\leq f(y)$ (we will use the notation $L\onto K$ to denote this). The reader is invited to show that, for example, $\omega$ and $\mb Q$ are strongly surjective but $\omega+1$ is not. Strongly surjective ordinals were characterized in \cite[Corollary 29]{raphael0} and then the authors proceeded by studying strong surjectivity in greater generality \cite{raphael}. In fact, it was shown that there is a great variety of countable strongly surjective linear orders (beside ordinals).

The main question we tackle in our paper is as simple as this: are there any  strongly surjective linear orders which are not countable? If so, what are the possible order types of these linear orders? It was observed in \cite{raphael} already that any strongly surjective linear order $L$ must be \emph{short} i.e. contains no uncountable well ordered subset; this implies that $|L|\leq \mf c=2^{\aleph_0}$ (see \cite[Theorem 3.4]{stevotrees}). Now, an uncountable short linear order either contains an uncountable real order type, or it is a so called \emph{Aronszajn type}\footnote{\emph{Specker types} are also used as an alternative name for the same notion.} (by definition). In turn, we will focus on these two types.

Our first goal is to look at real order types with regards to strong surjectivity. It was proved in \cite{raphael} that if the Proper Forcing Axiom holds then any $\aleph_1$-dense set $X\subset \mb R$ is strongly surjective; furthermore, $X$ is \emph{minimal} under PFA as well \cite{baumiso}. Now, how does minimality and strong surjectivity relate to one another? First in Section \ref{sec:real}, we present, under certain set-theoretic assumptions, an uncountable strongly surjective $X\subset \mb R$ which is not minimal. However, at this point, we are not aware of a homogeneous and minimal linear order which is not strongly surjective.

Now, the Continuum Hypothesis (i.e. $\mf c=\aleph_1$) implies that every strongly surjective $X\subset \mb R$  is actually countable \cite{raphael}. In particular, uncountable strongly surjective real orders may or may not exist. So, what is the exact role of CH here? Firstly, we prove that $\mf c<2^{\aleph_1}$ already implies a stronger result: every uncountable strongly surjective linear order must be Aronszajn. Moreover, we prove that no strongly surjective linear order contains a real suborder of size $\mf c$ in ZFC. Both statements will follow from our Lemma \ref{lm:ineq}. 
Second, we prove that $\mf c=\aleph_2$ is also consistent with the statement that ``every uncountable strongly surjective linear order is Aronszajn"; this can be achieved using Cohen-forcing or other canonical models (e.g. countable support iteration of Sacks-or Miller-forcing) where we apply the technique of parametrized weak diamonds. These topics are discussed in Section \ref{sec:allAronszajn}.

Next, looking at Aronszajn types our main result is the following: the guessing principle $\diamondsuit^+$ implies that there is a ccc\footnote{I.e. there is no uncountable family of pairwise disjoint nonempty intervals.}, strongly surjective Aronszajn linear order $L$.  In \cite{raphael}, the authors outlined a plan to achieve this result building on a theorem of J. Baumgartner, who constructed a minimal Aronszajn type from $\diamondsuit^+$ (in the form of a lexicographically ordered Suslin tree). However, Baugartner's original proof  has a serious gap as pointed out by Hossein Lamei Ramandi (in fact, Baumgartner's crucial \cite[Lemma 4.14]{baum} is false). We intend to present a complete proof for both of these results in Section \ref{sec:diamond}. 

At this point, we achieved that there could be uncountable real or Aronszajn linear orders which are strongly surjective, but CH only allows the second type. Is it possible that there is a model of CH where there are no uncountable strongly surjective linear orders at all? Our final main result is a positive answer to this question: we show, in Section \ref{sec:nouncountble}, that CH together with J. Moore's axiom (A) \cite{onlymin} implies that every strongly surjective linear order is countable.

Our paper closes with a healthy list of open problems.

\subsection{Acknowledgements}

We thank S. Friedman, M. Hru\v s\'ak, J. Moore and A. Rinot for helpful discussions and remarks. We are especially grateful to R. Carroy for stimulating conversations throughout this project. Finally, we are grateful for the anonymous referee's careful reading and many useful comments.

The author was supported in part by the FWF Grant I1921 and OTKA grant no.113047.

  \subsection{Notations}
  
  We use standard set theoretic notation and, in general, we refer the reader to the classic \cite{jech} for undefined notions. For a map $f:X\to Y$ and $y\in Y$ we let $f^{-1}(y)=\{x\in X:f(x)=y\}$.
  
  If not stated otherwise, all linear orders considered in this paper are infinite. Given a linear order $L$ with order $<$, we use the notation $[x,y]_<$ to denote the closed interval between $x$ and $y$ i.e. $\{z\in L:x\leq z\leq y\}$. We say that $D$ is dense in $L$ iff $x<y$ implies that $x<d<y$ for some $d\in D$; $D$ is \emph{$\kappa$-dense} if  $x<y$ implies that $x<d<y$ for $\kappa$-many $d\in D$.
  
  Given a linear order $X$ and linear orders $K_x$ for each $x\in X$ we can define $L=\sum_{x\in X} K_x$ as the disjoint union $\sqcup_{x\in X} K_x$ so that $a<_L b$ iff $a\in K_x,b\in K_y$ and $x<_X y$ or $x=y$ and $a<_{K_x} b$. Note that the map $f:\sum_{x\in X} K_x\to X$ where $f\uhr K_x=x$ witnesses $\sum_{x\in X} K_x\onto X$. On the other hand, if $f:L\onto X$ then $L=\sum_{x\in X}f^{-1}(x)$ and any transversal $\ell_x\in f^{-1}(x)$ defines a copy $\{\ell_x:x\in X\}$ of $X$ inside $L$.

  In Section \ref{sec:nouncountble}, we work with Aronszajn trees  i.e. $\aleph_1$-trees without uncountable branches. We say that a tree is \emph{Hausdorff} if there are no branching at limit levels; in contrast to the literature, we often work with non Hausdorff trees. We refer the reader for more on trees and their relation to linear orders to \cite{stevotrees}.

\section{On strongly surjective real suborders}\label{sec:real}

We mentioned already in the introduction that it is independent of ZFC whether there is an uncountable, strongly surjective suborder of $\mb R$ \cite{raphael}. Furthermore, strongly surjective and minimal linear orders seem to go hand-in-hand as far as suborders of $\mb R$ are concerned.
Our first goal is to further clarify when uncountable strongly surjective suborders of $\mb R$ exist and if so, how are they related to minimal orders. 

Let us recall a theorem from \cite{raphael} first.


\begin{theorem}[Proposition 5.14 \cite{raphael}]\label{raphreal}
    Suppose that $X\subseteq \mb R$ is the unique $\kappa$-dense suborder of the reals up to isomorphism. Then $X$ is strongly surjective (and minimal if $\kappa=\aleph_1$).
   \end{theorem}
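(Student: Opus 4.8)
The plan is to treat the two assertions separately, and to lean on the hypothesis that $X$ is the \emph{unique} $\kappa$-dense suborder of $\mb R$ (up to isomorphism) at the two decisive moments. Throughout I may assume, as is standard for such a uniqueness class, that $X$ has no first or last element.

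For strong surjectivity, the first step is to recast the problem via the remark in the Notations: a witness $X\onto K$ is exactly a presentation of $X$ as a sum $\sum_{k\in K}L_k$ of nonempty orders, the projection being the desired monotone surjection. Hence it suffices to \emph{build} nonempty orders $L_k$ ($k\in K$) so that the abstract sum $L:=\sum_{k\in K}L_k$ is itself a $\kappa$-dense suborder of $\mb R$ (i.e.\ separable, $\kappa$-dense, and without endpoints); uniqueness then forces $L\cong X$, and the projection $L\onto K$ transports across this isomorphism to give $X\onto K$. So the whole task reduces to choosing the blocks $L_k$ cleverly.

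The construction I would use exploits that $K\subseteq\mb R$ is separable. Fix a countable order-dense set $D\subseteq K$, let $J$ be the (countable, since $K$ is separable) set of endpoints of jumps of $K$, and adjoin the first/last element of $K$ if they exist; call the resulting countable set $F\subseteq K$. Now put $L_k\cong X$ for $k\in F$ and $L_k$ a singleton for $k\in K\setminus F$. Separability of $L$ is then immediate: only countably many blocks are nontrivial, and collecting a countable dense subset from each fat block $L_k\cong X$ yields a countable set dense in all of $L$. The delicate half is $\kappa$-density together with the absence of endpoints, and this is the step I expect to be the main obstacle: given $a<b$ in $L$ one must exhibit $\kappa$ points strictly between them. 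The key observations are that a fat block $\cong X$ has $\kappa$ elements above (and below) each of its points; that, since $D$ is dense, any two nonadjacent indices of $K$ are separated by a fat block; and that the only troublesome configuration — two adjacent singleton blocks — is excluded precisely because jump endpoints were placed in $F$. Running through the short case analysis (both points in one fat block; a fat block strictly between; or $a$ in a fat block with $b$ higher) yields $\kappa$ points between $a$ and $b$ in every case, and choosing the extreme blocks of $F$ to be copies of $X$ removes any endpoints. Invoking uniqueness then closes the argument for all infinite $\kappa$.

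For minimality when $\kappa=\aleph_1$, I would argue that $X$ embeds into each of its uncountable suborders. Given uncountable $Y\subseteq X$, it is enough to locate an $\aleph_1$-dense $Y'\subseteq Y$ without endpoints: by uniqueness $Y'\cong X$, so the composite $X\xrightarrow{\ \cong\ }Y'\hookrightarrow Y$ is the required embedding. Producing $Y'$ is the standard condensation argument — all but countably many points of $Y$ are two-sided condensation points, between any two of which $Y$ has uncountably many points, so a recursion of length $\oo_1$ builds the desired $\aleph_1$-dense $Y'$. This part is routine; the real content of the theorem sits in the block-fattening construction and the density bookkeeping flagged above.
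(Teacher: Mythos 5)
Your proof is correct. One structural remark first: the paper itself gives no proof of this statement --- it is imported verbatim as Proposition 5.14 of \cite{raphael} --- so there is no in-paper argument to compare against line by line. What the paper \emph{does} contain is the exact same technique, deployed for its Proposition that the increasing set $A$ of Theorem \ref{ARSh} is strongly surjective: there too one writes the target $K$ as a sum $\sum_{x}K_x$ in which countably many blocks are fattened to copies of the ambient order and the rest are singletons, checks the sum is again a suborder of $\mb R$ of the right kind, and invokes a classification result to conclude the sum is isomorphic to the ambient order (in the paper, ``contains no copy of $-A$ implies $\cong A$'' plays the role that uniqueness of the $\kappa$-dense order plays for you); the identification $L\onto K$ for $L=\sum_{k\in K}L_k$ is likewise exactly the remark from the paper's Notations section that you cite. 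So your argument is in the spirit the authors intend, and your bookkeeping is sound: the jump endpoints and endpoints of $K$ must indeed go into the fattened set $F$, and your case analysis for $\kappa$-density is complete because two adjacent singleton blocks cannot occur and two non-adjacent singleton indices are separated by a member of $D\subseteq F$. Two small points you should make explicit but which are easily repaired: (i) the blanket assumption that $X$ has no endpoints follows from uniqueness itself (if $x_0=\min X$ existed, $X\setminus\{x_0\}$ would be a $\kappa$-dense suborder of $\mb R$ with no minimum, hence not isomorphic to $X$, contradicting uniqueness); (ii) in the minimality half no recursion of length $\omega_1$ is needed --- the set $Z$ of two-sided condensation points of $Y$ lying in $Y$ is co-countable in $Y$, and $Y'=Z\cap(a,b)$ for any $a<b$ in $Z$ is already $\aleph_1$-dense in itself without endpoints, so uniqueness applies directly.
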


   The existence of a unique $\aleph_1$-dense suborder of $\mb R$ was proved consistent by J. Baumgartner \cite{baumiso} in 1973. Let $\BA_\kappa$ denote the statement that any two $\kappa$-dense subsets of $\mb R$ are isomorphic. $\BA_{\aleph_1}$ is a consequence of PFA (but requires no large cardinals) and $\BA_{\mf c}$ fails in ZFC. The consistency of $\BA_{\aleph_2}$ was only recently announced by I. Neeman and his proof uses large cardinals. However, the consistency of $\BA_\kappa$ is still open for $\kappa>\aleph_2$; in turn, Theorem  \ref{raphreal} at the moment  only implies the existence of strongly surjective, real suborders of size $\leq \aleph_2$. 

Now, under $\BA_{\aleph_1}$, any $\aleph_1$-dense suborder $L$ of $\mb R$ is strongly surjective and minimal. Our first goal is to show the following.

\begin{theorem}\label{thm:notmin}
 Consistently, there is an  $\aleph_1$-dense strongly surjective suborder $L$ of $\mb R$ which is not minimal.
\end{theorem}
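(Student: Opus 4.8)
The plan is to build a model by forcing that contains an $\aleph_1$-dense $X\subseteq\mb R$ which is strongly surjective but \emph{not} minimal, and the natural candidate is to combine Baumgartner's consistency result $\BA_{\aleph_1}$ with an extra object that witnesses non-minimality. The cleanest route is to work in a model of $\BA_{\aleph_1}$ (so that \emph{every} $\aleph_1$-dense real order is strongly surjective and minimal by Theorem \ref{raphreal}) and observe that this alone cannot give a non-minimal example, since uniqueness forces all such orders to be mutually bi-embeddable and minimal. Hence $\BA_{\aleph_1}$ is too strong: I want to retain strong surjectivity for \emph{one} specific $X$ while destroying minimality for that same $X$. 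So the real strategy is to isolate the combinatorial content of the proof of Theorem \ref{raphreal} and engineer a weaker hypothesis under which a single $\aleph_1$-dense $X$ is strongly surjective but admits a suborder $K\subseteq X$ into which $X$ does not embed.

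First I would analyze what strong surjectivity of an $\aleph_1$-dense $X\subseteq\mb R$ actually requires: for each $K\subseteq X$ one needs an order-preserving surjection $f:X\onto K$, equivalently (using the decomposition $X=\sum_{x\in K}f^{-1}(x)$ from the Notations) a way to partition $X$ into nonempty convex pieces indexed by $K$ in an order-preserving fashion. Since $X$ is $\aleph_1$-dense and $K$ is a suborder, one wants to "collapse" the gaps of $K$ inside $X$ onto points. The key point is that strong surjectivity is about mapping $X$ \emph{onto} its suborders, whereas minimality is about embedding $X$ \emph{into} its suborders; these are genuinely different, and the cardinality/density bookkeeping for surjections is more forgiving. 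My plan is to force an $\aleph_1$-dense $X$ together with a ``coherent'' system of order-preserving surjections $X\onto K$ for all $K\subseteq X$ (or enough of them), while simultaneously arranging a particular suborder $K_0\subseteq X$ which is, say, everywhere non-isomorphic to initial/final segments of $X$ so that no order-embedding $X\hookrightarrow K_0$ can exist.

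The concrete mechanism I would try is a finite-conditions forcing (in the spirit of Baumgartner's isomorphism forcing) that generically adds $X\subseteq\mb R$ of size $\aleph_1$ together with, for each suborder $K$, a surjection witnessing $X\onto K$; one wants this forcing to be ccc (or proper with the $\aleph_1$ preserved) and to satisfy enough genericity so that strong surjectivity holds in the extension via a standard density/amalgamation argument. To destroy minimality, I would build into $X$ an asymmetry: for instance, arrange that $X$ contains a suborder $K_0$ such that every point of $K_0$ has a gap on one side that is not matched anywhere in $X$, blocking any order-preserving injection $X\hookrightarrow K_0$. Showing strong surjectivity survives requires proving that the finite approximations to the surjections can always be extended (amalgamation over finite partial orders mapping into $\mb R$), and that the generic surjections remain total and onto in the extension; showing non-minimality requires a separate density argument (or an absoluteness/reflection argument on the specific order type of $K_0$ versus $X$).

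The hard part will be reconciling these two demands simultaneously: strong surjectivity is a ``richness'' condition forcing many maps \emph{out} of $X$, while non-minimality is a ``rigidity'' condition forbidding a map \emph{into} $K_0$, and a naive forcing to add all the required surjections tends to add too much homogeneity and thereby accidentally restore minimality (as $\BA_{\aleph_1}$ does). The delicate step is designing the conditions so that the generic surjections never cohere into an embedding of $X$ into $K_0$ — likely by controlling the ``direction'' of collapse (always collapsing nontrivial convex blocks to single points, never stretching) so that surjections are abundant but injections into the deficient suborder $K_0$ are provably absent. Verifying the chain condition of this tailored forcing and the extension arguments for totality and surjectivity of the generic maps is the main technical obstacle I would expect to grind through.
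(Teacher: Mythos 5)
Your proposal correctly identifies the central tension (strong surjectivity demands many maps out of $X$, non-minimality forbids a map into a suborder), and correctly realizes that $\BA_{\aleph_1}$ is too strong, but it stops exactly where the actual work begins, and the step you defer is not a technical grind --- it is the missing idea. Concretely: a single ``finite-conditions forcing'' cannot deliver strong surjectivity, because surjections are needed onto \emph{all} $2^{\aleph_1}$ suborders of $X$, including suborders that only appear in the extension; this forces an iteration of length $\geq 2^{\aleph_1}$ with bookkeeping, and then the real problem is a \emph{preservation} theorem: every stage of the iteration adds new order-preserving maps, and you must prove that no stage ever adds an order-embedding of $X$ into your designated deficient suborder $K_0$. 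You offer no mechanism for this beyond ``controlling the direction of collapse,'' but the danger is not that the generic surjections ``cohere into an embedding'' --- an embedding $X\hookrightarrow K_0$ is a separate object that a ccc iteration can add accidentally --- so the proposal as written has no route to non-minimality, and the property you want $K_0$ to have (``every point of $K_0$ has a gap on one side not matched anywhere in $X$'') is not even a property preserved by passing to the extension.

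The paper avoids designing any new forcing. It works in the known Abraham--Rubin--Shelah model (Theorem \ref{ARSh}), which supplies an $\aleph_1$-dense \emph{increasing} set $A\subseteq\mb R$ with the classification property that every $\aleph_1$-dense set of reals containing no copy of $-A$ is isomorphic to $A$. From this, strong surjectivity of $A$ is a short argument (given uncountable $X\subseteq A$, discard a countable set to make it $\aleph_1$-dense, recognize it as a copy of $A$, and replace countably many points by copies of $A$ to build $K\cong A$ with $K\onto X$). Non-minimality is then obtained not by rigidifying one suborder but by \emph{enlarging} the order: Lemma \ref{lm:glue} shows in ZFC that countably many endpoint-free strongly surjective orders can be summed densely along $\mb Q$ into a single strongly surjective order containing copies of all of them. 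Applying this to $A$ and $-A$ (the reverse of a strongly surjective order is strongly surjective) gives $L$ containing both; $L$ cannot be minimal, since embedding $L$ into its suborder $A$ would embed $-A$ into $A$, contradicting Lemma \ref{isa}. In other words, the paper resolves your richness-versus-rigidity tension by making non-minimality come for free from the coexistence of $A$ and $-A$, and by reducing strong surjectivity of the glued order to strong surjectivity of the pieces --- two steps entirely absent from your outline.
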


We start by a definition: a set $A\subseteq \mb R$ is \emph{increasing} if whenever $n\in \oo$ and $\{a_\xi:\xi<\omg\}\subseteq [A]^{n}$ then there is $\alpha<\beta$ so that $a_\alpha(i)\leq a_\beta(i)$ for all $i<n$. Here, $\{a_\alpha(i):i<n\}$ is the increasing enumeration of $a_\alpha$.

   \begin{tlemma}[\cite{abMA}]\label{isa}
    If $A$ is increasing then $A$ and $-A=\{-x:x\in A\}$ has no common uncountable suborder. 
   \end{tlemma}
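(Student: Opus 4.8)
The plan is to contradict the $n=2$ instance of the definition of \emph{increasing}. First I would unwind what a common uncountable suborder gives us. Suppose, toward a contradiction, that some uncountable order type $L$ embeds order-preservingly into both $A$ and $-A$; fix witnessing copies $B\subseteq A$ and $C\subseteq -A$ together with an order isomorphism $h\colon B\to C$. Since the negation map $x\mapsto -x$ carries $C\subseteq -A$ onto a subset of $A$ in an order-reversing fashion, the composite $\psi\colon B\to A$ given by $\psi(b)=-h(b)$ is an order-reversing injection of the uncountable set $B\subseteq A$ into $A$. Thus the whole lemma reduces to showing that an increasing set admits no order-reversing injection defined on an uncountable subset of itself.

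Next I would normalise the picture. An order-reversing injection is strictly decreasing, so it fixes at most one point of $B$ (two fixed points $b<b'$ would force $\psi(b)=b<b'=\psi(b')$, contradicting $\psi(b)>\psi(b')$); discarding it, I may assume $b\neq\psi(b)$ for all $b\in B$. By the pigeonhole principle one of the sets $\{b\in B:b<\psi(b)\}$ or $\{b\in B:b>\psi(b)\}$ is uncountable, so after passing to an uncountable subset I may assume $b<\psi(b)$ for every $b\in B$ (the other case being symmetric). Now associate to each $b\in B$ the two-element set $a_b=\{b,\psi(b)\}\in[A]^2$; since $b<\psi(b)$ its increasing enumeration is $a_b(0)=b$ and $a_b(1)=\psi(b)$, and the assignment $b\mapsto a_b$ is injective, so $\{a_b:b\in B\}$ is an uncountable family in $[A]^2$.

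The decisive observation is that no two members of this family are coordinatewise comparable. Indeed, if $b<b'$ then $a_b(0)=b<b'=a_{b'}(0)$ while, because $\psi$ is decreasing, $a_b(1)=\psi(b)>\psi(b')=a_{b'}(1)$; so the minima increase and the maxima decrease along $B$, and neither $a_b(i)\le a_{b'}(i)$ for both $i<2$ nor $a_{b'}(i)\le a_b(i)$ for both $i<2$ can hold. Enumerating $\{a_b:b\in B\}$ as $\{a_\xi:\xi<\omega_1\}$, this says there is no pair $\alpha<\beta$ with $a_\alpha(i)\le a_\beta(i)$ for all $i<2$, directly violating the hypothesis that $A$ is increasing. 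This contradiction completes the argument.

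As for difficulty, there is no real computational obstacle here: the only genuine steps are the reduction in the first paragraph and the recognition that a single decreasing map manufactures a coordinatewise antichain of pairs, so that already $n=2$ in the definition of increasing suffices. The one point requiring care is splitting off the case $b<\psi(b)$ from $b>\psi(b)$ before forming the pairs, since a family mixing the two need not be an antichain and the conclusion could fail.
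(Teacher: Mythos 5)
Your proof is correct. Note that the paper does not actually prove this lemma --- it is quoted from Abraham--Shelah \cite{abMA} --- so there is no in-paper argument to compare against; your reduction (a common uncountable suborder yields an order-reversing injection $\psi$ of an uncountable $B\subseteq A$ into $A$, and then the pairs $\{b,\psi(b)\}$ form an uncountable family in $[A]^2$ no two members of which are coordinatewise comparable) is exactly the standard argument behind the citation, and every step checks out against the paper's definition of \emph{increasing}, including the normalisation to $b<\psi(b)$ and the injectivity of $b\mapsto\{b,\psi(b)\}$. One remark for completeness: in the original Abraham--Shelah formulation the uncountable family of $n$-tuples is required to be \emph{pairwise disjoint}, whereas your pairs may share points (e.g.\ if $\psi(b)=b'$ for some $b,b'\in B$). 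With the paper's definition this is irrelevant, but under the disjoint version you would need one extra line: since each point of $A$ lies in at most two of your pairs (as the minimum it determines the pair, and $\psi$ is injective), a straightforward recursion extracts an uncountable pairwise disjoint subfamily, to which the same contradiction applies.
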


   In particular, $BA_{\aleph_1}$ fails if there is an uncountable, increasing $A\subseteq \mb R$.
   
   \begin{theorem}[Theorem 3.1 and 6.2 \cite{ARSh}]\label{ARSh} Consistently, $MA_{\aleph_1}+OCA$ holds and there is an $\aleph_1$-dense, increasing $A\subseteq \mb R$. Furthermore, if $B$ is $\aleph_1$-dense and does not contain a copy of $-A$ then $B$ is isomorphic to $A$.
   \end{theorem}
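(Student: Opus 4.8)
The proof is a forcing construction, and the natural framework is Shelah's \emph{oracle chain condition} method, which is precisely the machinery developed in \cite{ARSh} to build rigid $\aleph_1$-dense real order types while simultaneously arranging $MA_{\aleph_1}$ and $OCA$. The plan is to work over a ground model of $CH$, first adjoining a fixed increasing $\aleph_1$-dense set $A$ (by an initial $\sigma$-closed step whose conditions are countable increasing approximations, or by a direct construction from $CH$), and then running a finite-support iteration $\langle \mb P_\alpha, \dot{\mb Q}_\alpha : \alpha<\oo_2\rangle$ of length $\oo_2$ in which each iterand is one of three kinds: (i) a ccc poset of size $\le\aleph_1$ from the current model, used to force $MA_{\aleph_1}$; (ii) a poset forcing an instance of $OCA$; or (iii) for an $\aleph_1$-dense $B$ containing no copy of $-A$, the poset $\mb P_{A,B}$ of finite order-preserving partial isomorphisms from $A$ to $B$, which adjoins an isomorphism $A\cong B$. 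A bookkeeping along $\oo_2$ guarantees that every ccc poset, every open coloring, and every $\aleph_1$-dense $B$ arising at any intermediate stage is eventually treated.

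The heart of the matter is that $\mb P_{A,B}$ behaves well \emph{exactly} when $B$ omits a copy of $-A$. I would first isolate the only obstruction to the chain condition of $\mb P_{A,B}$: given an uncountable family of conditions, a $\Delta$-system reduction leaves one trying to amalgamate two conditions, and the increasingness of $A$ (together with the induced correlation on the $B$-side) is what produces a compatible pair. The amalgamation can fail only along a ``reversing'' configuration, and by Lemma \ref{isa} such a configuration encodes an embedded copy of $-A$ inside $B$. Hence, whenever $-A\not\hookrightarrow B$, the forcing $\mb P_{A,B}$ satisfies the oracle chain condition relative to a suitable $\diamondsuit$-oracle; this preserves $\oo_1$ and, since $A$ itself is never altered and no copy of $-A$ is ever introduced into it, keeps $A$ increasing.

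The second pillar is the iteration theorem of \cite{ARSh}: finite-support iterations of oracle-cc forcings remain oracle-cc with respect to the iterated oracle, so $\oo_1$ is preserved through all $\oo_2$ steps and the increasingness of $A$ persists into the final model. Interleaving the $MA_{\aleph_1}$ and $OCA$ iterands into this framework then yields at stage $\oo_2$ a model of $MA_{\aleph_1}+OCA$ in which $A$ is an increasing $\aleph_1$-dense set and every $\aleph_1$-dense $B$ omitting $-A$ has been forced isomorphic to $A$; as the bookkeeping exhausts all such $B$, the categoricity clause holds.

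I expect the main obstacle to be fitting all three iterands into a single oracle-cc iteration. The subtle point is preservation of increasingness under the reals added by the $MA_{\aleph_1}$ and $OCA$ steps: increasingness is a statement quantifying over $\aleph_1$-sized families, so one must verify that no new ``reversing'' family of size $\oo_1$ appears after new reals are adjoined, and correspondingly that $\mb P_{A,B}$ retains its chain condition once $B$ may itself be defined from such reals. Threading the $\diamondsuit$-oracle consistently through the open-coloring iterand (which need not be ccc on its face) and through the finite-support limits, while keeping $A$ simultaneously increasing and categorical, is the delicate combinatorial bookkeeping at the core of \cite{ARSh}.
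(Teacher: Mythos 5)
You should first note what the paper actually does with this statement: it is imported verbatim from Abraham--Rubin--Shelah (\cite{ARSh}, Theorems 3.1 and 6.2) and used as a black box; the paper contains no proof, so the benchmark is the ARSh argument itself. Measured against that, your write-up is a roadmap rather than a proof: every load-bearing step is asserted, not carried out. The two central claims --- (1) that the finite partial-isomorphism poset $\mb P_{A,B}$ has the requisite chain condition precisely when $B$ omits a copy of $-A$, with the increasingness of $A$ driving the $\Delta$-system amalgamation, and (2) that the increasingness of $A$ survives the whole length-$\oo_2$ iteration, including the $MA_{\aleph_1}$ and OCA iterands --- are exactly the hard theorems of \cite{ARSh}, and your text explicitly files them under ``obstacles'' instead of resolving them. (By contrast, the bookkeeping/capturing step you describe is the routine part: a set $B$ of size $\aleph_1$ in the final model appears at an intermediate stage by the $\aleph_2$-cc, and ``$-A$ embeds into $B$'' is upward absolute, so treating $B$ when first captured is legitimate.)

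Beyond incompleteness, two concrete points would fail as written. First, your appeal to Lemma \ref{isa} is backwards: that lemma says an increasing $A$ and $-A$ have \emph{no common uncountable suborder}; it is a consequence-extracting tool (used in this paper to get minimality and the failure of $\BA_{\aleph_1}$), and it cannot convert an uncountable antichain in $\mb P_{A,B}$ into an embedding of $-A$ into $B$ --- producing a copy of $-A$ from a failed amalgamation is a separate, genuinely nontrivial argument and is the heart of the ARSh analysis. Second, the oracle-cc framing is structurally in tension with the statement being proved: an oracle-cc iteration yields Martin's Axiom only for the oracle-cc posets, whereas the theorem asserts full $MA_{\aleph_1}$. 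Note that a witness to the failure of increasingness (an uncountable family of $n$-tuples from $A$ with no coordinatewise-increasing pair) consists of ground-model tuples and is upward absolute, so once increasingness dies it stays dead; hence the final model must satisfy that \emph{no} ccc poset destroys the increasingness of $A$. Arranging this coexistence is where the real work lies --- e.g., under $MA_{\aleph_1}$ every ccc poset has the Knaster property, and Knaster posets do preserve increasing sets (thin the conditions to a pairwise compatible family, decide the tuples, and apply increasingness in the ground model), but one must close this loop through all intermediate stages of the iteration, where $MA_{\aleph_1}$ does not yet hold. Your sketch does not address this, and merely naming the oracle does not supply it.
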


   Suppose that $A$ is as in Theorem \ref{ARSh}; note that Lemma \ref{isa} implies that $A$ is minimal. 
   
   \begin{prop}
    $A$ is strongly surjective.
   \end{prop}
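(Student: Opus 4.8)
The plan is to exploit two consequences of the hypotheses as a \emph{recognition} tool and a \emph{decomposition} tool. First, since $A$ is increasing, Lemma \ref{isa} tells us that $A$ and $-A$ share no uncountable suborder; in particular $A$, and hence every suborder of $A$, contains no copy of $-A$, so by Theorem \ref{ARSh} a separable linear order of size $\aleph_1$ is isomorphic to $A$ as soon as it is $\aleph_1$-dense, has no endpoints, and omits $-A$. This will be my only way of certifying that an order equals $A$. Second, I will repeatedly use the reformulation recorded in the Notations: $L\onto X$ holds iff $L$ can be written as a convex sum $L=\sum_{x\in X}L_x$ with every $L_x$ nonempty. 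I will also need the following omitting lemma: if $I$ is a linear order which is either countable or embeds into $A$, then $A\cdot I:=\sum_{i\in I}A$ contains no copy of $-A$. Indeed, a copy of $-A$ inside $\sum_{i\in I}A$ would decompose as $-A=\sum_{i\in I}Y_i$ with each $Y_i$ embedding into $A$; then some $Y_i$ is uncountable (giving a common uncountable suborder of $A$ and $-A$), or else a transversal produces an uncountable suborder of $I$ embedding into $-A$ --- impossible when $I$ is countable, and again yielding a common suborder when $I\hookrightarrow A$. Combined with the recognition criterion this gives $A\cong A\cdot \mb Q$ and, more generally, $A\cong\sum_{t\in J}A_t$ whenever $J$ is separable and $\aleph_1$-dense, only countably many $A_t$ are non-trivial, the sum has no endpoints, and it omits $-A$ (so that it remains a separable $\aleph_1$-dense real order type).

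For the countable case, let $K\subseteq A$ be countable. From $A\cong A\cdot\mb Q=\sum_{q\in\mb Q}A$ and the canonical collapse $\sum_{q\in\mb Q}A\onto\mb Q$ we obtain $A\onto\mb Q$; since $\mb Q$ is strongly surjective (as observed in the introduction) we have $\mb Q\onto K$, and composing the two monotone surjections yields $A\onto K$.

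For the uncountable case, let $K\subseteq A$ with $|K|=\aleph_1$. Put $x\sim y$ iff the interval of $K$ between $x$ and $y$ is countable; this is a convex equivalence relation whose classes $\{C_t:t\in\hat K\}$ are countable (an uncountable subset of $\mb R$ always has two points with uncountably many points of the set strictly between them), so that $K=\sum_{t\in\hat K}C_t$ and $\hat K$ is an $\aleph_1$-dense, separable order of size $\aleph_1$ (a transversal embeds $\hat K$ into $A$, and between any two classes lie uncountably many classes). The decisive point is that only countably many classes have more than one element: choosing for each such class a pair $a_t<b_t$, the open real intervals $(a_t,b_t)$ are pairwise disjoint, which is impossible uncountably often in $\mb R$. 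Let $T^*$ consist of these countably many classes together with the least and greatest elements of $\hat K$ (if any), and set $A_t\cong A$ for $t\in T^*$ and $A_t$ a singleton otherwise. Then $\sum_{t\in\hat K}A_t$ is $\aleph_1$-dense, separable, has no endpoints, and omits $-A$, so by the recognition criterion it is isomorphic to $A$. Finally $A_t\onto C_t$ for every $t$ --- trivially when both are singletons, and by the countable case when $A_t\cong A$ --- so assembling these blockwise surjections gives $A\cong\sum_{t\in\hat K}A_t\onto\sum_{t\in\hat K}C_t=K$.

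The step I expect to be most delicate is the verification that $\sum_{t\in\hat K}A_t$ is again a real order type, that is, separable: naively replacing \emph{every} class by a copy of $A$ would destroy separability, since it would create $\aleph_1$ many non-trivial convex blocks and force any dense subset to be uncountable. The argument therefore genuinely relies on the fact that, inside $\mb R$, all but countably many condensation classes are singletons; a countable dense set for $\sum_{t\in\hat K}A_t$ is then obtained from a countable dense subset of $\hat K$ together with countable dense subsets of the countably many non-trivial blocks. Once this is in hand, Theorem \ref{ARSh} does the remaining work.
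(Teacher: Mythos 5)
Your proposal is correct and follows essentially the same route as the paper: both arguments hinge on the recognition theorem (Theorem \ref{ARSh}) applied to a sum in which countably many blocks are inflated to copies of $A$ (so that the result is separable, $\aleph_1$-dense, omits $-A$, hence is isomorphic to $A$), followed by the canonical collapse of a sum onto its index set. The differences are only organizational: the paper indexes the sum by the target $X$ itself, putting singletons on an $\aleph_1$-dense co-countable part of $X$ and copies of $A$ on the countable remainder, so blocks collapse to points, whereas you pass to the condensation quotient $\hat K$ and surject blockwise via your countable case; likewise your derivation of $A\onto\mb Q$ from $A\cong A\cdot\mb Q$ replaces the paper's direct appeal to density of $A$.
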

   \begin{proof}
   Since $A$ is dense, $A\onto \mb Q$ and so $A\onto X$ for any countable linear order $X$ as well.
   
   Now, let $X\subseteq A$ be uncountable. Then there is a countable $Y\subseteq X$ so that $X\setm Y$ is $\aleph_1$-dense. Since $X\setm Y$ cannot contain a copy of $-A$, $X\setm Y$ must be isomorphic to $A$.  Now let $K=\sum_{x\in X}K_x$ where 
   \[
     K_x= \begin{cases}
 		     \{x\}, & \text{for } x\in X\setm Y, \text{ and} \\
 		    A,  & \text{for } x\in Y.
         \end{cases}
   \] $K$ is still a suborder of $\mb R$ and contains no copies of $-A$; indeed, $K$ is a countable union of copies of $A$. So $K$ is again isomorphic to $A$. However, $K\onto X$ and so $A\onto X$ as desired.

   \end{proof}
   
   Hence, $\BA_{\aleph_1}$ can fail while there are uncountable, strongly surjective suborders of $\mb R$. However, $A$ was minimal so this is not a big surprise.


\begin{proof}[of Theorem \ref{thm:notmin}]
Suppose that $A$ is the increasing set from the model of Theorem \ref{ARSh}. It suffices to find a strongly surjective $ L\subseteq \mb R$ which contains both $A$ and $-A$. Then $L$ is clearly not minimal.

   \begin{tlemma}\label{lm:glue}
 Suppose that $L_i$ is a strongly surjective linear order without endpoints for $i<\oo$. Then there is a single strongly surjective $L$ so that $L$ contains a copy  $L_i$ for any $i<\oo$.
\end{tlemma}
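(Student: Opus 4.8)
The plan is to realize $L$ as a \emph{dense shuffle} of the $L_i$ over the rationals. Fix a colouring $c\colon\mb Q\to\oo$ with each colour class $c^{-1}(i)$ dense in $\mb Q$, and set $L=\sum_{q\in\mb Q}B_q$, where $B_q$ is a copy of $L_{c(q)}$. Since every colour occurs, $L$ contains a copy of each $L_i$, and since $\mb Q$ and all the $L_i$ are without endpoints, so is $L$. It then remains to prove that $L$ is strongly surjective, so fix $K\subseteq L$ and write $K=\sum_{e\in E}K_e$, where $E=\{q\in\mb Q: K\cap B_q\neq\emptyset\}$ and $K_e=K\cap B_e$; note that each $K_e$ is a nonempty suborder of $L_{c(e)}$. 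The goal is to produce a surjection $L\onto K$.

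The key device for the coarse structure is the isomorphism $\sum_{e\in E}\mb Q\cong\mb Q$, valid for every countable $E$ (the left-hand side is a countable dense order without endpoints, as the summands have no endpoints, so Cantor's theorem applies). This yields a monotone surjection $g\colon\mb Q\onto E$ \emph{all of whose fibres $g^{-1}(e)$ are convex and isomorphic to $\mb Q$}. This is the point that makes the whole argument go through: because no fibre is a singleton, we never have to map a single block of the wrong colour onto some $K_e$. Composing the maps obtained fibrewise, it will then suffice to prove the following local statement: if $J\subseteq\mb Q$ is a nondegenerate convex set and $P\subseteq L_i$ is nonempty, then $\sum_{q\in J}B_q\onto P$ (note each colour is still dense in $J$).

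To prove the local statement I would first invoke shortness: $P$ is a suborder of the strongly surjective, hence short, order $L_i$, so $P$ has countable cofinality and countable coinitiality. Consequently $P$ decomposes as a sum $\sum_{n\in D}P_n$ indexed by an interval $D$ of $\mb Z$, where each piece $P_n$ has a least element. Since colour $i$ is dense in $J$ and $J$ has no endpoints, I can choose blocks $B_{q_n}$ of colour $i$ at increasing positions $q_n\in J$, taken cofinal and/or coinitial in $J$ according to whether $P$ has a greatest/least element. Each $B_{q_n}\cong L_i$ surjects onto $P_n$ by strong surjectivity of $L_i$; every remaining block $B_q$ then lies in a bounded gap $(q_n,q_{n+1})$ and can be collapsed to the single point $\min P_{n+1}$ (with the boundary blocks, if any, collapsed to $\min P$ or $\max P$). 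One checks directly that the resulting map is order preserving and onto $P$.

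The main obstacle, and the reason the naive ``map each $B_q$ onto $K_q$'' approach fails, is surjecting onto suborders without endpoints: one cannot collapse a whole down-set of blocks onto a nonexistent least element of $K_e$, and a dense coarse structure $E$ appears to force badly coloured singleton fibres. Both difficulties are dissolved by the two observations above — shortness supplies the $\mb Z$-indexed decomposition into pieces each having a minimum, and $\sum_{e\in E}\mb Q\cong\mb Q$ eliminates singleton fibres — after which only routine order-preservation and surjectivity bookkeeping remains.
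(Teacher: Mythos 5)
Your proposal is correct, and its global architecture coincides with the paper's: the same dense shuffle $L=\sum_{q\in\mb Q}B_q$ with each colour class dense, and the same coarse reduction via $\sum_{e\in E}\mb Q\cong\mb Q$, giving a monotone surjection of $\mb Q$ onto $E$ with convex, non-singleton fibres (the paper's map $h$ with open-interval fibres). Where you genuinely diverge is the local step. The paper never analyzes the target piece $K_e$ at all: it first replaces wrongly coloured blocks by singletons, then maps the fibre's sum of blocks onto a \emph{full copy of} $L_i$ --- choosing a cofinal and coinitial $\mb Z$-sequence $(z_k)$ in $L_i$ (which exists by shortness and lack of endpoints), sending the chosen colour-$i$ blocks onto the intervals $(z_k,z_{k+1})_{L_i}$ and collapsing the rest to the points $z_k$ --- and finally composes with $L_i\onto K_e$, which is free from strong surjectivity. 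You instead surject directly onto $P=K_e$, which requires your extra observation that a suborder of a short order decomposes as $\sum_{n\in D}P_n$ over an interval $D$ of $\mb Z$ with each $P_n$ having a least element; the endpoint difficulties are absorbed there, with gap blocks collapsing to $\min P_{n+1}$ and boundary blocks to $\min P$ or $\max P$. The trade-off: the paper's factoring trick is more modular (only one canonical target, $L_i$ itself, ever needs to be hit, and shortness is invoked only for $L_i$), while your direct route avoids both the intermediate recolouring $\tilde K_q$ and the two-step composition, and it sidesteps a small wrinkle in the paper's argument (its intervals $(z_k,z_{k+1})_{L_i}$ could in principle be empty, e.g. for $L_i=\mb Z$, which needs a one-line fix), since your pieces $P_n$ are nonempty by construction. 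Both proofs rest on the same two pillars --- $\sum_{e\in E}\mb Q\cong\mb Q$ for the coarse structure, and shortness for the $\mb Z$-indexed skeleton --- so this is a variant of the same method rather than a new one, but a clean and correct variant.
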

\begin{proof}
 We define $L=\sum_{q\in \mb Q} K_q$ so that each $K_q$ is a copy of some $L_i$ and $\{q\in \mb Q:K_q$ is a copy of $L_i\}$ is dense in $\mb Q$ for all $i<\oo$. In particular, $\sum_{q\in I} K_q$ is isomorphic to $L$ for any open interval $I\subseteq \mb Q$.
 
 Now fix any $X\subseteq L$. First, let $A=\{r:X\cap K_r\neq \emptyset\}$; note that $X=\sum_{r\in A}{X\cap K_r}$. Take $h:\mb Q\onto A$ so that $h^{-1}(q)$ is an open interval in $\mb Q$ for all $q\in A$ (this can be easily done using that $\mb Q=\sum_{q\in\mb Q}\mb Q$ and that $\mb Q$ is strongly surjective). 
 
 We will  find $f:L\onto X$ so that if $x\in K_q$ and $r=h(q)$ then $f(x)\in K_r\cap X$; see Figure \ref{fig:lmglue} (A).

 \begin{figure}[H]
  \psscalebox{0.7} 
{
\begin{pspicture}(0,-2.385)(19.8,2.385)
\rput[bl](9.28,0.195){\Large{$(B)$}}
\psellipse[linecolor=black, linewidth=0.04, dimen=outer](12.08,1.445)(0.48,0.19)
\psellipse[linecolor=black, linewidth=0.04, dimen=outer](14.28,1.445)(0.48,0.19)
\psellipse[linecolor=black, linewidth=0.04, dimen=outer](16.48,1.445)(0.48,0.19)
\rput[bl](18.66,-0.165){\Large{$L_i$}}
\rput[bl](17.8,1.875){$\sum_{h(q)=r}\widetilde K_q$}
\rput[bl](14.06,1.935){$\tilde K_{q_0}$}
\rput[bl](16.26,1.935){$\tilde K_{q_1}$}
\rput[bl](11.86,1.935){$\tilde K_{q_{-1}}$}
\psdots[linecolor=black, dotsize=0.2](13.26,-0.465)
\psdots[linecolor=black, dotsize=0.2](15.06,-0.465)
\psdots[linecolor=black, dotsize=0.2](16.86,-0.465)
\psdots[linecolor=black, dotsize=0.2](11.46,-0.465)
\rput[bl](13.06,-1.265){$z_0$}
\rput[bl](11.26,-1.265){$z_{-1}$}
\rput[bl](14.86,-1.265){$z_1$}
\rput[bl](16.86,-1.265){$z_2$}
\psline[linecolor=black, linewidth=0.03, tbarsize=0.07055555cm 5.0,rbracketlength=0.15]{(-)}(12.64,1.435)(13.64,1.435)
\psline[linecolor=black, linewidth=0.03, tbarsize=0.07055555cm 5.0,rbracketlength=0.15]{(-)}(14.88,1.435)(15.88,1.435)
\psdots[linecolor=black, dotsize=0.1](11.2,1.495)
\psdots[linecolor=black, dotsize=0.1](10.76,1.495)
\psdots[linecolor=black, dotsize=0.1](10.32,1.495)
\psdots[linecolor=black, dotsize=0.1](18.14,1.415)
\psdots[linecolor=black, dotsize=0.1](17.7,1.415)
\psdots[linecolor=black, dotsize=0.1](17.26,1.415)
\psline[linecolor=black, linewidth=0.03, tbarsize=0.07055555cm 5.0,rbracketlength=0.15]{(-)}(11.78,-0.465)(13.02,-0.465)
\psline[linecolor=black, linewidth=0.03, tbarsize=0.07055555cm 5.0,rbracketlength=0.15]{(-)}(13.54,-0.465)(14.78,-0.465)
\psline[linecolor=black, linewidth=0.03, tbarsize=0.07055555cm 5.0,rbracketlength=0.15]{(-)}(15.36,-0.465)(16.6,-0.465)
\psdots[linecolor=black, dotsize=0.1](18.36,-0.465)
\psdots[linecolor=black, dotsize=0.1](17.92,-0.465)
\psdots[linecolor=black, dotsize=0.1](17.48,-0.465)
\psdots[linecolor=black, dotsize=0.1](10.8,-0.465)
\psdots[linecolor=black, dotsize=0.1](10.36,-0.465)
\psdots[linecolor=black, dotsize=0.1](9.92,-0.465)
\psline[linecolor=black, linewidth=0.03, linestyle=dashed, dash=0.17638889cm 0.10583334cm, arrowsize=0.05291667cm 2.0,arrowlength=1.4,arrowinset=0.0]{->}(12.12,1.115)(12.32,-0.245)
\psline[linecolor=black, linewidth=0.03, linestyle=dashed, dash=0.17638889cm 0.10583334cm, arrowsize=0.05291667cm 2.0,arrowlength=1.4,arrowinset=0.0]{->}(13.1,0.995)(13.26,0.015)
\psline[linecolor=black, linewidth=0.03, linestyle=dashed, dash=0.17638889cm 0.10583334cm, arrowsize=0.05291667cm 2.0,arrowlength=1.4,arrowinset=0.0]{->}(14.22,1.095)(14.14,-0.325)
\psline[linecolor=black, linewidth=0.03, linestyle=dashed, dash=0.17638889cm 0.10583334cm, arrowsize=0.05291667cm 2.0,arrowlength=1.4,arrowinset=0.0]{->}(15.32,0.975)(15.18,0.035)
\psline[linecolor=black, linewidth=0.03, linestyle=dashed, dash=0.17638889cm 0.10583334cm, arrowsize=0.05291667cm 2.0,arrowlength=1.4,arrowinset=0.0]{->}(16.42,1.055)(16.08,-0.285)
\psline[linecolor=black, linewidth=0.03, linestyle=dashed, dash=0.17638889cm 0.10583334cm, arrowsize=0.05291667cm 2.0,arrowlength=1.4,arrowinset=0.0]{->}(14.14,-1.045)(14.14,-2.025)
\psline[linecolor=black, linewidth=0.04](11.88,-2.265)(16.68,-2.265)
\rput[bl](17.2,-2.385){$\emptyset\neq X\cap K_r\subseteq L_i$}
\psline[linecolor=black, linewidth=0.04](1.2,0.775)(7.2,0.775)
\psline[linecolor=black, linewidth=0.04](1.8,-1.025)(6.6,-1.025)
\rput[bl](0.0,-0.425){\Large{$(A)$}}
\rput[bl](7.6,1.175){\Large{$L$}}
\psellipse[linecolor=black, linewidth=0.032, linestyle=dashed, dash=0.17638889cm 0.10583334cm, dimen=outer](4.1,0.775)(1.7,0.4)
\rput[bl](7.6,-0.825){\Large{$X$}}
\psellipse[linecolor=black, linewidth=0.03, linestyle=dashed, dash=0.17638889cm 0.10583334cm, dimen=outer](4.5,-0.975)(0.9,0.45)
\rput[bl](4.14,-2.325){$\emptyset\neq X\cap K_r\subseteq L_i$}
\rput[bl](0.8,1.375){$\sum_{h(q)=r}K_q$}
\psdots[linecolor=black, dotsize=0.2](4.4,0.775)
\psdots[linecolor=black, dotsize=0.2](4.64,-1.005)
\psline[linecolor=black, linewidth=0.04, arrowsize=0.05291667cm 2.0,arrowlength=1.4,arrowinset=0.0]{->}(4.4,0.575)(4.64,-0.885)
\end{pspicture}
}\caption{The proof of Lemma \ref{lm:glue}}
\label{fig:lmglue}
 \end{figure}
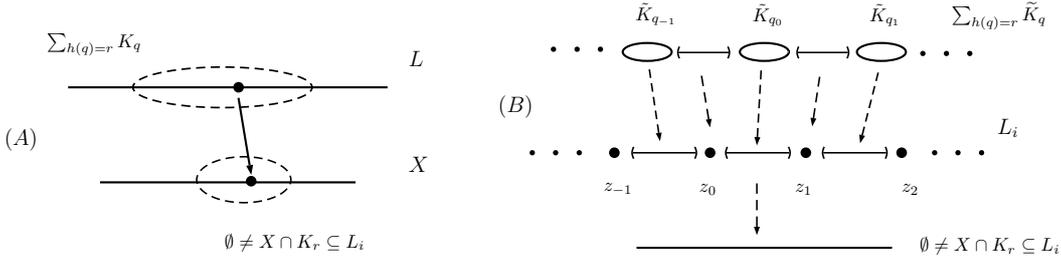

 So fix $r\in A$ and suppose that $X\cap K_r\subseteq L_i$. Our goal now is to define a surjection $f_r:\sum_{h(q)=r}K_q\onto X\cap K_r$; in the end, we will take $f=\bigcup_{r\in A}f_r$.
 
 First, we let $\tilde K_q$ be a copy of $L_i$ if  $K_q$ was isomorphic to $L_i$ and $\tilde K_q$ singleton otherwise for any $q$ with $h(q)=r$. Now, $$\sum_{h(q)=r}K_q\onto \sum_{h(q)=r} \tilde K_q$$ clearly holds. So, it suffices to show that there is some $g:\sum_{h(q)=r} \tilde K_q\onto L_i$ since $L_i\onto  X\cap K_r$ holds as $L_i$ is strongly surjective.
 
So let us pick $(q_k)_{k\in \mb Z}\subseteq h^{-1}(r)$ cofinal, coinitial of type $\mb Z$ so that $\tilde K_{q_k}$ is a copy of $L_i$ and also a cofinal, coinitial $(z_k)_{k\in \mb Z}$ in $L_i$. Define $g$ so that $g\uhr \sum_{q\in (q_{k-1},q_k)_{\mb Q}}\tilde K_q$ is constant $z_k$ and $g\uhr \tilde K_{q_k}:\tilde K_{q_k}\onto (z_k,z_{k+1})_{L_i}$ (the latter exists since $L_i$ is strongly surjective); see Figure \ref{fig:lmglue} (B). 

This ends the construction of $f_r:\sum_{h(q)=r}K_q\onto X\cap K_r$ and hence the construction of $f:L\onto X$.
\end{proof}
This finishes the proof of Theorem \ref{thm:notmin}.

\end{proof}

At this point, it is unclear if a minimal (homogeneous) uncountable $L\subseteq \mb R$ is necessarily strongly surjective as well.

\smallskip

A well-studied strengthening of being increasing is the following:  we say that $A$ is \emph{$k$-entangled} if for any uncountable set $\{(a^{\xi}_0\dots a^{\xi}_{k-1}):\xi<\omg\}$ of $k$-tuples from $A$ and any $\vareps:k\to 2$ there are $\xi<\zeta<\omg$ so that $$a^\xi_i<a^\zeta_i \textmd{ iff } \vareps(i)=0$$ for all $i<k$ (see \cite{ARSh}).  Note that if $L$ is strongly surjective then $L$ is short and being short is equivalent to being 1-entangled.

  \begin{prop} Suppose that $L$ is a 2-entangled linear order. If $f:L\onto X$ and $X\subseteq L$ is uncountable then there is a countable $A\subseteq X$ so that $f^{-1}(x)=\{x\}$ for all $x\in X\setm A$.
   \end{prop}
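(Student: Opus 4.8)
The plan is to argue by contradiction: suppose that $X\setminus A$ is uncountable for every countable $A\subseteq X$, which means that the set $B=\{x\in X:|f^{-1}(x)|\geq 2\}$ is uncountable. For each $x\in B$ I would pick two distinct points $c_x<d_x$ in the fiber $f^{-1}(x)\subseteq L$. The key observation is that since $f:L\onto X$ is order preserving and $X\subseteq L$, we obtain, for each $x\in B$, a triple of points $c_x\leq x\leq d_x$ living in $L$ with $c_x\neq d_x$; more precisely, because $f(c_x)=f(d_x)=x$ and $f$ is monotone, the interval $[c_x,d_x]_<$ is mapped constantly to $x$, and $x$ itself sits somewhere relative to this fiber. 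The goal is to extract from the uncountable family of pairs $\{(c_x,d_x):x\in B\}$ a configuration forbidden by $2$-entangledness.

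The heart of the argument is to set up the right pairs of tuples and the right $\vareps:2\to 2$. First I would thin out $B$ to an uncountable $B'$ on which the relative order type of the three points $c_x, x, d_x$ is fixed (there are only finitely many possibilities), and on which, by a standard $\Delta$-system / separation argument, the pairs $\{c_x,d_x\}$ are pairwise ``aligned'' — e.g. by passing to an uncountable subset I can assume the intervals $(c_x,d_x)$ are pairwise disjoint or pairwise nested in a controlled way, and that $c_x,d_x,x$ are pairwise distinct across different $x$. Then, viewing $\{(c_x,d_x):x\in B'\}$ as an uncountable family of $2$-tuples from $L$, I would apply $2$-entangledness with a cleverly chosen $\vareps$ to find $x\neq y$ in $B'$ with, say, $c_x<c_y$ but $d_x>d_y$ (the ``crossing'' pattern). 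This says the fiber-intervals of $x$ and $y$ interleave in a way inconsistent with $f$ being an order preserving surjection: since $f(c_x)=f(d_x)=x$ and $f(c_y)=f(d_y)=y$ with the four points interleaved, monotonicity of $f$ forces $x=y$, a contradiction.

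The main obstacle, and the step requiring the most care, is matching the entanglement pattern $\vareps$ to the monotonicity constraint so that the conclusion is genuinely contradictory rather than vacuous. The point is that an order preserving $f$ with $f(c_x)=f(d_x)=x$ means the whole closed interval $[c_x,d_x]_<$ collapses to the single value $x$; if two such collapsing intervals cross (neither contains the other, but they overlap), then their common points would have to map to both $x$ and $y$ simultaneously, which is impossible unless $x=y$. So what I really need from $2$-entangledness is a crossing pair $c_x<c_y\leq d_x<d_y$ with $x\neq y$, and I should choose $\vareps$ to produce exactly the order relations $c_x<c_y$ and $d_y<d_x$ (or their overlap-forcing variant) among the selected tuples. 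A delicate point is ensuring the overlap: $2$-entangledness only controls the two coordinates $c$ and $d$ separately, so I must first arrange (by thinning) that the relative position of $d_x$ versus $c_y$ is pinned down — typically by choosing the tuples so that $x<c_x$ or using that the $x$'s themselves separate the intervals. Once the crossing is secured, the contradiction with monotonicity is immediate, so I expect the bulk of the work to lie in the combinatorial thinning that guarantees a true crossing rather than a disjoint or nested pair.
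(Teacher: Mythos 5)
There is a genuine gap, and it occurs in your very first reduction. The conclusion you must refute is that $f^{-1}(x)=\{x\}$ holds off a countable set; its negation says that $\{x\in X: f^{-1}(x)\neq\{x\}\}$ is uncountable. This set splits into two parts: your $B=\{x\in X:|f^{-1}(x)|\geq 2\}$, but also $C=\{x\in X: f^{-1}(x)=\{z_x\}\text{ with } z_x\neq x\}$. Since $f$ is onto, every fiber is nonempty, but a fiber can perfectly well be a singleton that misses $x$ itself, so uncountability of the bad set does \emph{not} imply uncountability of $B$. Your proposal only derives a contradiction when $B$ is uncountable; if instead $C$ is uncountable, there are no two distinct points $c_x<d_x$ inside any fiber, and your whole configuration of collapsing intervals $[c_x,d_x]_<$ never gets off the ground. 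Ruling out $C$ is not cosmetic: the corollary that follows this proposition in the paper uses exactly the conclusion that $f(x)=x$ off a countable set, which is what case $C$ threatens.

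The paper's proof handles both cases at once by choosing the pairs \emph{across the map} rather than inside a single fiber: inductively pick $b_\beta\in X\setm\{f(b_\alpha),b_\alpha,a_\alpha:\alpha<\beta\}$ and $a_\beta\in f^{-1}(b_\beta)\setm\{b_\beta\}$ (this choice is possible whether the fiber is fat or a displaced singleton), which guarantees all the points $a_\alpha,b_\alpha$ are distinct; if this runs for $\omg$ steps, apply $2$-entangledness to the pairwise disjoint pairs $(a_\alpha,b_\alpha)$ with $\vareps(0)=0$, $\vareps(1)=1$ to get $\alpha<\beta$ with $a_\alpha<a_\beta$ and $b_\alpha>b_\beta$, contradicting monotonicity since $f(a_\alpha)=b_\alpha$ and $f(a_\beta)=b_\beta$. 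You could repair your argument by treating $C$ with the pairs $(z_x,x)$ in the same fashion (after a disjointification, since $z_x$ may itself be a point of $C$), but that is precisely the paper's mechanism. For what it is worth, your case-$B$ argument is correct and needs none of the thinning you worry about: the pairs $(c_x,d_x)$ are automatically pairwise disjoint because distinct fibers are disjoint, and with $\vareps=(0,1)$ entangledness gives $c_x<c_y$ and $d_y<d_x$, whence $c_x<c_y<d_y<d_x$ automatically; the two fiber intervals then overlap and convexity of fibers forces $x=y$. So the overlap you were trying to engineer comes for free, but this does not save the proposal from the missing case.
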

   \begin{proof}
  Inductively define $a_\alpha,b_\alpha=f(a_\alpha)$ for $\alpha<\omg$ so that $b_\beta\in X\setm \{f(b_\alpha),b_\alpha,a_\alpha:\alpha<\beta\}$ and $a_\beta\in f^{-1}(b_\beta)\setm\{b_\beta\}$. It is straightforward to check that $a_\beta,b_\beta\notin \{a_\alpha,b_\alpha:\alpha<\beta\}$. 
  
  If the induction can go on for $\omg$ steps then $\{(a_\alpha,b_\alpha):\alpha<\omg\}$ violates that $L$ is 2-entangled. Hence, $f^{-1}(b)=\{b\}$ for all $b\in X\setm A $ for some $\beta<\omg$ where $A=\{f(b_\alpha),b_\alpha,a_\alpha:\alpha<\beta\}$.
\end{proof}
  

\begin{cor}
 If $L$ is uncountable and strongly surjective then $L$ is not 2-entangled.
\end{cor}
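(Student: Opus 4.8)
The plan is to argue by contradiction: assume $L$ is uncountable, strongly surjective and $2$-entangled, and exhibit one suborder $K\subseteq L$ admitting no monotone surjection $L\onto K$, contradicting strong surjectivity. The engine is the Proposition above. For any uncountable $K\subseteq L$ and any surjection $f:L\onto K$, there is a countable $A\subseteq K$ with $f^{-1}(x)=\{x\}$ for $x\in K\setm A$. Two things follow: each fibre $f^{-1}(a)$ with $a\in A$ is convex (if $u<v<w$ and $f(u)=f(w)=a$ then $f(v)=a$ by monotonicity) and disjoint from $K\setm A$ (a point of $K\setm A$ lying in $f^{-1}(a)$ would have to equal $a$); and $L\setm K\subseteq\bigcup_{a\in A}f^{-1}(a)$ (if $y\notin K$ then $f(y)\in A$, else $y\in f^{-1}(f(y))=\{f(y)\}\subseteq K$). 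Hence it suffices to find $K$ with $L\setm K$ uncountable such that between any two points of $L\setm K$ there are uncountably many points of $K$: then no convex set avoiding $K\setm A$ could contain two points of $L\setm K$, so each fibre would meet $L\setm K$ in at most one point, forcing $L\setm K$ countable --- a contradiction.

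Thus the real task is to produce two disjoint uncountable sets $D,E\subseteq L$ such that $(u,v)_L\cap E$ is uncountable whenever $u<v$ lie in $D$; putting $K=L\setm D\supseteq E$ then finishes the proof, since $(u,v)\cap(E\setm A)$ remains uncountable after deleting the countable $A$. To build such $D,E$ I first reduce to an $\aleph_1$-dense situation. Since $L$ is strongly surjective it is short, so it contains no uncountable well-ordered or reverse-well-ordered subset. Declaring $x\approx y$ when the interval between $x$ and $y$ is countable gives a convex equivalence relation all of whose classes are countable: an uncountable class would have all its intervals countable, hence (by shortness) countable cofinality and coinitiality, and so would itself be a countable union of countable intervals, a contradiction. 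Consequently any transversal $T$ of $\approx$ is $\aleph_1$-dense in itself, because between any two of its points lie uncountably many distinct classes, hence uncountably many points of $T$.

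It remains to split an $\aleph_1$-dense order into two $\aleph_1$-dense pieces. I may assume $|T|=\aleph_1$, replacing $T$ by $T\cap M$ for an $M\prec H(\theta)$ of size $\aleph_1$ with $\omega_1\subseteq M$ (elementarity keeps $T\cap M$ uncountable inside every interval). Now run a recursion of length $\omega_1$ in which every pair $(a,b)$ from $T$ is scheduled cofinally often; at each stage I throw one fresh point of the current interval into $E$ and another into $D$. Freshness is always available since the scheduled interval is uncountable while only countably many points have been used before each stage. Each interval is handled at uncountably many stages, so both $E$ and $D$ end up uncountable and $\aleph_1$-dense in $T$; in particular $(u,v)_L\cap E$ is uncountable for all $u<v\in D$, as required, and the contradiction above applies with $K=L\setm D$.

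The main obstacle is precisely this reduction to a densely interleaved pair: the Proposition only has force on genuinely uncountable intervals of $L$, so one must first locate, inside an arbitrary uncountable short order, an $\aleph_1$-dense sub-structure in which two uncountable sets can be interleaved. The condensation argument of the second paragraph is what makes this possible; once $D$ and $E$ are in hand, the contradiction is immediate.
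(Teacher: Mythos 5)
Your argument is correct, and its engine is the same as the paper's: feed a co-uncountable suborder $K$ into the Proposition, observe that the fibres over the countable exceptional set $A$ are convex and disjoint from $K\setminus A$, and conclude that two points of $L\setminus K$ collapsed by $f$ would have to sandwich a point of $K\setminus A$, which is impossible. (The paper phrases this as a pigeonhole on $f[L\setminus X]\subseteq A$ followed by the same sandwiching, so the contradiction itself is identical.) Where you genuinely diverge is in producing the suborder. The paper simply says ``pick an $\aleph_1$-dense $X\subseteq L$ so that $L\setminus X$ is uncountable'', where, per its own definitions, $\aleph_1$-dense means $\aleph_1$-many points of $X$ in every interval of $L$; as literally stated this is not always possible, because a $2$-entangled order need not be dense in itself (e.g.\ adjoin two isolated points above a $2$-entangled set of reals: an uncountable disjoint family of pairs can use them at most twice, so entangledness survives). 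The implicit repair is that this obstruction is only countable --- by $2$-entangledness at most countably many condensation classes are non-singletons, since two non-singleton classes $[x_\xi,y_\xi]$, $[x_\zeta,y_\zeta]$ can never nest --- after which the paper's choice of $X$ can be made. Your proof instead builds the required object from shortness alone: countable condensation classes, an $\aleph_1$-dense-in-itself transversal, the elementary-submodel reduction to size $\aleph_1$, and the bookkeeping recursion yielding the interleaved pair $D$, $E$ with $K=L\setminus D$. This is longer, but it is complete where the paper is terse, and it cleanly confines the use of $2$-entangledness to the Proposition itself.
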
\begin{proof}
          Suppose that $L$ is also 2-entangled and pick an $\aleph_1$-dense $X\subseteq L$ so that $L\setm X$ is uncountable. Suppose that $f:L\onto X$.  Let $A$ be countable so that $f^{-1}(x)=\{x\}$ for all $x\in X\setm A$. In particular $f(x)=x$ for all $x\in X\setm A$. Also, $f[L\setm X]\subseteq A$ and so there is $c<d\in  L\setm X$ so that $f(c)=f(d)\in A$. Now, there must be some $x\in X\setm A$ so that $c<x<d$; however, $f(x)=x\in A$ is a contradiction.
          
         \end{proof}

We don't know if a strongly surjective linear order can contain a 2-entangled set. \cite{ARSh} claims that $\textmd{MA}_{\aleph_1}$ is consistent with the statement that every uncountable set of reals contains an $\aleph_1$-dense, 2-entangled set. So we conjecture that there are no uncountable real suborders which are strongly surjective in this model of $\textmd{MA}_{\aleph_1}$.

Also, we don't know how to produce strongly surjective orders of size $>\aleph_2$, or if every strongly surjective (real) order necessarily contains a minimal suborder.

\section{When all uncountable, strongly surjective linear orders are Aronszajn}\label{sec:allAronszajn}

Now, we look at various models where every uncountable, strongly surjective linear order must be Aronszajn. It was proved in \cite{raphael} already that $\mf c<2^\kappa$ implies that there are no strongly surjective  suborders of $\mb R$ of size $\kappa$. 

First, we strengthen the above by proving the next  rather general result.

\begin{tlemma}\label{lm:ineq}
 Suppose that $L$ is strongly surjective, $K\subseteq L$ is arbitrary and $D\subseteq K$ is dense in $K$. Then $2^{|K\setm D|}\leq 2^{|D|}$. 
\end{tlemma}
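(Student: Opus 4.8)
The plan is to bound the number of suborders $M$ with $D\subseteq M\subseteq K$, which is exactly $2^{|K\setm D|}$, by $2^{|D|}$. I may assume $D$ is infinite, since otherwise $K$ is finite and there is nothing to prove. First note that, as $D$ is dense in $K$, distinct points of $K$ determine distinct cuts of $D$; hence $K$ order-embeds into the Dedekind completion $\overline{D}$ over $D$, giving $|K|\le 2^{|D|}$. For each admissible $M$ I would use strong surjectivity to fix a surjection $f_M:L\onto M$, so that $M=\ran(f_M)$, and then code $M$ by the restriction $g_M:=f_M\uhr D\in K^D$. There are at most $|K|^{|D|}\le 2^{|D|}$ such codes, so it suffices to show that each code is shared by at most $2^{|D|}$ suborders $M$.

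The engine is a \emph{jump lemma}: if $f,f':P\to K$ are order preserving, $D\subseteq P$ is dense in $P$, and $f\uhr D=f'\uhr D=:g$, then $f$ and $f'$ differ in at most $|D|$ points. Indeed, for $x\in P$ order preservation forces both $f(x)$ and $f'(x)$ into the interval $J_x:=(\sup\{g(d):d<x\},\inf\{g(d):d>x\})$ computed in $\overline{K}$, so $f(x)=f'(x)$ whenever $J_x$ is degenerate. For $x<x'$ the density of $D$ in $P$ yields $d_0\in D$ with $x<d_0<x'$, whence $\sup J_x\le g(d_0)\le\inf J_{x'}$ and the $J_x$ are pairwise disjoint. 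Since $D$ is dense in $\overline{K}$, each nondegenerate $J_x$ meets $D$, and disjointness leaves at most $|D|$ nondegenerate $J_x$, as claimed.

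When $D$ is dense in all of $L$ (in particular when $K=L$) this closes the argument quickly. If $g_M=g_{M'}$ then the jump lemma with $P=L$ shows $f_M,f_{M'}$ agree off a set $A$ with $|A|\le|D|$, so $M\triangle M'=\ran(f_M)\triangle\ran(f_{M'})\subseteq f_M[A]\cup f_{M'}[A]$ has size at most $|D|$. Thus every code is realised only by suborders within symmetric difference $\le|D|$ of a fixed one, and there are at most $|K|^{|D|}\le 2^{|D|}$ of those; multiplying the two counts gives $2^{|K\setm D|}\le 2^{|D|}\cdot 2^{|D|}=2^{|D|}$.

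The main obstacle is that in general $D$ is only dense in $K$, not in $L$, so the jump lemma does not control $f_M$ on the maximal $D$-free intervals (``gaps'') $I$ of $L$. On each gap the surjection is free, and $f_M\uhr I$ surjects onto $M_I:=M\cap(a_I,b_I)$, where $a_I:=\sup\{g_M(d):d\in D,\ d<I\}$ and $b_I:=\inf\{g_M(d):d\in D,\ d>I\}$ depend only on the code $g_M$. The intervals $(a_I,b_I)$ are pairwise disjoint, only $\le|D|$ of them are nondegenerate, and on each of those the sets $D_I:=D\cap(a_I,b_I)$ are dense in $K_I:=K\cap(a_I,b_I)$ and, crucially, are pairwise disjoint subsets of $D$. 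Hence the number of $M$ sharing a fixed code is at most $\prod_I N_I$, where $N_I$ bounds the possible $M_I$; if each $N_I\le 2^{|D_I|}$ then the product equals $2^{\sum_I|D_I|}\le 2^{|D|}$, again giving the bound. Estimating the local factors reproduces the very same statement for the sub-configuration $(I,K_I,D_I)$, so the delicate point — and the step I expect to be hardest — is to turn this localisation into a legitimate well-founded recursion, or to replace it by a single simultaneous counting argument carried out across all gaps at once.
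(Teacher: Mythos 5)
Your overall counting frame---code each admissible $M$ with $D\subseteq M\subseteq K$ by data of size at most $2^{|D|}$ attached to a chosen surjection $f_M:L\onto M$---is the same as the paper's, your bound $|K|\leq 2^{|D|}$ is right, and your jump lemma correctly settles the special case where $D$ is dense in all of $L$. But the proposal is not a proof, and you say so yourself: in the general case everything is reduced to the local bounds $N_I\leq 2^{|D_I|}$, and the proposed recursion is genuinely circular. The sub-configuration $(I,K_I,D_I)$ is not an instance of the statement being proved: $D_I$ and $K_I$ live on the range side and need not be subsets of the domain $I$, so your coding device---restricting a surjection to $D$ viewed as a subset of its domain---is not even available there, and no quantity decreases in passing from $(L,K,D)$ to $(I,K_I,D_I)$ that could found a well-ordered recursion. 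There is also a second, smaller gap: the identity $\prod_I N_I=2^{\sum_I|D_I|}$ tacitly assumes $N_I=1$ for every gap with $D_I=\emptyset$ but $K_I\neq\emptyset$ (by density $K_I$ is then a singleton $\{y_I\}$). This is true, but needs an argument you do not give: density of $D$ in $K$ forces $a_I,b_I\notin K$, hence $f_M[I]\subseteq [a_I,b_I]\cap K=\{y_I\}$, so $y_I\in M$ is forced. Without it you could only bound $N_I$ by $2$ for such gaps, and since their number is bounded only by $|K|$ (not by $|D|$), the product could be as large as $2^{|K|}$.

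The idea you are missing---and the way the paper closes the argument in one step---is to code by \emph{preimages} rather than images: to an order preserving $f:L\to K$ with $D\subseteq \ran(f)$ attach the sequence $(f^{-1}(q))_{q\in D}$ of convex subsets of $L$. Density of $D$ in $K$ makes this code determine the range outright: if $f^{-1}(q)=g^{-1}(q)\neq\emptyset$ for all $q\in D$ and $y=f(x)\notin D$, then $f^{-1}(q)<x$ for all $q\in D^-=\{q\in D:q<y\}$ and $x<f^{-1}(q)$ for all $q\in D^+=\{q\in D:y<q\}$, the point $y$ is the unique element of $K$ lying between $D^-$ and $D^+$, and the same inequalities applied to $g$ give $g(x)=y$; hence $\ran(f)=\ran(g)$. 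So there is no per-code multiplicity at all, no gaps to localize over, and no recursion. Strong surjectivity is used twice: every admissible $M$ is a range (so $2^{|K\setm D|}$ is at most the number of codes), and $L$ is short, so every convex subset of $L$ is a countable union of closed intervals, whence the family $\mc C(L)$ of convex sets has size at most $|L|^\oo\leq \mf c$ and the number of codes is at most $\mf c^{|D|}=2^{|D|}$. Your jump lemma is exactly the image-side analogue of this density argument; only the preimage version survives the weakening of ``$D$ dense in $L$'' to ``$D$ dense in $K$''.
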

\begin{proof}
If $L$ is strongly surjective then any left/right limit point of $L$ is the limit of a countable increasing/decreasing sequence. This implies that any  convex subset $C$ of $L$ is a countable union of intervals, and in turn, the family of all convex subsets $\mc C(L)$ has size $\leq |L|^\oo\leq \mf c$ (since the number of intervals is $|L|\leq \mf c$).
  
  Suppose that $D$ is dense in $K\subseteq L$. Consider two order preserving maps $f,g:L\to K$ such that $f^{-1}(q)=g^{-1}(q)\neq \emptyset$ for all $q\in D$. We claim that $\ran(f)=\ran(g)$. Suppose that $f(x)=y\in \ran(f)\setm D$. Hence $q< y$ implies $f^{-1}(q)< x$ (let us denote these $q$ with $D^-$) and $y< q$ implies $x< f^{-1}(q)$ (let us denote these $q$ with $D^+$) for all $q\in D$. As $D$ is dense, if $z\in K$ satisfies $D^-< z< D^+$  then $z=y$. Now, what could be $g(x)$?  $f^{-1}(q)< x$ implies that $\{q\}=g(f^{-1}(q))< g(x)$ for all $q\in D^-$ and  $x< f^{-1}(q)$ implies  $g(x)< \{q\}=g(f^{-1}(q))$ for all $q\in D^-$. Hence, by the previous observation, $y=g(x)\in \ran(g)$.
 	
 	Now, the number of choices for a sequence $(C_q)_{q\in D}$ where $C_q\in \mc C(L)$ is at most $|\mc C(L)|^{|D|}\leq \mf c^{|D|}=2^{|D|}$. So,  $2^{|D|}$ is an upper bound for the number of possible ranges for an order preserving map $f:L\to K$ with $D\subseteq \ran(f)$. Since $L$ is strongly surjective, this number is $2^{|K\setm D|}$ i.e. the number of subsets of $K$ containing $D$. So  $2^{|K\setm D|}\leq 2^{|D|}$ as desired.

\end{proof}

The following two corollaries immediately follow:

\begin{cor}\label{cor:noc}
 A strongly surjective linear order cannot contain real suborders of size $\mf c$. 
\end{cor}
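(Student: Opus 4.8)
The plan is to derive a contradiction with Lemma~\ref{lm:ineq} by exhibiting, inside any real suborder $K\subseteq L$ of size $\mf c$, a configuration that forces $2^{\mf c}\le 2^{\aleph_0}=\mf c$, which is impossible by Cantor's theorem. Concretely, suppose $L$ is strongly surjective and $K\subseteq L$ is order isomorphic to a subset of $\mb R$ with $|K|=\mf c$. If I can find $K'\subseteq K$ of size $\mf c$ together with a countable $D\subseteq K'$ that is dense in $K'$ (in the strict sense of the lemma, i.e.\ meeting every nonempty open interval of $K'$), then applying Lemma~\ref{lm:ineq} to $D\subseteq K'\subseteq L$ gives $2^{|K'\setm D|}\le 2^{|D|}$; here $|D|=\aleph_0$ while $|K'\setm D|=\mf c$, so $2^{\mf c}\le\mf c$, a contradiction.

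The main obstacle is that an arbitrary suborder of $\mb R$ need not be densely ordered: it may have jumps (adjacent pairs), and then \emph{no} subset can be dense in the strict sense required by the lemma. So the crucial step is to pass from $K$ to a densely ordered $K'\subseteq K$ that still has size $\mf c$. For this I would use two-sided condensation points: call $x\in\mb R$ a two-sided condensation point of $K$ if $(x-\varepsilon,x)\cap K$ and $(x,x+\varepsilon)\cap K$ are both uncountable for every $\varepsilon>0$, and let $K'$ be the set of such points lying in $K$. A routine Lindel\"of-type count (for each point of $K$ failing this, choose a rational witness on the offending side, and bound the fibers of this map) shows that $K\setm K'$ is countable, so $|K'|=\mf c$. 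Moreover $K'$ is densely ordered: if $x<y$ in $K'$ then $(x,y)\cap K$ is uncountable because $x$ is a right condensation point, and since only countably many points of $K$ lie outside $K'$, the set $(x,y)\cap K'$ is still uncountable, in particular nonempty, giving a point of $K'$ strictly between $x$ and $y$.

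Finally, the densely ordered order $K'$ carries a countable order-dense subset $D$: choosing one point of $K'$ from each nonempty set of the form $(p,q)\cap K'$ with $p<q$ rational yields such a $D$, and since $K'$ has no jumps this $D$ is dense in the strict sense (between any $x<y$ in $K'$ pick $z\in K'$ and rationals $x<p<z<q<y$, so the chosen point of $(p,q)\cap K'$ lands strictly between $x$ and $y$). As $|K'|=\mf c$ we have $|K'\setm D|=\mf c$, and Lemma~\ref{lm:ineq} now yields $2^{\mf c}\le 2^{\aleph_0}=\mf c$, contradicting $2^{\mf c}>\mf c$. I expect the extraction of the densely ordered $K'$ of full size to be the only genuinely nontrivial point; everything else is a direct application of the lemma together with elementary cardinal arithmetic.
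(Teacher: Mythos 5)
Your proof is correct and takes the paper's approach: the paper derives this corollary directly from Lemma \ref{lm:ineq} (describing it as immediate), exactly as you do, reaching the contradiction $2^{\mf c}\le 2^{\aleph_0}=\mf c$. The condensation-point refinement you supply --- passing to a densely ordered $K'\subseteq K$ of size $\mf c$ admitting a countable subset that is dense in the paper's strict sense --- is precisely the routine step the paper leaves implicit, and your execution of it (countable fibers over rational witnesses, so $K\setminus K'$ is countable) is sound.
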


We will later see that there could be strongly surjective linear orders of size $\mf c=\aleph_1$. 

\begin{cor}\label{cor:onlyA}
 $\mf c<2^{\aleph_1}$ implies that any uncountable, strongly surjective linear order is Aronszajn.
\end{cor}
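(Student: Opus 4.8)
The plan is to prove a reduction and then apply Lemma \ref{lm:ineq}. Since every strongly surjective linear order is short, and an uncountable short linear order is either an Aronszajn type or contains an uncountable real order type, it suffices to show that an uncountable strongly surjective $L$ cannot contain an uncountable real order type once $\mf c<2^{\aleph_1}$. So I would argue by contradiction: assume some uncountable $R\subseteq L$ order-embeds into $\mb R$, and identify $R$ with a subset of $\mb R$. The target is to produce a set $K\subseteq R$ together with a \emph{countable} dense $D\subseteq K$ for which $|K\setm D|\geq\aleph_1$; then Lemma \ref{lm:ineq} gives $2^{\aleph_1}\leq 2^{|K\setm D|}\leq 2^{|D|}=2^{\aleph_0}=\mf c$, contradicting $\mf c<2^{\aleph_1}$.

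The hard part will be that the density appearing in Lemma \ref{lm:ineq} is strict order density (for $x<y$ there is $d\in D$ with $x<d<y$), which forces $K$ to be densely ordered; a typical uncountable subset of $\mb R$ has many jumps and carries no dense $D$ whatsoever. Thus the core of the argument is extracting an \emph{uncountable densely ordered} $K\subseteq R$, and I would do this via condensation points. Let $C$ be the set of condensation points of $R$. A standard second-countability argument shows $R\setm C$ is countable, so $R':=R\cap C$ is uncountable and every nonempty relatively open piece of $R'$ is uncountable. Hence $R'$ has at most countably many jumps (each jump encloses a distinct rational), and discarding the countable set $N$ of jump endpoints leaves an uncountable $K:=R'\setm N$. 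I would then check $K$ is densely ordered: if $x<y$ in $K$ had no element of $K$ strictly between, then $(x,y)\cap R'$ would be a relatively open subset of $R'$ sitting inside the countable set $N$, hence empty, making $\{x,y\}$ a jump of $R'$ and contradicting $x,y\notin N$.

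Once $K$ is available the remainder is routine. Since $K\subseteq\mb R$ is separable, I would take a countable topologically dense $D\subseteq K$; because $K$ has no jumps, each interval $(x,y)_K$ is a nonempty relatively open set and must meet $D$, so $D$ is order dense in $K$ in the required strict sense. As $K$ is uncountable and $D$ countable, $|K\setm D|=|K|\geq\aleph_1$, whence $2^{|K\setm D|}\geq 2^{\aleph_1}$. Applying Lemma \ref{lm:ineq} to $D\subseteq K\subseteq L$ then yields $2^{\aleph_1}\leq 2^{|K\setm D|}\leq 2^{|D|}=\mf c$, the desired contradiction. Therefore no uncountable real order type embeds in $L$, and, being short, $L$ is Aronszajn.
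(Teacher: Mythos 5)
Your proof is correct and takes essentially the same route the paper intends: the paper derives this corollary directly from Lemma \ref{lm:ineq} (declaring that it ``immediately follows''), via shortness, the real/Aronszajn dichotomy, and the observation that an uncountable real order type yields an uncountable $K$ with a countable order-dense $D$, giving $2^{\aleph_1}\leq 2^{|K\setminus D|}\leq 2^{|D|}=\mathfrak{c}$. Your condensation-point argument extracting a densely ordered uncountable $K$ (so that order density, not just topological density, is available) simply fills in, correctly, the detail the paper leaves implicit.
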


P. Schlicht independently proved the above corollary using the stronger assumption of $\diamondsuit$ (personal communication).

\begin{cor}\label{cor:small}
 $\mf c=\aleph_2=2^{\aleph_1}$ implies that any uncountable, strongly surjective linear order has size $<\mf c$.
\end{cor}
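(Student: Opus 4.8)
The plan is to argue by contradiction: assume $L$ is strongly surjective with $|L|=\mf c=\aleph_2$. This is the extremal case, since strong surjectivity forces $L$ to be short and hence $|L|\le \mf c$, so it remains only to exclude $|L|=\aleph_2$. The point of the hypothesis $\mf c=\aleph_2=2^{\aleph_1}$ is the strict inequality $2^{\aleph_1}=\aleph_2<2^{\aleph_2}$. Consequently it suffices to produce \emph{some} suborder $K\subseteq L$ with $|K|=\aleph_2$ that carries a dense subset $D$ with $|D|\le\aleph_1$: then $|K\setm D|=\aleph_2$ and Lemma \ref{lm:ineq} gives $2^{\aleph_2}\le 2^{|D|}\le 2^{\aleph_1}=\aleph_2$, which is absurd. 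Thus the whole corollary reduces to the purely structural claim that a short linear order of size $\aleph_2$ always contains a suborder of full size $\aleph_2$ admitting a dense subset of size $\le\aleph_1$.

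My first move is to reduce this structural claim to a statement about pairwise disjoint intervals. I would show that if a suborder $K$ has no family of $\aleph_2$ pairwise disjoint nonempty open intervals, then $K$ has a dense subset of size $\le\aleph_1$. This is done by the usual interval-bisection recursion of length $\omg$: at each successor stage pick one point of $K$ inside every current ``cell'' (maximal open interval missing the points chosen so far) that still has at least two points, taking unions at limits. At every stage the cells form a family of pairwise disjoint intervals, hence number at most $\aleph_1$ by assumption, so each stage adds $\le\aleph_1$ points and the whole process contributes $\le \aleph_1\cdot\omg=\aleph_1$ points. Shortness guarantees termination by stage $\omg$: a cell surviving all countable stages would give a decreasing $\omg$-chain of intervals whose successively inserted split points form an increasing or decreasing sequence of length $\omg$ (pick the type occurring uncountably often), contradicting that $L$ has no monotone $\omg$-sequence. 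Hence the chosen points are dense and number $\le\aleph_1$. So it now suffices to find a suborder of size $\aleph_2$ that omits any family of $\aleph_2$ pairwise disjoint intervals.

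For this last step I would run a ``fat interval'' dichotomy, and this is where I expect the real difficulty. Call an interval \emph{fat} if it has size $\aleph_2$. Shortness prevents fatness from concentrating at a single cut: such a cut is approached from each side by a countable sequence, and a countable union of non-fat intervals is non-fat, so every fat interval splits at some cut into two disjoint fat pieces. Iterating along $2^{<\oo}$ produces fat intervals $I_s$ with disjoint fat children $I_{s0}<I_{s1}$. Following branches, either $\aleph_2$ of the branch-intersections are nonempty — in which case choosing one survivor per branch embeds a subset of the Cantor set, giving a real suborder of size $\mf c$, which is excluded by Corollary \ref{cor:noc} (and in any case such a real suborder is itself a valid $K$, being separable) — or the surviving mass concentrates, forcing a proper fat sub-interval reached after genuinely moving an endpoint, into which one recurses. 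The main obstacle is the bookkeeping that makes this dichotomy terminate: one must verify that the ``concentrating'' alternative cannot persist, since iterating it $\omg$ times would accumulate a monotone $\omg$-sequence of split points, contradicting shortness, while guaranteeing fatness is preserved at the limit stages of this outer recursion. The clean way to organize this is a Cantor–Bendixson-type rank on the tree of fat intervals, whose height is countable precisely because no monotone $\omg$-sequence exists; at the terminal rank the ``always splitting'' behaviour yields the continuum-sized real suborder. Once the structural claim is secured in either form, Lemma \ref{lm:ineq} applied to the resulting $K$ delivers $2^{\aleph_2}\le 2^{\aleph_1}=\aleph_2$ and completes the proof.
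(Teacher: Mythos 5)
Your reduction to Lemma \ref{lm:ineq} is exactly the paper's, and your first structural step is sound: the length-$\omg$ bisection showing that a suborder with no $\aleph_2$-many pairwise disjoint intervals has density $\le\aleph_1$ is correct (it is precisely the partition-tree mechanism the paper uses; note it needs shortness in both directions, which holds since $-L$ is strongly surjective along with $L$). The gap is in the last step, the ``fat interval'' dichotomy, exactly where you flag ``the real difficulty''. Your termination argument only rules out an outer recursion of length $\omg$: if it ran $\omg$ steps, the discarded fat pieces would form a monotone $\omg$-sequence. But the recursion can instead die inconclusively at a \emph{countable limit stage}: each successor step passes to a fat convex subset after discarding a fat piece on one side, and the intersection of such a decreasing $\oo$-chain need not be fat --- it can perfectly well be empty. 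Shortness says nothing there, so neither horn of your dichotomy is reached. The proposed repair, a ``Cantor--Bendixson-type rank \dots of countable height'', is not available as described: the splitting relation you would rank has infinite descending chains (split and always recurse into the left child), so it is not well-founded and carries no ordinal rank at all; ``no descending $\omg$-chains'' is strictly weaker than well-foundedness and does not produce a rank of countable height. So the termination of the dichotomy --- the heart of your step --- is unproved.

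The repair, and what the paper actually does, is to never recurse into the big family at all. Run your bisection on $L$ itself. If every stage has at most $\aleph_1$ active cells, then by your own argument $L$ has density $\le\aleph_1$ and Lemma \ref{lm:ineq} finishes. Otherwise let $\beta<\omg$ be the least stage at which $\aleph_2$-many cells appear; since each cell splits into at most two cells at a successor step, $\beta$ is a limit, so any two distinct cells at stage $\beta$ are already separated by one of the $\le\aleph_1$ points chosen before stage $\beta$. Hence one point from each stage-$\beta$ cell, together with those earlier chosen points, is a suborder $K$ of size $\aleph_2$ with density $\le\aleph_1$, and Lemma \ref{lm:ineq} applies directly. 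In other words, when $\aleph_2$-many disjoint convex sets first appear they are themselves the witness, their separation being controlled by the levels \emph{below} them; no recursion into fat pieces and no appeal to Corollary \ref{cor:noc} is needed. (Your intended route --- a real suborder of size $\mf c$ killed by Corollary \ref{cor:noc} --- can in fact be completed under $\mf c=\aleph_2$, but by a different mechanism: as in the Proposition in the paper's final section, one uses $\cf([\aleph_1]^\oo,\subseteq)=\aleph_1$ to pigeonhole the $\aleph_2$-many limit cells onto countably many nodes of the small levels, and branches through a countable set embed into $\mb R$. That counting step has no analogue in your sketch and cannot be replaced by shortness alone.)
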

\begin{proof}
 Suppose that $|L|=\mf c$ and $L$ is strongly surjective. Build a 2-branching partition tree $T$ for $L$ (see \cite[page 248]{stevotrees} and the discussion there for details on partition trees). The height of $T$ is at most $\omg$ and so there is a minimal $\beta<\omg$ such that $|T_\beta|=\aleph_2$ since $|T|=\aleph_2$. Pick one point from each convex set in $T_\beta$ to find a $K\subseteq L$ of size $\mf c=\aleph_2$ which has density $\leq |\bigcup_{\alpha<\beta}T_\alpha|\leq \aleph_1$. Now, $2^\mf c\leq 2^{\aleph_1}$ should hold by Lemma \ref{lm:ineq}     however this is clearly not possible since $2^{\aleph_1}=\mf c$.
\end{proof}

Next, let us show that  $\mf c=2^{\aleph_1}$ is also consistent with the statement that ``any uncountable, strongly surjective linear order is Aronszajn''. Let $\mb C_\kappa$ be the forcing adding $\kappa$-many Cohen reals i.e. $\mb C_\kappa=Fn(\kappa,2)$ the set of finite partial functions from $\kappa$ to 2.

  \begin{theorem} Suppose  GCH holds in $V$. Then $V^{\mb C_{\aleph_2}}$ is a model of   $\mf c=\aleph_2=2^{\aleph_1}$ and every uncountable, strongly surjective linear order is Aronszajn.
  \end{theorem}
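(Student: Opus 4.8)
The plan is to combine the counting bound of Corollary~\ref{cor:small} with a weak-diamond construction that disposes of the one remaining possibility, an $\aleph_1$-sized real suborder. First I would settle the cardinal arithmetic. Since $V\models$ GCH and $\mb C_{\aleph_2}=\Fn(\aleph_2,2)$ is ccc of size $\aleph_2$, cardinals and cofinalities are preserved, and a nice-name count gives $2^{\aleph_0}=\aleph_2^{\aleph_0}=\aleph_2$ and $2^{\aleph_1}\le(\aleph_2^{\aleph_0})^{\aleph_1}=\aleph_2^{\aleph_1}=\aleph_2$ in the extension; as $\aleph_2$ Cohen reals are added, both values equal $\aleph_2$, so $\mf c=\aleph_2=2^{\aleph_1}$. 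Now let $L$ be uncountable and strongly surjective in $V^{\mb C_{\aleph_2}}$. By Corollary~\ref{cor:small} we get $|L|<\mf c$, hence $|L|=\aleph_1$. Strongly surjective orders are short, so $L$ is short; if $L$ were not Aronszajn it would contain an uncountable real order type, which by Corollary~\ref{cor:noc} has size $<\mf c$, i.e.\ exactly $\aleph_1$. Thinning in the usual way, I may assume $L$ contains an $\aleph_1$-dense (in itself) real suborder $X=\{x_\alpha:\alpha<\omg\}$, and the theorem reduces to deriving a contradiction from this configuration.

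The heart of the argument is to build a single set $Y\subseteq X$ for which no order preserving surjection $L\onto Y$ exists; since $Y\subseteq L$, this contradicts strong surjectivity. Recall that a monotone surjection $f:L\onto Y$ is nothing but a partition of $L$ into nonempty convex pieces indexed in the order type of $Y$, equivalently the increasing family of Dedekind cuts $\bigl(f^{-1}(\{z:z\le y\})\bigr)_{y\in Y}$ of $L$. Fixing a continuous filtration $L=\bigcup_{\alpha<\omg}L_\alpha$ into countable pieces with $X\cap L_\alpha=\{x_\beta:\beta<\alpha\}$ on a club, each restriction $f\uhr L_\alpha$ is coded by a real and, on a club of $\alpha$, exposes the cuts attached to $Y\cap L_\alpha$. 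I would run a recursion deciding $x_\alpha\in Y$ using, at stage $\alpha$, a guess supplied by a parametrized weak diamond for the cut that a candidate $f$ assigns at the position of $x_\alpha$. The guess is then used to demand of $Y$ a new convex piece at $x_\alpha$ which the fixed order $L$ cannot supply, so that no monotone map with the guessed local behaviour can have $x_\alpha$ in its range. Since weak-diamond guessing only needs to be correct at one stage per map, a single $Y$ simultaneously defeats every candidate surjection.

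The two points carrying the real weight are, first, designing the Borel invariant so that a correct prediction \emph{provably} obstructs surjectivity while only ever altering $Y$ on the real part $X$ (monotonicity of $f$ together with density of $X$ are what make a demanded-but-empty piece fatal), and second, verifying that the required parametrized $\diamondsuit$ holds in $V^{\mb C_{\aleph_2}}$. The latter is where the Cohen reals are essential: the classical weak diamond $2^{\aleph_0}<2^{\aleph_1}$ \emph{fails} here, so instead I would exploit the duality whereby any fixed $f\in2^{\omg}$ is captured by an intermediate extension using only $\aleph_1$ of the coordinates, leaving fresh Cohen generics on the remaining coordinates to match the countable value of the predictor on $f\uhr L_\alpha$ along a stationary set. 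Establishing that this parametrized diamond both holds after adding Cohen reals (and, by the same template, after the Sacks or Miller iterations) and genuinely suffices to kill $\aleph_1$-dense strongly surjective real orders is, I expect, the main obstacle.
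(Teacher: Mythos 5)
Your cardinal arithmetic and the opening reduction (via Corollary \ref{cor:small}, $|L|=\aleph_1$, and the passage to an $\aleph_1$-dense real suborder) are correct and agree with the paper. The genuine gap is at the heart of your argument: everything rests on the claim that a parametrized weak diamond in the style of $\diamondsuit^{\omg}(2,=)$ holds in $V^{\mb C_{\aleph_2}}$, and this claim is neither proved by you nor available as a black box. The Devlin--Shelah weak diamond is \emph{equivalent} to $2^{\aleph_0}<2^{\aleph_1}$, so it fails here, as you note; but the only general theorem supplying the definable substitute --- the Guzman--Hru\v s\'ak result quoted in the paper --- concerns length-$\aleph_2$ \emph{countable support iterations} of a single Suslin-definable, proper, homogeneous poset, and does not apply to the ccc finite-support poset $\mb C_{\aleph_2}=\Fn(\aleph_2,2)$. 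Your sketched ``duality'' argument does not close this: in $\diamondsuit^{\omg}(2,=)$ the predictor $F\uhr \delta^\delta$ may use any $\omg$-sized set of ordinals and all reals of the \emph{final} model as parameters, so its value at $f\uhr L_\alpha$ can depend on precisely those Cohen coordinates you intend to keep ``fresh'', and the matching-on-a-stationary-set argument collapses. This is not a cosmetic omission: the paper deliberately uses guessing principles only for the countable support iteration models (Sacks, Miller), and treats the Cohen model by an entirely different method.

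That method is a direct mutual-genericity argument, worth contrasting because it avoids your obstacle completely. By Corollary \ref{cor:small} the order $L$ has size $\aleph_1$, hence appears in an intermediate extension; factoring $\mb C_{\aleph_2}=\mb C_{\aleph_2}*\mb C_{\aleph_1}$, one may assume $L=(\omg,\tri)\in V$ with $\oo$ dense in an uncountable $L_0\subseteq L$, and force with $\mb C_{\aleph_1}$ over $V$. The tail generic yields $g:L_0\to 2$, and one sets $K=\oo\cup g^{-1}(1)$. If $f:L\onto K$ existed in $V[G]$, then by ccc-ness countably much of $f$ --- a choice of $\xi_\ell\in f^{-1}(\ell)$ for $\ell\in\oo$ --- lies in some $V[G\uhr\nu]$ with $\nu$ countable; since the sets $O_\alpha=\{\ell\in\oo:\ell\tri\alpha\}$ lie in $V$, whether some $\xi\in\omg$ satisfies $A_\alpha\tri\xi\tri B_\alpha$ is already decided in $V[G\uhr\nu]$, and by density of $\oo$ in $L_0$ this decides $g(\alpha)$ for every $\alpha$. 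Thus the generic $g$ would be definable in $V[G\uhr\nu]$, a contradiction. In other words, the ``one correct guess per candidate map'' that you wanted from a diamond is obtained for free from ccc capturing plus genericity of $K$; no guessing principle is needed. To turn your proposal into a proof you would first have to establish $\diamondsuit^{\omg}(2,=)$ (or a workable variant) after adding $\aleph_2$ Cohen reals in finite support, which is a substantial open-ended task rather than a verification.
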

	  \begin{proof}
	   
	      It is well known that $V^{\mb C_{\aleph_2}}\models \mf c=\aleph_2=2^{\aleph_1}$. So, our goal is to show that a strongly surjective linear order contains no uncountable separable suborders. 
	      

	   First, any strongly surjective  linear order $L$ in $V^{\mb C_{\aleph_2}}$ has size $<\aleph_2$ by Corollary \ref{cor:small} and in turn, $L$ appears in an intermediate model. Since $\mb C_{\aleph_2}=\mb C_{\aleph_2}*\mb C_{\aleph_1}$, it suffices to prove the following.
   
    \begin{tlemma}
     Suppose that $L=(\omega_1,\tri)$ is a linear order from the ground model $V$ so that $\oo$ is dense in some uncountable $L_0\subseteq L$. Then $L$ is not strongly surjective in $V^{\mb C_{\aleph_1}}$.
    \end{tlemma}
	   
	      \begin{proof}
	   Let $G\subseteq \mb C_{\aleph_1}$ be a $V$-generic filter; this gives a generic map $g:L_0 \to 2$ by $g=\cup G\uhr L_0$. Now, in $V[G]$, we define $$ K=\oo\cup g^{-1}(1)\subseteq L_0.$$ We will show that $L$ cannot be mapped onto $K$. To this end, suppose that $f:L\onto K$ is an order preserving surjection in $V[G]$. Find an appropriate countable $\nu\in \omg$ so that $V[G\uhr \nu]\models f(\xi_\ell)=\ell$ for an appropriate sequence  $(\xi_\ell)_{\ell\in \oo}\in \omg^\oo\cap V[G\uhr \nu] $. Now, note that $O_\alpha=\{\ell\in \oo: \ell\tri \alpha\}\in V$ for all $\alpha\in \omg$ since $L\in V$; hence $$A_\alpha=\{\xi_\ell:\ell\in O_\alpha\}, B_\alpha=\{\xi_\ell:\ell\in \oo\setm O_\alpha\}\in V[G\uhr \nu].$$

In particular, for  any $\alpha\in \omg$, it is decided in $V[G\uhr \nu]$ whether there is some $\xi\in \omg$ such that $A_\alpha\tri \xi\tri B_\alpha$.

\begin{claim} For any $\alpha\in\omg\setm \omega$, $g(\alpha)=1$ iff there is some $\xi\in \omg$ such that $A_\alpha\tri \xi\tri B_\alpha$.
 \end{claim}
 \begin{proof}
 If  $g(\alpha)=1$ then $\alpha\in K$ and hence any $\xi\in f^{-1}(\alpha)$ has to satisfy $A_\alpha\tri \xi\tri B_\alpha$. On the other hand, if $A_\alpha\tri \xi\tri B_\alpha$ for some $\xi\in \omg$ then $\alpha'=f(\xi)\in L_0$ must satisfy $O_\alpha\tri \alpha' \tri \oo\setm O_\alpha$. Since $\oo$ is dense in $L_0$, this clearly implies that $\alpha=\alpha'\in K$ and so $g(\alpha)=1$.
	   
	  \end{proof}   Note that the claim implies that $g$ can be defined in $V[G\uhr \nu]$ which is clearly not possible. This contradiction finishes the proof of the lemma.

	  \end{proof}

In turn, we proved the theorem.

   \end{proof}

  Now, a natural question is whether other classical models (Sacks, Miller, etc.) allow non Aronszajn strongly surjective linear orders. We claim that these models behave as the Cohen-model i.e. if there is a strongly surjective linear order, it has to be Aronszajn or countable. To prove this result, we employ the technique of parametrized weak diamonds \cite{DHM,osvaldo}.
  
  \begin{dfn}
   Let $\diamondsuit^{\omg}(2,=)$ denote the following statement: if $X$ is an $\omg$ set of ordinals and $F:\bigcup_{\delta<\omg} \delta^\delta\to 2$ so that $F\uhr \delta^\delta\in L(\mathbb R)[X]$ for all $\delta<\omg$, then there is a $g:\omg\to 2$ so that for all $f:\omg\to \omg$ the set $$\{\delta<\omg:f\uhr \delta\in \delta^\delta \textmd{ and } F(f\uhr \delta)=g(\delta)\}$$ is stationary.
  \end{dfn}
  
  Recall that $L(\mathbb R)$ is the class of sets constructible from $\mb R$ (in the sense of G\"odel) and $L(\mathbb R)[X]$ is the minimal model extending $L(\mathbb R)$ which contains $X$. See \cite[Chapter 13]{jech} for more details on constructibility.

  In Corollary \ref{cor:onlyA}, we saw that $\mf c<2^{\aleph_1}$ was used to deduce that all strongly surjective linear orders are Aronszajn. So, it is not a surprise that we turn to use weak diamonds: a celebrated result of K. Devlin and S. Shelah \cite{devlin} states that $\mf c<2^{\aleph_1}$ is equivalent to the above weak diamond statement if one drops the requirement of $F\uhr \delta^\delta\in L(\mathbb R)[X]$ i.e. that we only guess constructible functions $F$. However, as shown by O. Guzman and M. Hru\v s\'ak \cite{osvaldo},  $\diamondsuit^{\omg}(2,=)$ suffices to prove many classical consequences of $\mf c<2^{\aleph_1}$  e.g. the failure of $\textmd{BA}_{\aleph_1}$.
  
  Now, the advantage of $\diamondsuit^{\omg}(2,=)$ over $\mf c<2^{\aleph_1}$ is clear from the following theorem.
  
  \begin{theorem}[\cite{osvaldo}]
   $\diamondsuit^{\omg}(2,=)$ holds in all models resulting from a length $\aleph_2$ countable support iteration of a single Suslin-definable poset $\mb P$ which is homogeneous (i.e. $\mb P\equiv \{0,1\}\times \mb P$) and proper. 
  \end{theorem}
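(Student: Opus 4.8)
The plan is to reconstruct the argument of Guzman and Hru\v s\'ak by combining a reflection step with a genericity argument that exploits the homogeneity of $\mb P$. Work in $V[G]$ where $G$ is generic for the length $\aleph_2$ countable support iteration $\langle \mb P_\alpha,\dot{\mb Q}_\alpha:\alpha<\aleph_2\rangle$ with each $\dot{\mb Q}_\alpha$ a copy of $\mb P$. Since GCH holds in $V$ and $\mb P$ is proper of size $\leq\mf c$ at each stage, the iteration is proper and $\aleph_2$-cc; consequently every $\aleph_1$-sized set of ordinals of $V[G]$ (in particular every $\aleph_1$-sequence of reals) already lies in some intermediate extension $V[G_\beta]$ with $\beta<\aleph_2$, because a nice name for such an object has supports bounded below $\aleph_2$.

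First I would use this to capture the instance. Given $X$ and $F$ as in the statement, the constraint $F\uhr\delta^\delta\in L(\mb R)[X]$ means that each level $F\uhr\delta^\delta$ is definable from $X$ and a single real parameter $r_\delta$; the sequence $\langle r_\delta:\delta<\omg\rangle$ together with $X$ is an $\aleph_1$-sized object, hence appears in some $V[G_\beta]$, $\beta<\aleph_2$. Since $r_\delta\in V[G_\beta]$, reinterpreting $L(\mb R)$ inside $V[G_\beta]$ recovers the whole function $F$ (and $X$) in $V[G_\beta]$. Writing $V[G]=V[G_\beta][H]$, the tail $H$ is generic for a countable support iteration of $\mb P$ of length $\cong\aleph_2$ which is again proper, Suslin-definable and homogeneous, and it suffices to construct the guessing sequence $g$ in $V[G_\beta][H]$ over the model $W:=V[G_\beta]$ in which $X,F$ live.

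Next I would define $g$ from the tail generic via the homogeneity $\mb P\equiv\{0,1\}\times\mb P$: this yields, at each stage, a distinguished bit of the generic that behaves like an independent coin flip over the model built so far. Fixing a suitable continuous increasing assignment of stages $\delta\mapsto\gamma_\delta$, set $g(\delta)$ to be the homogeneity bit read off at stage $\gamma_\delta$. The heart of the proof is then the claim that for every $f:\omg\to\omg$ in $V[G_\beta][H]$ the set $\{\delta:f\uhr\delta\in\delta^\delta\text{ and }F(f\uhr\delta)=g(\delta)\}$ is stationary, which I would establish by a density argument. Given a name $\dot C$ for a club and a condition $p$, run a properness/fusion construction producing a countable elementary submodel $M\prec H_\theta$ containing $\dot f,\dot C,X$ and the names for $F$, with $\delta:=M\cap\omg\in\dot C$, together with an $M$-generic extension of $p$. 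Suslin-definability and absoluteness of the forcing relation guarantee that $f\uhr\delta$ and the real $r_\delta$ are added by the fragment of the generic captured by $M$, so that the value $F(f\uhr\delta)\in\{0,1\}$ is already decided by that initial fragment; homogeneity then lets me freely choose the coin-flip bit at stage $\gamma_\delta$ to equal this value, producing an extension forcing $\delta\in\dot C$ and $F(f\uhr\delta)=g(\delta)$.

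The main obstacle is precisely this coordination step: I must arrange that the bit defining $g(\delta)$ lies \emph{above} the fragment of the generic that computes both $f\uhr\delta$ and the value $F(f\uhr\delta)$, yet remains a genuinely free coin flip over that fragment. Ensuring that $F(f\uhr\delta)$ is ``decided early'' requires that each level $F\uhr\delta^\delta$, which a priori could mention reals appearing cofinally in the iteration, be captured by the countable model $M$ — this is exactly where the $L(\mb R)[X]$ definability hypothesis is used, reducing $F(f\uhr\delta)$ to a computation from the single real $r_\delta$ and $X$, both of which $M$ sees. Meanwhile ``still free'' is delivered by homogeneity of $\mb P$ together with properness, which keep the designated bit independent of $M$'s decisions. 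Once the claim is established the theorem follows at once, since $g$ then guesses correctly on a stationary set against every $f$.
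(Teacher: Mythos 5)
Your scaffolding (using the $\aleph_2$-cc to capture $\aleph_1$-sized parameters in an intermediate model, reading $g$ off homogeneity bits at designated stages, and a properness argument with a countable $M\prec H_\theta$, $\delta=M\cap\omg$) is the right skeleton, but the central step fails, and it fails twice in the same way. First, the claim that ``reinterpreting $L(\mb R)$ inside $V[G_\beta]$ recovers the whole function $F$ in $V[G_\beta]$'' is false: in the only interesting case, $\mb P$ adds reals (otherwise CH holds in $V[G]$ and $\diamondsuit^{\omg}(2,=)$ is immediate), so $\delta^\delta\cap V[G]$ contains sequences not in $V[G_\beta]$ and $F\uhr\delta^\delta$ is not even a set in $V[G_\beta]$; moreover, without large cardinals there is no absoluteness between $L(\mb R^{V[G_\beta]})[X]$ and $L(\mb R^{V[G]})[X]$, so the same formula with the same parameters need not compute the same values. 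Second, and for the same reason, the assertion that ``the value $F(f\uhr\delta)\in\{0,1\}$ is already decided by that initial fragment'' of the generic captured by $M$ is false: an $L(\mb R)[X]$-definable level is evaluated in the inner model $L(\mb R^{V[G]})[X]$ of the \emph{final} extension, so its values depend on reals added at all $\aleph_2$ stages, not just on $r_\delta$, $X$ and $f\uhr\delta$. Nothing early decides $F(f\uhr\delta)$, so the plan ``decide the value first, then set the free bit equal to it'' cannot be executed as stated. (A lesser issue: writing $V[G]=V[G_\beta][H]$ with $H$ generic for a countable support iteration of $\mb P$ of length $\aleph_2$ over $V[G_\beta]$ is not automatic, since quotients of countable support iterations are not again such iterations; the standard argument avoids this reduction by working with names over $V$.)

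What actually makes the theorem true is invariance, not locality. Fix $\mb P_\beta$-names for $X$ and $\langle r_\delta:\delta<\omg\rangle$, let $\dot F$ be the canonical name ``the function whose $\delta$-th level is defined in $L(\mb R)[\dot X]$ from $\dot r_\delta$'', and let $g(\delta)$ be the bit read off the generic at stage $\beta+\delta$. If $p$ forces a counterexample $\dot f,\dot C$, take $M$ containing all of this and an $M$-generic $q\le p$ with $\supp(q)\subseteq M\cap \oo_2$. Since $\beta\in M$ while $\delta=M\cap\omg\notin M$, the coordinate $\beta+\delta$ lies outside $M$, so $q$ and every condition in $M$ are trivial there; hence the automorphism $\pi$ flipping that single bit fixes $q$ and $\mb P_{\oo_2}\cap M$ pointwise. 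For generic $G\ni q$ one has $V[\pi(G)]=V[G]$ (flipping a bit does not change the model), so $\mb R^{V[G]}$, $L(\mb R)[X]$, and therefore every level of $F$ are the same whether computed from $G$ or from $\pi(G)$; and since $f\uhr\delta$ and the statement ``$\delta\in C$'' are determined by $G\cap M=\pi(G)\cap M$, they are flip-invariant as well. Evaluating the counterexample at $G$ and at $\pi(G)$ then forces $F(f\uhr\delta)$ to differ from both possible values of the bit at stage $\beta+\delta$, a contradiction. So the correct statement is not that $F(f\uhr\delta)$ is decided by the fragment $M$ sees, but that it is invariant under this one flip --- an argument resting on the facts that automorphisms preserve the extension and that the $L(\mb R)[X]$ hypothesis makes the levels of $F$ definable from flip-invariant data. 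This invariance mechanism, in which homogeneity supplies the flipping automorphism rather than merely a coin to be set, is the idea missing from your write-up.
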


  The classical tree forcings (Sacks, Miller, etc.) satisfy all these requirements. Also, in any such canonical model $\mf c=2^{\aleph_1}=\aleph_2$. In turn, strongly surjective linear orders must have size $\leq \aleph_1$ by Corollary \ref{cor:small}. So, our final aim in this section is to prove the following theorem.
  
  \begin{theorem}
$\diamondsuit^{\omg}(2,=)$ implies that all strongly surjective linear orders of size $\aleph_1$ are Aronszajn.   
  \end{theorem}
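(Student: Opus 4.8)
The plan is to reproduce the Cohen-model lemma with the weak diamond in place of genericity. Suppose toward a contradiction that $L$ is strongly surjective of size $\aleph_1$ but not Aronszajn; being strongly surjective it is short, so it must contain an uncountable separable suborder, i.e. a countable $\oo$ dense in some uncountable $L_0\subseteq L$. I will use $\diamondsuit^{\omg}(2,=)$ to produce a single $K\subseteq L_0$ onto which $L$ cannot be order-preservingly surjected, contradicting strong surjectivity. First I reduce to $L=(\omg,\tri)$, and — this is the device that handles the points outside $L_0$ — I choose the bijection of $L$ with $\omg$ so that $L_0$ contains a club: since $|L\setm L_0|\le\aleph_1$, I send $L\setm L_0$ injectively into the (nonstationary) set of successor ordinals and $L_0$ onto the rest, so every limit ordinal lands in $L_0$. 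Let $X$ be an $\omg$-set of ordinals coding $(\tri,L_0)$, so that $\tri$, the set $L_0$, and the cut map $\delta\mapsto O_\delta=\{\ell\in\oo:\ell\tri\delta\}$ all lie in $L(\mb R)[X]$.

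The crucial observation — which dissolves the apparent obstacle that membership in the range of a monotone surjection is a \emph{global} condition — is that $\diamondsuit^{\omg}(2,=)$ only demands $F\uhr\delta^\delta\in L(\mb R)[X]$, and $X$ already codes the \emph{whole} order $\tri$. Hence $F$ may quantify over all of $\omg$, exactly as the Cohen argument does. I define $F:\bigcup_{\delta<\omg}\delta^\delta\to 2$ on $h\in\delta^\delta$ as follows: set $\xi_\ell=\min\{\xi<\delta:h(\xi)=\ell\}$ for $\ell\in\oo$ (when defined), $A=\{\xi_\ell:\ell\tri\delta\}$, $B=\{\xi_\ell:\delta\tri\ell\}$, and put $F(h)=0$ iff every $\xi_\ell$ is defined and there is some $\xi\in\omg$ with $A\tri\xi\tri B$; otherwise $F(h)=1$. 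Reading off $h$, the cut $O_\delta$, and the global relation $\tri$ from $X$, this is manifestly $F\uhr\delta^\delta\in L(\mb R)[X]$.

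Now let $g:\omg\to 2$ be given by $\diamondsuit^{\omg}(2,=)$ for this $F$, set $K=\oo\cup\{\delta\in L_0\setm\oo:g(\delta)=1\}$, and use strong surjectivity to fix an order preserving $f:\omg\to\omg$ with $\ran(f)=K$. The heart of the matter is to show that on a club $C$ one has $f\uhr\delta\in\delta^\delta$ and $F(f\uhr\delta)=1-g(\delta)$. I take $C$ inside the club of $\tri$-, $f$- and $f^{-1}$-closure points that are limit ordinals above $\nu_0=\sup_{\ell\in\oo}\min f^{-1}(\ell)$; for such $\delta$ we get $f\uhr\delta\in\delta^\delta$, moreover $\delta$ is a limit ordinal, hence $\delta\in L_0$ and $\delta\notin\oo$, and every $\xi_\ell=\min f^{-1}(\ell)$ lies below $\delta$. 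For these $\delta$ I verify the Cohen-style equivalence $\exists\xi\,(A\tri\xi\tri B)\Leftrightarrow\delta\in K$: forward, any $\xi$ in the gap has $f(\xi)\in L_0$ with the same $\oo$-cut as $\delta$, and since $\oo$ is dense in $L_0$ the cut determines the point, forcing $f(\xi)=\delta$; backward, any $\xi\in f^{-1}(\delta)$ (possibly $\ge\delta$, which is harmless precisely because the quantifier is global) satisfies $A\tri\xi\tri B$ by monotonicity. Thus $F(f\uhr\delta)=1-g(\delta)\ne g(\delta)$ throughout $C$, so $\{\delta:F(f\uhr\delta)=g(\delta)\}$ is disjoint from a club and hence nonstationary, contradicting the guessing property of $g$. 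Therefore no order preserving surjection maps $L$ onto $K$, the desired contradiction, and a strongly surjective $L$ of size $\aleph_1$ has no uncountable separable suborder, i.e. it is Aronszajn.

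The step I expect to be the main obstacle is the ``witness below $\delta$'' issue in the backward direction: the preimage $f^{-1}(\delta)$ typically sits \emph{above} $\delta$ (for $\delta$ a closure point one has $\ran(f)\cap\delta=f[\delta]\not\ni\delta$), so a naive localization of the gap predicate to $\xi<\delta$ would read $0$ on a club regardless of $g$ and prove nothing. The resolution is to keep the existential quantifier global, which is legitimate only because the diamond is formulated over $L(\mb R)[X]$ with $X$ coding all of $\tri$; this is exactly what lets the weak-diamond argument recover the Cohen lemma. A secondary point needing care is arranging the discrepancy on a genuine club rather than merely a stationary set, which is why the identification of $L$ with $\omg$ is chosen to place $L_0$ over a club.
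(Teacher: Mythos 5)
Your proof is correct and is essentially the paper's own argument: both adapt the Cohen-model lemma to the weak-diamond setting by defining $F$ on initial segments $f\uhr\delta$ with the \emph{entire} order $\tri$ coded into the parameter $X$, so that the crucial gap condition may quantify over all of $\omg$ (exactly your ``global existential'' point, which the paper flags by noting that condition (c) uses $A,\tri$ as parameters while (a),(b) are Borel), and then read the guessed $g$ as the membership function of the bad suborder. The only differences are bookkeeping: the paper enumerates the separable suborder as $A=\{a_\delta:\delta<\omg\}$ and guesses whether $a_\delta\in B$, pinning $a_\delta$ by a countable family of intervals with endpoints in the dense set $D$, whereas you re-index $L$ so that $L_0$ contains the limit ordinals and guess $\delta$ itself via its cut over $\oo$ — cosmetically different, mathematically the same.
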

 \begin{proof}
  Suppose that $L=(\omg,\tri)$ is a linear order and $A=\{a_\alpha:\alpha<\omg\}\subseteq L$ is an uncountable suborder with a countable set $D$ which is dense in $A$. We aim to find $B\subseteq A$ so that there is no $\tri$-preserving $f:\omg\onto B$.
  
  Let us define a function $F:\bigcup_{\delta<\omg} \delta^\delta\to 2$ using $\tri$ and $A$ as parameters and make sure that $F\uhr \delta^\delta\in L(\mb R)[A,\tri]$ as desired; it is standard to code $A$ and $\tri$ into a single set of ordinals of size $\aleph_1$ so we omit these details.
  
  Fix $\delta<\omg$ and $f:\delta\to \delta$. Also, let $\alpha_n=\alpha^\delta_n\tri \beta_n=\beta_n^\delta\in D$ so that $$\{a_\delta\}=A\cap \bigcap_{n\in \oo}[\alpha_n,\beta_n]_{\tri}.$$ We let $F(f)=1$ iff
  \begin{enumerate}[(a)]
   \item $D\subseteq \ran (f)$, 
   \item $f$ is $\tri$-preserving and
   \item the set $$\bigcap_{n\in \oo}\{[\xi,\zeta]_{\tri}:\xi\in f^{-1}(\alpha_n), \zeta\in f^{-1}(\beta_n)\}$$ is empty.
  \end{enumerate} In any other case, we let $F(f)=0$.   Note that (a) and (b) are clearly Borel conditions but we do use $A,\tri$ as parameters in condition (c).
  
  Now,    $\diamondsuit^{\omg}(2,=)$ hands us some function $g:\omg\to 2$. We use $g$ to define a subset of $A$ as follows:
  
  $$B=D\cup \{a_\delta:\delta\in \omg \textmd{ and } g(\delta)=1\}.$$
  
  We claim that there is no order preserving map $f:\omg \onto B$. Otherwise, the set $$S=\{\delta<\omg:f\uhr \delta\in \delta^\delta \textmd{ and } F(f\uhr \delta)=g(\delta)\}$$ is stationary. Pick a large enough $\delta\in S\setm \oo$ so that $D\subseteq \ran(f\uhr \delta)$.
  
  Now, there are two possible cases: first, suppose that $g(\delta)=1$. Then $a_\delta\in B$ so there is some $\nu\in f^{-1}(a_\delta)$. Note that $\alpha_n\tri a_\delta\tri \beta_n$ implies that $\xi\tri \nu\tri \zeta$ for all $\xi\in f^{-1}(\alpha_n), \zeta\in f^{-1}(\beta_n)$ since $f$ is $\tri$-preserving. So $\nu\in  \bigcap_{n\in \oo}\{[\xi,\zeta]_{\tri}:\xi\in f^{-1}(\alpha_n), \zeta\in f^{-1}(\beta_n)\}$ which means that $F(f\uhr \delta)=0$. However, $\delta\in S$ implies that $0=F(f\uhr \delta)=g(\delta)=1$ a contradiction.
  
  Second, suppose that $g(\delta)=0=F(f\uhr \delta)$. In particular $a_\delta\notin B$. However, $0=F(f\uhr \delta)$ implies that $$\bigcap_{n\in \oo}\{[\xi,\zeta]_{\tri}:\xi\in f^{-1}(\alpha_n), \zeta\in f^{-1}(\beta_n)\}\neq \emptyset$$ since conditions (a) and (b) are satisfied. Pick any $\nu \in \bigcap_{n\in \oo}\{[\xi,\zeta]_{\tri}:\xi\in f^{-1}(\alpha_n), \zeta\in f^{-1}(\beta_n)\}$ and look at the image $f(\nu)\in B$. We must have $\alpha_n\tri f(\nu)\tri \beta_n$ for all $n\in \oo$ however the only element of $A$ which can satisfy this is $a_\delta$ (by the choice of $\alpha_n,\beta_n$). So $f(\nu)=a_\delta\in B$, a contradiction again.
  

 \end{proof}

  It is not clear at this point if there are any uncountable strongly surjective linear orders in the Cohen-or the above canonical models.



\section{A minimal, strongly surjective Suslin tree}\label{sec:diamond}

As the title suggests, our next goal is to construct a lexicographically ordered Suslin tree which is strongly surjective and minimal. A construction for a minimal Suslin tree was presented first by J. Baumgartner \cite{baum}. However, recently Hossein Lamei Ramandi pointed out that Baumgartner’s crucial \cite[Lemma 4.14]{baum} is unrepairably flawed. We hope to present a correct and complete proof now.

We will start by a few necessary definitions: if $T$ is a tree then let $s^\downarrow$ denote $\{t\in T:t\leq s\}$ and $s^{\uparrow}=\{t\in T:t>s\}$. If $\xi<\Ht(s)$ then let $s\uhr \xi$ denote the unique $t< s$ with height $\xi$.

A tree $T$ is \emph{$\omega$-branching} if $\textmd{br}(t):=\{s\in T: s^\downarrow\setm\{s\}=t^\downarrow\setm \{t\}\}$ is countably infinite
 for all $t\in T_\alpha$. Note that {$\omega$-branching} trees branch at limit levels as well i.e. $s\uhr \zeta=t\uhr \zeta$ for all $\zeta<\xi$ does not imply that $s\uhr \xi=t\uhr \xi$.

 Now, suppose that apart from the tree order there is a linear order $\tri$ on each set of the form $\textmd{br}(t)$ so that $$(\textmd{br}(t),\tri)\equiv \mb Q$$ for all $t\in T$. Then we say that $T$ is \emph{doubly ordered}.

Next, given a doubly ordered $T$, we can define the corresponding lexicographic order $<_{\lex}$ which linearly orders $T$. That is, let $s<_{\lex} t$ iff $s<t$ in the tree order or $s\uhr \xi\tri t\uhr \xi$ where $\xi$ is minimal so that $s\uhr \xi\neq t\uhr \xi$


A \emph{double isomorphism} between two doubly ordered trees is a bijection which preserves both the tree and lexicographic order.

We say that $Y\subseteq T$ is \emph{large} if $Y$ is cofinal in $T[Y]=\{t\in T: t\leq y$ or $y\leq t$ for some $y\in Y \}$; note that a large subset is not necessarily downward closed. 

Finally recall that  $\diamondsuit^+$ is the following statement: there is a sequence $(\mc S_\alpha)_{\alpha\in \omg}$ so that $\mc S_\alpha$ is countable and for any $Y\subseteq \omg$ there is a club subset $C$ of $\omg$ so that $Y\cap \alpha,C\cap \alpha\in \mc S_\alpha$ for all $\alpha\in C$.  $\diamondsuit^+$ is a well known consequence of $V=L$.

We will prove: 

\begin{theorem}\label{suslinthm} Under $\diamondsuit^+$, there is an $\oo$-branching doubly ordered Suslin tree $T$ so that $T$ and $T\uhr Y$ (the order from $T$ restricted to $Y$) are doubly isomorphic whenever $Y\subseteq T$ is large and $\min Y$ is isomorphic to $\mb Q$ .
\end{theorem}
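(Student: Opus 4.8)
The plan is to build the tree $T$ by recursion on levels $\alpha<\omg$, using the $\diamondsuit^+$ sequence to anticipate, level by level, every potential large subset $Y$ together with every partial attempt at a double isomorphism. At each countable limit level I must decide which cofinal branches through $\bigcup_{\alpha<\delta}T_\alpha$ get a node placed above them; this is the one genuine degree of freedom in a tree construction, and it is exactly where both the Suslin property and the self-similarity must be forced. The target isomorphism $T\cong T\uhr Y$ will be assembled as an increasing union of countable approximations, each a double isomorphism between an initial segment of $T$ and a corresponding large subset of $T[Y]$; the recursion will be arranged so that any such approximation living in $\mc S_\delta$ and coherent with the construction can be extended past level $\delta$.

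First I would fix the combinatorial skeleton: $T$ is $\oo$-branching and doubly ordered, so at each node the immediate successors (and, crucially, the branching at limits) are ordered like $\mb Q$. The key structural lemma I expect to need is a \emph{homogeneity} statement: for any node $t$, the cone $t^\uparrow$ with its inherited double order is itself double-isomorphic to all of $T$, and more generally $T\uhr Y\cong T$ for large $Y$ with $\min Y\cong\mb Q$. I would prove the main theorem and this homogeneity simultaneously, so that partial isomorphisms can always be extended by mapping a fresh subtree cone to a fresh large piece of $T[Y]$, using that $\mb Q$ is itself strongly surjective and densely self-embedding to match up the branching orders. The role of $\min Y\cong\mb Q$ is precisely to seed the bottom level with the right order type so that the lexicographic structure of $T[Y]$ at the base matches $T$.

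The $\diamondsuit^+$ bookkeeping is what makes the extension arguments actually close up. Because $\diamondsuit^+$ gives, for each relevant $Y\subseteq T$, a club $C$ with $Y\cap\alpha, C\cap\alpha\in\mc S_\alpha$ on $C$, I can arrange that for club-many $\delta$ the level-$\delta$ construction already knows the restriction $Y\cap(\bigcup_{\alpha<\delta}T_\alpha)$ and the partial isomorphism built so far. At such $\delta$ I extend the approximation to cover all of level $\delta$ and to hit the newly relevant nodes of $T[Y]$, then continue. Taking the union along $C$ yields a total double isomorphism $T\to T\uhr Y$. Simultaneously, sealing antichains against the $\diamondsuit^+$ guesses keeps $T$ Suslin, and ensuring every cone looks alike gives minimality of the associated $<_{\lex}$ order (every suborder contains a copy of a full cone, hence of $T$ itself).

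\textbf{The main obstacle} I anticipate is the limit-level amalgamation: when extending a partial double isomorphism across a limit $\delta$, I must choose which limit branches receive nodes in $T$ so that the image structure $T[Y]$ has \emph{exactly matching} limit branching, while at the same time not adding an uncountable antichain and not creating a cofinal branch (Suslin). These three demands—self-similarity, no branch, no uncountable antichain—pull against each other at each limit, and reconciling them is precisely where Baumgartner's \cite[Lemma 4.14]{baum} broke down. I expect the fix to require carrying a stronger inductive hypothesis than a bare isomorphism: a \emph{coherent system} of approximations closed under the tree and lexicographic structure, robust enough that the limit step is forced rather than chosen, so that the branching of $T$ and of $T[Y]$ are built in lockstep from the start rather than reconciled after the fact.
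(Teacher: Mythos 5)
Your skeleton matches the paper's: build the tree by recursion on levels with $\diamondsuit^+$ bookkeeping, seal antichains at limit levels to get Suslinity, and assemble the isomorphism $T\to T\uhr Y$ as an increasing union of countable approximations along a club. But the proposal stops exactly where the mathematical content lies, and the mechanism you gesture at does not work as stated. You propose that the partial isomorphism approximations should ``live in $\mc S_\delta$'' and be extendable past level $\delta$; the trouble is that the isomorphism is constructed only \emph{after} the tree is complete, so the $\diamondsuit^+$ sequence used to build the tree cannot have guessed it --- a genuine chicken-and-egg problem, not a bookkeeping detail. The paper resolves it with two ingredients absent from your proposal. First, a branch-selection lemma (Lemma \ref{extlemma}, from \cite{hns}): at each limit level $\alpha$, working with a countable elementary submodel $\mc N$, one can choose a countable set $B$ of cofinal branches of $T^\alpha$ that simultaneously (a) preserves the maximality of every maximal antichain lying in $\mc N$ and (b) is \emph{closed under every tree isomorphism $f\in\mc N$ between large subsets of $T^\alpha$}, with $(b,i)\mapsto (f[b],i)$ extending $f$. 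This anticipates not the one isomorphism you will eventually want (which does not yet exist) but \emph{all} isomorphisms lying in the submodel. Second, a canonicity mechanism: the trees, the branch sets, and later the partial isomorphisms $\pi_\nu$ are all chosen $\prec$-minimally so that the entire construction is uniquely recoverable inside the models $N_\gamma$ (condition (iii) in the paper). This is what guarantees that at a limit point $\gamma_\nu$ of the club the union $\pi_{<\nu}$ is an \emph{element} of $N_{\gamma_\nu}$, hence was among the maps anticipated when the branches at level $\gamma_\nu$ were selected; the paper explicitly marks this as ``where the full force of $\diamondsuit^+$ is applied.'' Your phrase ``a coherent system of approximations \dots so that the limit step is forced rather than chosen'' names the right desideratum, but naming it is not producing it, and this is precisely the point at which Baumgartner's \cite[Lemma 4.14]{baum} collapsed.

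There is a second, smaller gap: even granting that the tree isomorphism extends across a limit level, you must still extend it as a \emph{lexicographic} isomorphism. The paper does this via Lemma \ref{extension} together with a concrete observation: for each anticipated branch $b$, the set $R_b$ of points of $Y$ at the new level lying above $\pi_{<\nu}[b]$ is order-isomorphic to $\mb Q$ (this uses largeness of $Y$ and the $\mb Q$-ordered branching), so $\{(b,i):i<\oo\}$ can be mapped onto $R_b$ order-preservingly, and Lemma \ref{extension} then certifies that the resulting map preserves $<_{\lex}$ globally. Your appeal to homogeneity of cones and to $\mb Q$ being ``densely self-embedding'' points in this direction but contains no argument; note that the cone statement is itself an instance of the theorem being proved (a cone is a large set whose minimum level is a copy of $\mb Q$), so invoking it as a ``key structural lemma'' carries no independent force --- all the work still has to happen at the limit-level extension step.
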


At the end of the section (in Corollary \ref{cor:strongsuslin}), we show  that the above tree is strongly surjective; a similar argument is presented in \cite[Theorem 5.18]{raphael} already but we include a proof as well for the sake of completeness. 

\smallskip

Our construction will be a modification of \cite[Theorem 2.3]{hns} where a Suslin tree $T$ is constructed such that $T$ and $T\uhr Y$ are tree isomorphic whenever $Y\subseteq T$ is large. Actually, we believe that the tree constructed in \cite[Theorem 2.3]{hns} can be doubly ordered to satisfy our requirements but we repeat the construction anyways. The proof that large subsets of $T$ are actually doubly isomorphic to $T$ and not just isomorphic as trees requires extra thought anyways.  It seems that the authors of \cite{hns} were not aware of Baumgartner's \cite{baum} at the time of publishing.

 We start by stating the relevant main lemma \cite[Lemma 2.10]{hns} in a strong form. Given an $\omega$-branching tree $T$ and a set $B$ of  unbounded branches in $T$, one can naturally define a tree $\tradd{T}{B}$ which is an end extension of $T$ as follows: $\tradd{T}{B}$ is defined on the set $T\cup \{(b,i):b\in B, i<\oo\}$ and we require that $s<(b,i)$ for all $s\in b\in B$ and $i<\oo$. Note that $\tradd{T}{B}$ is $\omega$-branching if $T$ was $\omega$-branching and $\Ht(\tradd{T}{B})=\Ht(T)+1$.
 
 The next crucial lemma says that such an extension can be done while preserving certain antichains and isomorphisms.

\begin{tlemma}\label{extlemma}
Suppose that $T$ is an $\oo$-branching tree of limit height $\alpha\in \omg$ and $\mc N$ is a countable set. Then there is a countable set $B=B(T,\mc N)$ of unbounded branches in $T$ so that the $\oo$-branching end extension $\tradd{T}{B}$ of $T$ of height $\alpha+1$ satisfies the following:
\begin{enumerate}
 \item if  $A\in \mc N$ is a  maximal antichain in $T$ then $A$ is still maximal in $\tradd{T}{B}$, and

 \item  if $f\in \mc N$ is a tree isomorphism between large subsets of $T$ then
 
\begin{enumerate}
 \item $b\in B$ implies $f[b]\in B$, and 
 \item the assignment $(b,i)\mapsto (f[b],i)$ extends $f$ to an isomorphism between large subsets of $\tradd{T}{B}$.
\end{enumerate}\end{enumerate}

\end{tlemma}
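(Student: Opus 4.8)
The plan is to isolate three combinatorial features of the branch set and derive both clauses from them, then to construct $B$ by a single recursion of length $\oo$. Let $\Gamma$ denote the countable collection of partial tree-isomorphisms of $T$ obtained by composing finitely many of the isomorphisms in $\mc N$ (between large subsets) together with their inverses; each $g\in\Gamma$ is again a tree-isomorphism between two large subsets of $T$. First I would construct a countable set $B$ of unbounded branches so that: (i) every node of $T$ lies on some $b\in B$; (ii) every $b\in B$ meets every maximal antichain $A\in\mc N$ of $T$; and (iii) $B$ is closed under $\Gamma$, i.e.\ $g[b]\in B$ whenever $b\in B$ and $b\subseteq\dom(g)$.

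Granting such a $B$, the two clauses follow quickly. The nodes of $\tradd{T}{B}$ above level $\alpha$ are exactly the pairs $(b,i)$, and $(b,i)$ is comparable to $a\in A$ precisely when $a\in b$; hence a maximal antichain $A$ of $T$ remains maximal in $\tradd{T}{B}$ iff every $b\in B$ meets $A$, which is (ii), giving clause (1). For clause (2), given $f\in\mc N$ we have $f,f^{-1}\in\Gamma$, so (iii) yields (2)(a); I then define $\hat f$ on the new nodes by $(b,i)\mapsto(f[b],i)$ and check that $\hat f$ is a tree-isomorphism extending $f$, with injectivity and surjectivity onto the relevant top nodes coming from closure under both $f$ and $f^{-1}$.

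The engine of the construction is an extension trick: from any node $s$ one can always climb to meet a prescribed maximal antichain, since $s$ is comparable to some $a\in A$ and then either $a\le s$ or $s$ can be extended upward to $a$. I would upgrade this to subtrees: if $Y$ is large then $A\cap T[Y]$ is a maximal antichain of $T[Y]$ -- given $v\in T[Y]$, pick a witness $y\in Y$ with $v\le y$ (here largeness is used) and some $a\in A$ comparable to $y$; then $a\in T[Y]$ and $a$ is comparable to $v$. This is exactly what forces the $\Gamma$-images to meet antichains as well, because $g[b]$ meets $A$ iff $b$ meets $g^{-1}[A\cap\ran(g)]$, which is a maximal antichain of the large subtree $\dom(g)$. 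For the recursion proper, I fix an increasing sequence $\langle\alpha_m:m<\oo\rangle$ cofinal in $\alpha$ and build an increasing chain of finite configurations, each a finite set of finite chains of $T$ closed under a finite portion of $\Gamma$. The bookkeeping at stage $m$ would (a) lengthen every chain past level $\alpha_m$ (possible as $T$ has no maximal nodes below $\alpha$), (b) route the chains to meet the $m$-th antichain via the trick above, (c) start a new chain through the $m$-th node of $T$, and (d) close the configuration under the $m$-th element of $\Gamma$. Since chains only ever grow upward, no earlier choice is undone; the unions of the chains are unbounded branches, and I let $B$ be their $\Gamma$-closure, which is countable.

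The hard part will be reconciling the genericity needed for (ii) with the rigidity imposed by (iii): every choice made to route a branch through an antichain is duplicated, via $\Gamma$, on all of its images, so one must guarantee that every branch in every $\Gamma$-orbit still meets every antichain of $T$. The subtree form of the extension trick together with the upward-only, interleaved bookkeeping is designed precisely to defeat this. A secondary delicate point is verifying that the extended map $\hat f$ is genuinely an isomorphism between \emph{large} subsets of $\tradd{T}{B}$, for which I would use the largeness of the domain and range of $f$ and the fact that $B$, by (i) and (iii), contains branches through cofinally many nodes of each of these subtrees.
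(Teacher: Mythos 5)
First, a point of context: the paper does not actually prove this lemma --- it is quoted from \cite[Lemma 2.10]{hns}, and the authors explicitly decline to reproduce the argument, warning that it ``is rather delicate and requires careful thought.'' So your sketch has to stand on its own, and it does not: the two claims that carry all the weight are false as stated. The routing claim first. You assert that $g[b]$ meets $A$ iff $b$ meets $g^{-1}[A\cap \ran(g)]$, ``which is a maximal antichain of the large subtree $\dom(g)$.'' But $\ran(g)$ is merely a large subset, and it can be disjoint from $A$ (take $A$ to be a level of $T$ and $\ran(g)$ a large set living entirely above that level); then $g^{-1}[A\cap\ran(g)]=\emptyset$ and there is nothing to route through. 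Your own subtree observation only gives that $A\cap T[Y]$ is maximal in $T[Y]$, and $g^{-1}$ is not defined on $T[\ran(g)]\setm \ran(g)$, so the two halves of your argument never connect. What is true, and what a correct proof needs, is that $g[b]^{\downarrow}$ meets $A$ iff $b$ meets $D_{g,A}=\{x\in\dom(g):\exists a\in A\ (a\leq g(x))\}$, and that $D_{g,A}$ is dense above every point of $\dom(g)$; proving this density uses maximality of $A$ in $T$ \emph{together with} largeness of $\ran(g)$ (climb from $g(x)$ past an element of $A$, then back up into $\ran(g)$, and pull back by $g^{-1}$). This part is repairable, but it is not what you wrote.

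The second gap is structural and defeats the architecture of your proof. Since $\dom(f)$ need not be downward closed while branches are, your closure condition (iii) with ``$b\subseteq\dom(g)$'' is essentially vacuous; $f[b]$ must mean $(f[b\cap\dom(f)])^{\downarrow}$, defined when $b\cap\dom(f)$ is unbounded. With this (forced) reading, closing $B$ under $\Gamma$ (compositions of the maps in $\mc N$) is \emph{not} the same as closing under the branch-image operations that clause (2)(a) demands, because downward closures do not commute with composition of partial isomorphisms: $f[\,g[b]^{\downarrow}\,]$ uses all points of the branch $g[b]^{\downarrow}$ lying in $\dom(f)$, whereas $(f\circ g)[b]$ uses only points $x\in b$ with $g(x)\in\dom(f)$. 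Concretely, in the countably-branching tree of height $\oo$ let $\dom(f)$ be the set of nodes at even levels and $\ran(g)$ the set of nodes at odd levels (both large, and such isomorphisms exist): then $\dom(f\circ g)=g^{-1}(\ran(g)\cap\dom(f))=\emptyset$, yet $g[b]^{\downarrow}\cap\dom(f)$ is unbounded for \emph{every} $b$, so $f[\,g[b]^{\downarrow}\,]^{\downarrow}$ is a genuine unbounded branch which your $\Gamma$-closure has no reason to contain --- so your $B$ fails (2)(a). Repairing this forces closure under arbitrary finite words in $\mc N\cup\mc N^{-1}$ acting by branch-images, and then every such iterated image must still meet every antichain in $\mc N$, which requires formulating and meeting pullback dense sets along words, by induction on their length. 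You must also secure a dichotomy your sketch never mentions: each $b\in B$ meets each $\dom(f)$ either not at all or unboundedly; otherwise a top node $(b,i)$ with $b\cap\dom(f)$ nonempty but bounded lies in $(\tradd{T}{B})[\dom(\hat f)]$ with no extension in $\dom(\hat f)$, so the domain of the extended map is not large and (2)(b) fails. These interlocking requirements are precisely the ``delicate'' core of \cite{hns} that your proposal elides.
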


The authors of \cite{hns} actually state their Lemma 2.10 with less details about the nature of the extension but they prove exactly what we wrote above; the proof is rather delicate and requires careful thought. We omit reproducing this argument here but highly recommend that the interested reader studies the details.

Suppose we are in the setting of Lemma \ref{extlemma} and $T$ is actually doubly ordered. We can extend this double order to $\tradd{T}{B}$ as follows: order $\{(b,i): i<\oo\}$ as $\mb Q$ for each $b\in B$ by a fixed bijection $\varphi:\oo\to \mb Q$. The particular way in which the trees and isomorphisms in Lemma \ref{extlemma}  were extended allows double isomorphism to extend too; this will be the content of the next lemma. Let $\mc L(T,S)$ be the set of double isomorphisms between large subsets of $T$ and $S$.

\begin{tlemma}\label{extension} Suppose that $T,\mc N$ and $B$ are as in Lemma \ref{extlemma}, $T$ is doubly ordered and $T^{*}$ is a doubly ordered end extension of $\tradd{T}{B}$. 

If $f\in \mc N\cap \mc L(T,T)$ and  $g$ is a bijection between two large subsets of $\tradd{T}{B}$ and $T^{*}$ so that 
\begin{enumerate}[(1)]
 \item $f\subseteq g$,
 \item $t\leq_{T^{*}}g(b,i)$ for each $t\in f[b],b\in B$, and $i<\oo$, and
 \item  $g\uhr\{(b,i):i<\oo\}$ is $<_{\lex}$-preserving for each $b\in B$.
\end{enumerate}

then $g\in \mc L(\tradd{T}{B}, T^*)$ i.e. $g$ is a double isomorphism as well.
 \end{tlemma}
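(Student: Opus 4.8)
The plan is to verify directly that the given bijection $g$ preserves both the tree order and the lexicographic order, since a bijection between large subsets preserving both is by definition a double isomorphism. As $<_{\lex}$ is a linear order, an order preserving bijection is automatically a lex-isomorphism, so for the lexicographic part it suffices to check $s<_{\lex}t\Rightarrow g(s)<_{\lex}g(t)$, while for the tree order I would verify the equivalence $s<t\Leftrightarrow g(s)<_{T^{*}}g(t)$ in both directions. Throughout I use that $\tradd{T}{B}$ and $T^{*}$ are end extensions, so the tree, branching and lexicographic orders restricted to $T$-nodes agree with those of $T$; combined with $f\subseteq g$ and $f\in\mc L(T,T)$, every comparison between two nodes of $T$ is already handled. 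Thus the entire content lies in comparisons involving the new top-level nodes $(b,i)$.

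The structural key is condition (2). First I would show that the set of $T$-nodes below $g(b,i)$ in $T^{*}$ is exactly the branch $f[b]$: this set is a downward closed chain of $T$ containing the unbounded (height $\alpha$) branch $f[b]$ by (2), hence equals it. Three consequences follow. For $s\in\dom f$ we get $f(s)<_{T^{*}}g(b,i)$ iff $f(s)\in f[b]$ iff $s\in b$, which is precisely tree-order preservation between $T$-nodes and new nodes (recalling that $s<(b,i)$ in $\tradd{T}{B}$ iff $s\in b$). For $b\neq b'$ the branches $f[b]\neq f[b']$ are distinct of height $\alpha$, so $g(b,i)$ and $g(b',i')$ sit over $\leq_{T^{*}}$-incomparable branches and are therefore incomparable. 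For $b=b'$ and $i\neq i'$ the images are two distinct immediate successors of the branch $f[b]$, again incomparable. Since all new nodes are pairwise incomparable in $\tradd{T}{B}$, this finishes the tree order.

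The heart of the proof is the lexicographic order for tree-incomparable pairs, where the comparison is decided by $\tri$ at the divergence level. My strategy is to transfer every such comparison back to one that $f$ already respects, by approximating a branch with sufficiently high nodes. Concretely, for $s\in\dom f$ with $s\notin b$, let $\eta$ be the level at which $s$ leaves $b$; since $b\cap\dom f$ is cofinal in $b$ (which is exactly what makes $f[b]$ a genuine branch), choose $s'\in b\cap\dom f$ with $\Ht(s')>\eta$ and with $f(s')$ above the node where $f(s)$ leaves $f[b]$. Then $s<_{\lex}(b,i)\Leftrightarrow s<_{\lex}s'$, both being decided by $s\uhr\eta\mathrel{\tri}b\uhr\eta$, while $f(s)<_{\lex}g(b,i)\Leftrightarrow f(s)<_{\lex}f(s')$, both being decided at the node where $f(s)$ leaves $f[b]$ (which lies below $f(s')$ and below $g(b,i)$ by (2)). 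Since $f$ preserves $<_{\lex}$, the middle equivalence $s<_{\lex}s'\Leftrightarrow f(s)<_{\lex}f(s')$ closes the chain. The same-branch case $b=b'$ is exactly condition (3), and the distinct-branch case $b\neq b'$ reduces, by the same high-node approximation carried out on both branches, to $f$'s preservation of $<_{\lex}$.

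I expect the main obstacle to be precisely this transfer step: one must guarantee that the divergence node of $f(s)$ from the branch $f[b]$ is witnessed below an honest node $f(s')$ with $s'\in b\cap\dom f$, and that $g(b,i)$, lying above all of $f[b]$ by (2), diverges from $f(s)$ at that same node and on the same $\tri$-side. Because $f$ is only defined on a cofinal, not downward closed, subset of $T$, none of these comparisons may be read off meets in $T$ directly; each must be realized by actual nodes of $\dom f$ sitting above the relevant divergence level, which is where the largeness of $\dom f$ and the density of each $(\textmd{br}(t),\tri)\cong\mb Q$ enter. Verifying that condition (2) forces $g(b,i)$ onto the correct lexicographic side of the two halves of $f[b]$ relative to $f(s)$ is the delicate point that upgrades the tree isomorphism of Lemma \ref{extlemma} to a genuine double isomorphism.
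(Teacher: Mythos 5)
Your proposal is, on the lexicographic side, the same strategy as the paper's proof: split according to whether the pair involves new nodes, handle tree-comparable pairs by (2), same-branch pairs of new nodes by (3), and reduce every remaining comparison to the fact that a $<_{\lex}$-comparison between tree-incomparable nodes is inherited by anything sitting above them, where $f$'s double-isomorphism property decides the comparison. The genuine difference is how that reduction is executed. The paper applies $f$ directly to the divergence-level nodes, writing $f(s\uhr \xi)<_{\lex}f(t\uhr \xi)$; since $\dom(f)$ is only large (cofinal), not downward closed, $s\uhr\xi$ and $t\uhr\xi$ need not lie in $\dom(f)$, so that step is, read literally, unjustified. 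Your ``high-node approximation'' --- replacing the branch by an actual $s'\in b\cap\dom(f)$ of height above the divergence level and transferring through $s<_{\lex}s'\Leftrightarrow f(s)<_{\lex}f(s')$ together with $f(s')\leq_{T^*}g(b,i)$ --- is exactly the repair this needs, and you correctly single it out as the crux. On this point your write-up is tighter than the paper's.

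On the tree-order side the situation is reversed: the paper merely asserts that $g$ is a tree isomorphism ``by (1), (2) and Lemma \ref{extlemma}'', while you argue it. Identifying the $T$-predecessors of $g(b,i)$ with $f[b]$ is fine, and it gives preservation between old and new nodes and incomparability across distinct branches. But your same-branch case rests on the claim that $g(b,i)$ and $g(b,i')$ are \emph{immediate} successors of the branch $f[b]$, and nothing in (1)--(3) forces that: if $T^{*}$ is an end extension of height greater than $\Ht(\tradd{T}{B})$, then $g(b,i)$ may lie strictly above some node $(f[b],j)$, and one can satisfy (1)--(3) with, say, $g(b,i)=(f[b],j)$ and $g(b,i')$ a node above it --- lex-preservation survives this, tree-incomparability does not. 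This is not a defect relative to the paper, whose one-line assertion hides the same issue; and in the lemma's only application (inside Theorem \ref{suslinthm}) the images of the new nodes are taken on the top level, where your claim is true. Still, be aware that this case is the one place where your argument, and the lemma as literally stated for arbitrary $T^{*}$, needs either that the $g(b,i)$ land on level $\Ht(T)$ or an additional appeal to the largeness of $\ran(g)$.
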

 
 \begin{figure}[H]\centering
 \psscalebox{0.7} 
{
\begin{pspicture}(0,-2.8047407)(18.5,2.8047407)
\rput{102.81887}(9.697806,-4.889845){\psarc[linecolor=black, linewidth=0.04, dimen=outer](6.8,1.424599){1.06}{242.44719}{270.0}}
\rput{-74.724846}(1.5866416,4.9272623){\psarc[linecolor=black, linewidth=0.04, dimen=outer](4.02,1.424599){1.06}{242.44719}{270.0}}
\psbezier[linecolor=black, linewidth=0.04](3.0,1.024599)(3.0,0.224599)(3.8,-2.175401)(5.4,-2.1754010009765623)(7.0,-2.175401)(7.8,0.224599)(7.8,1.024599)
\psline[linecolor=black, linewidth=0.04](1.34,1.404599)(8.6,1.424599)
\psline[linecolor=black, linewidth=0.04](1.28,1.024599)(2.6525,-2.575401)(7.2275,-2.575401)(8.6,1.024599)(1.28,1.024599)
\psbezier[linecolor=black, linewidth=0.04](11.6,1.064599)(11.72,0.264599)(12.4,-2.135401)(14.0,-2.1354010009765627)(15.6,-2.135401)(16.3,0.284599)(16.4,1.064599)
\psline[linecolor=black, linewidth=0.04](10.8,1.464599)(17.2,1.464599)
\psline[linecolor=black, linewidth=0.04](10.8,1.064599)(12.0,-2.535401)(16.0,-2.535401)(17.2,1.064599)(10.8,1.064599)
\psline[linecolor=black, linewidth=0.04, linestyle=dashed, dash=0.17638889cm 0.10583334cm, arrowsize=0.05291667cm 2.0,arrowlength=1.4,arrowinset=0.0]{->}(8.6,-0.975401)(10.6,-0.975401)
\psline[linecolor=black, linewidth=0.04, linestyle=dashed, dash=0.17638889cm 0.10583334cm, arrowsize=0.05291667cm 2.0,arrowlength=1.4,arrowinset=0.0]{->}(8.78,2.224599)(10.34,2.224599)
\rput[bl](9.52,-0.635401){$f$}
\rput[bl](9.5,2.544599){$g$}
\psline[linecolor=black, linewidth=0.04](10.82,1.444599)(10.52,2.784599)(17.6,2.744599)(17.18,1.464599)
\rput{-69.4407}(6.888257,14.6295595){\psarc[linecolor=black, linewidth=0.04, dimen=outer](14.0,2.344599){2.6}{239.82298}{270.0}}
\rput{103.959206}(19.552382,-10.958009){\psarc[linecolor=black, linewidth=0.04, dimen=outer](14.06,2.164599){2.48}{239.82298}{270.0}}
\psline[linecolor=black, linewidth=0.04](13.84,-2.715401)(14.7,-0.555401)(14.1,0.204599)(14.38,0.924599)
\psline[linecolor=black, linewidth=0.04](5.84,-2.795401)(4.9,-1.015401)(5.16,-0.195401)(4.62,0.844599)
\psline[linecolor=black, linewidth=0.04](5.24,-1.715401)(6.44,-0.235401)(6.58,0.844599)
\rput[bl](4.1,0.264599){$b$}
\psdots[linecolor=black, dotsize=0.16](4.58,1.404599)
\psdots[linecolor=black, dotsize=0.16](14.52,2.044599)
\rput[bl](13.22,0.164599){$f[b]$}
\rput[bl](13.38,2.044599){$g(b,i)$}
\rput[bl](4.14,1.784599){$(b,i)$}
\rput[bl](6.22,1.964599){$\dom(g)$}
\rput[bl](2.6,-1.615401){$\dom(f)$}
\rput[bl](17.38,-1.135401){\Large{$T$}}
\rput[bl](18.12,1.944599){\Large{$T^*$}}
\rput[bl](0.5,-1.475401){\Large{$T$}}
\rput[bl](0.0,1.784599){\Large{$\tradd{T}{B}$}}
\end{pspicture}
}\caption{The setting of Lemma \ref{extension}}
\label{fig:lemma}
  
 \end{figure}
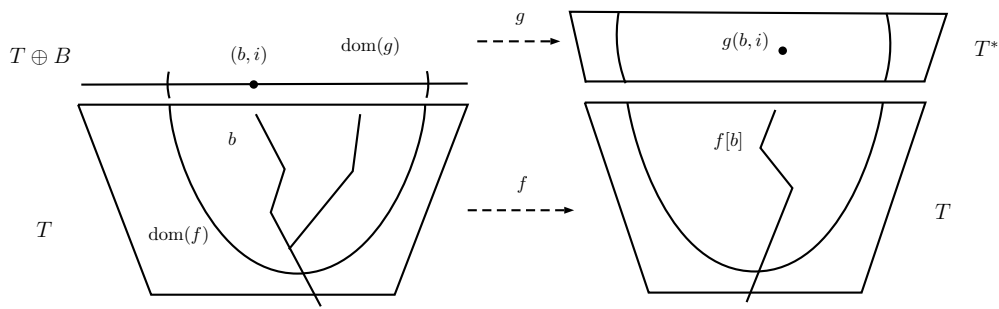

\begin{proof}
 
 Suppose that $g$ is as above and $s<_{T\oplus B}t\in \dom(g)$. Note that $g$ must be a tree isomorphism by (1) and (2) and Lemma \ref{extlemma}.
 
 First, if $t=(b,i)$ then $f(s)=g(s)\in f[b]<_{T^{*}}g(b,i)$ by (2). 
 
 Second, suppose that $s\uhr \xi<_{\lex}t\uhr \xi$ where $\xi$ is minimal so that $s\uhr \xi\neq t\uhr \xi$; if $\xi=\alpha$ then (3) implies that $g(s)<_{\lex} g(t)$ as well.
 If $\xi<\alpha$ then $f(s\uhr \zeta)=f(t\uhr \zeta)$ for all $\zeta<\xi$ and  $f(s\uhr \xi)<_{\lex}f(t\uhr \xi)$  since $f$ was assumed to be a double isomorphism. Now, $f(s\uhr \xi)=g(s\uhr \xi)<_{T^{*}} g(s)$ and $f(t\uhr \xi)=g(t\uhr \xi)<_{T^{*}} g(t)$ so $g(s)<_{\lex} g(t)$ as desired.

\end{proof}

Finally, we need:

\begin{tlemma}\label{bflemma} Any two countable, $\oo$-branching doubly ordered trees $S,T$ of the same countable limit height are doubly isomorphic.
 \end{tlemma}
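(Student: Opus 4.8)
The plan is to prove Lemma \ref{bflemma} by a back-and-forth argument, building the double isomorphism between the two countable $\oo$-branching doubly ordered trees $S$ and $T$ level by level, while carefully maintaining at each finite stage a \emph{partial double isomorphism} between initial segments that is simultaneously a tree isomorphism and a $<_{\lex}$-isomorphism on its domain. The key structural fact I will exploit is that both trees have the same limit height $\alpha$, are $\oo$-branching (so every node has infinitely many immediate successors, and there is genuine branching at limit levels as well), and carry at each branching set $\textmd{br}(t)$ a linear order isomorphic to $\mb Q$. The homogeneity of $\mb Q$ — it is the countable dense order without endpoints, and all of its nontrivial intervals are again copies of $\mb Q$ — is what makes the back-and-forth extension possible at each step.

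First I would enumerate $S=\{s_n:n<\oo\}$ and $T=\{t_n:n<\oo\}$ and build an increasing chain of finite partial maps $g_n$, alternating between ensuring $s_n\in\dom(g_n)$ and $t_n\in\ran(g_n)$. The invariant I maintain is: $g_n$ is a finite partial double isomorphism whose domain is a finite subtree of $S$ closed downward, and which respects both orders. To extend $g_n$ so as to accommodate a new point $s$, I first add all of $s^\downarrow$ (the points below $s$), handling one new level at a time. The crucial local step is: given that I have already matched a node $u$ in $S$ to $g(u)$ in $T$, and I wish to extend the map to some immediate successor $v$ of $u$ lying in some $\textmd{br}(w)\cong\mb Q$, I must choose an image in the corresponding $\mb Q$-ordered branching set in $T$ respecting the $\tri$-order among the already-chosen successors. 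Since $(\textmd{br}(w),\tri)\equiv\mb Q$ on both sides, and I have only finitely many images fixed so far, the density and lack of endpoints of $\mb Q$ guarantee a legal image is always available in the correct position.

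The \emph{main obstacle} will be handling the \textbf{limit levels}: because the trees are $\oo$-branching and branch at limit levels, specifying an initial segment $v^\downarrow$ of a node $v$ at a limit level $\xi$ does not determine $v$, so I cannot simply take unions of images along cofinal chains. Instead, when I need to place a node $v$ at a limit level whose predecessor-chain $v\uhr\zeta$ ($\zeta<\xi$) has already been matched, I must select, among the (infinitely many) nodes of $T$ sitting above the image chain at that same limit level, one that respects the lexicographic constraints imposed by the finitely many already-placed lex-neighbors of $v$. Here I again invoke that the relevant branching structure is dense and that only finitely many constraints are active, so a suitable node exists; the $\oo$-branching hypothesis is exactly what supplies enough nodes above any given chain to make the choice. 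One must verify that the lexicographic order is correctly preserved across such limit-level choices, which reduces — as in the proof of Lemma \ref{extension} — to comparing the images at the first level of disagreement.

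Finally I would set $g=\bigcup_{n<\oo}g_n$ and check that it is a bijection (surjectivity and totality from the alternation, injectivity from the invariant) which preserves both the tree order and $<_{\lex}$ in both directions, hence is the desired double isomorphism. The only step that genuinely needs care beyond bookkeeping is the limit-level extension just described; everything else is the standard back-and-forth verification that the maintained invariant is preserved under each elementary extension step.
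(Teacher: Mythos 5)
Your overall plan --- a back-and-forth built from partial double isomorphisms --- is in spirit exactly the paper's proof, which consists of the remark that this is ``a standard back-and-forth argument'', recast as a Rasiowa--Sikorski argument for the poset of \emph{finite} partial double-isomorphisms. The gap lies in two of your specific design choices. First, your invariant is unsatisfiable once the common height exceeds $\omega$ (which it may: the lemma allows any countable limit height, and it is applied in the proof of Theorem \ref{suslinthm} to trees of such heights): a node $s$ of infinite height has infinitely many predecessors, so no finite downward-closed set contains it, and your remedy of ``first adding all of $s^\downarrow$, one level at a time'' turns a single extension step into an infinite sub-recursion, abandoning the finiteness of the approximations on which the back-and-forth bookkeeping depends.

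Second, and more seriously, your limit-level step maps the chain $\{v\uhr\zeta:\zeta<\xi\}$ into $T$ first and only then looks for an image of $v$ ``among the (infinitely many) nodes of $T$ sitting above the image chain''. Nothing guarantees that even one such node exists, and $\oo$-branching is not what supplies them: it says that each \emph{existing} node has infinitely many siblings, i.e.\ that a chain which already has \emph{some} upper bound at level $\xi$ has $\mb Q$-many, densely lex-ordered ones; it provides no upper bound for an arbitrary chain. Since $T$ is countable, only countably many cofinal branches of $T_{<\xi}$ carry a node at level $\xi$, while nothing in your greedy level-by-level choice of the image chain forces it to be one of these topped branches; so at this point the construction can stall, with no legal image for $v$ at all. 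The choices must be made in the opposite order, which is what the paper's poset formulation implicitly does: conditions are finite with \emph{no} downward-closure requirement, the image of a limit-level node is chosen first, and nodes below it are only later sent into the downward cone of that image. The substance of the lemma is then the density claim that a finite partial double isomorphism can absorb any prescribed node, i.e.\ that some node of $T$ realizes over the finite range the same tree- and lex-relations that the new node realizes over the finite domain; this existence statement, which is where the hypotheses on $S$ and $T$ must actually be used, is never addressed in your write-up.
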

 \begin{proof}
  
 This is a standard back-and-forth argument. Alternatively, consider the partial order $Q=Q_{S,T}$ of finite partial double-isomorphisms and show that the set of $q\in Q$ so that $t\in \dom(q),s\in\ran(q)$ is dense open for any $s\in S,t\in T$. Now apply the Baire-category theorem (or the Rasiowa-Sikorski lemma) to find a filter $G\subseteq Q$ meeting all these countably many dense sets. The map $\cup G:S\to T$ is the desired double isomorphism.
 
 \end{proof}

 
 We are ready to construct our Suslin tree now:

\begin{proof}[of Theorem \ref{suslinthm}]
 
 Let $(\mc S_\alpha)_{\alpha<\omg}$ denote the $\diamondsuit^+$ sequence. Take an increasing sequence of elementary submodels $(N_\alpha)_{\alpha<\omg}$ of $(H(\aleph_2),\in,\prec)$ so that $\mc S_\beta, (N_\alpha)_{\alpha<\beta}\in N_\beta$ for all $\beta<\omg$ (note that the sequence of models is not continuous). Here $\prec$ denotes an arbitrary well order of $ H(\aleph_2)$. 
 
 We construct an increasing sequence of $\oo$-branching, doubly ordered trees $(T^\alpha)_{\alpha\leq \omg}$ on subsets of $\omg$ so that 
 \begin{enumerate}[$(i)_\beta$]
 
 \item $(T^\alpha)_{\alpha\leq \beta}\in N_\beta$, 
 \item$\Ht(T^\beta)=\beta$, and
 \item the sequence of trees $(T^\alpha)_{\alpha\leq \beta}$ can be uniquely recovered from the sequence $(N_\alpha)_{\alpha<\beta}$,
 \item $T^\beta$ is an end extension of $T^\alpha$ for all $\alpha<\beta\leq \omg$.
  \end{enumerate}
  
  These properties will be ensured by making uniform choices (using $\prec$) when extending the trees.

 Suppose $(T^\alpha)_{\alpha< \beta}$ is constructed so that $(i)_{\beta'}-(iv)_{\beta'}$ holds for all $\beta'<\beta$. If $\beta$ is a limit then we let $T^\beta=\bigcup \{T^\alpha:\alpha< \beta\}$. If $\beta=\alpha+1$ and $\alpha$ is a successor then we take the $\prec$-minimal end extension of $T^\alpha$ in $N_\beta$ that satisfies our requirements (i) and (ii).  
 
 Finally, if $\beta=\alpha+1$ and $\alpha$ is a limit then we apply Lemma \ref{extlemma} to $T^\alpha$ and $\mc N=N_\beta$ and define $T^\beta=\tradd{T^\alpha}{B(T^\alpha,N_\alpha)}$. The set of branches $B_\alpha=B(T^\alpha,N_\alpha)$ is chosen $\prec$-minimal in $N_{\beta+1}$ which again ensures $(i)_\beta$ and $(iii)_\beta$.
 
 The tree $T=T^\omg$ we constructed is Suslin. Indeed, suppose that $A$ is a maximal antichain. Then there is an $\beta<\omg$ so that $A\cap \beta\in \mc S_\beta\subseteq N_\beta$ and $A\cap \beta$ is maximal in $T_{<\beta}=T^\beta$. Recall that we applied Lemma \ref{extlemma} with $\mc N=N_\beta$ to construct $T_{\leq\beta}=T^{\beta+1}$. So we preserved $A\cap \beta$ as a maximal antichain in $T_{\leq\beta}$ and hence in $T$. So $A=A\cap \beta$ is countable.

 Now suppose that $Y$ is large. Let $C$ be a club subset of $\omg$ so that $\gamma\in C$ implies that $C\cap \gamma, Y\cap \gamma \in N_\gamma$ and $T_{<\gamma}=T\cap \gamma$ and $Y\cap \gamma=Y_{<\gamma}$ is large in $T_{<\gamma}$. Let $C=\{\gamma_\nu:\nu<\omg\}$ denote the increasing enumeration of $C$.
 
 We inductively construct maps $(\pi_\nu)_{\nu\leq \omg}$  along the club $C$ so that 
 \begin{enumerate}
  \item $\pi_\nu:T_{<\gamma_\nu}\to Y\cap T_{<\gamma_\nu}$ is a double isomorphism and $\pi_\nu\in N_{\gamma_\mu}$,
  \item $\pi_\nu\subseteq \pi_{\nu'}$ for all $\nu<\nu'$, and
  \item the sequence of maps $(\pi_\mu)_{\mu\leq \nu}$ can be uniquely recovered from the sequence $(N_{\gamma_\mu})_{\mu<\nu}$,
 \end{enumerate}

The initial step can be achieved since $\min(Y)$ is isomorphic to the rational numbers. At limit steps, we take unions and our construction guarantees that the double isomorphism $\pi_{<\nu}=\cup_{\mu<\nu} \pi_
 \mu:T_{<\gamma_{\nu}}\to Y_{<\gamma_{\nu}}$ is in $N_{\gamma_\nu}$. Indeed, the model $N_{\gamma_\nu}$ contains enough information (the $\diamondsuit^+$ sequence and the club up to $\gamma_\nu$) so that we can recover the construction of the sequence $(\pi_\mu)_{\mu<\nu}$ working in $N_{\gamma_\nu}$, and hence define the union as well. This is where the full force of $\diamondsuit^+$ is applied.
 
 Now, at successors of limit stages we do the following: recall that in our construction of $T_{\leq \gamma_\nu}$ from $T_{<\gamma_\nu}$ we added nodes $\{(b,i):b\in B_{\gamma_\nu},i\in \oo\}$ for a countable set of unbounded chains $B_{\gamma_\nu}$. So, we will define $\rho:T_{\leq\gamma_{\nu}}\to Y_{\leq\gamma_{\nu}}$ in $N_{\gamma_\nu}$ such that 
 \begin{enumerate}
 \item $\pi_{<\nu}\subseteq \rho$,
 \item $\pi_{<\nu}[b]\leq_{T}\rho(b,i)$, and
 \item  $\rho\uhr\{(b,i):i<\oo\}$ is $<_{\lex}$-preserving for each $b\in B_{\gamma_\nu}$.
\end{enumerate}
 If we can do this then $\rho$ must be a double isomorphism by Lemma \ref{extension}. Next, we can extend $\rho $ to $\pi_\nu$ in $N_{\gamma_{\nu+1}}$  further by an isomorphism $T_{<\gamma_{\nu+1}}\setm T_{\leq\gamma_{\nu}}\to Y_{<\gamma_{\nu+1}}\setm Y_{\leq\gamma_{\nu}}$; this is possible by Lemma \ref{bflemma}.
 
Lets see how to construct $\rho$: first, recall that $(\pi_{<\nu}[b],i)$ is a node of $T_{\gamma_\nu}$ which is above the set $\pi_{<\nu}[b]=\{\pi_{<\nu}(t):t\in b\}$ for all $b\in B_{\gamma_\nu}$.  Now, consider $R_b=\{y\in Y_{\gamma_\nu}: \pi_{<\nu}[b]<y\}$.

\begin{obs}
$R_b$ is isomorphic to $\mb Q$ in the lexicographic order.  
\end{obs}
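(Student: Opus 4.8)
The plan is to pin $R_b$ down as a suborder of a concrete copy of $\mb Q$ living at level $\gamma_\nu$, and then to verify the three defining properties of $\mb Q$ (countable, dense-in-itself, no endpoints) so that Cantor's back-and-forth characterisation applies.

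First I would identify the ambient copy of $\mb Q$. Write $c=\pi_{<\nu}[b]$. Since $\pi_{<\nu}$ maps $T_{<\gamma_\nu}$ \emph{into} $Y$, the chain $c$ is contained in $Y$; and since $\pi_{<\nu}\in N_{\gamma_\nu}$ is a tree isomorphism between large subsets of $T_{<\gamma_\nu}$ while $b\in B_{\gamma_\nu}$, Lemma \ref{extlemma}(2)(a) gives $c=\pi_{<\nu}[b]\in B_{\gamma_\nu}$. Hence, by the way $\tradd{T_{<\gamma_\nu}}{B_{\gamma_\nu}}$ was formed, the nodes of level $\gamma_\nu$ lying above $c$ are exactly $C_c:=\{(c,i):i<\oo\}$, and since we ordered each such fan via the fixed bijection $\varphi:\oo\to\mb Q$ we have $(C_c,\llex)\equiv\mb Q$. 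Moreover a level-$\gamma_\nu$ node lies strictly above $c$ iff its branch extends $c$, i.e. iff it equals some $(c,i)$; thus $R_b=Y\cap C_c$, a countable suborder of a copy of $\mb Q$.

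By Cantor's theorem it then suffices to show that $R_b$ is dense-in-itself and has neither a least nor a greatest element in $\llex$, and this is exactly where largeness of $Y$ together with $c\subseteq Y$ enters. Fix $(c,i),(c,j)\in C_c$ with $(c,i)\llex(c,j)$; I would like to produce some element of $R_b$ strictly between them (letting $i$, resp. $j$, run off along $\varphi$ then yields the absence of endpoints). Every $(c,k)\in C_c$ lies in $T[Y]$, because $c\subseteq Y$ lies below it, so largeness (cofinality of $Y$ in $T[Y]$) supplies a witness $y\in Y$ with $(c,k)\le y$; and a short $\llex$-computation — using that ancestors are $\llex$-below their descendants and that $y\uhr\gamma_\nu=(c,k)$ — shows that $y$ itself falls in the $\llex$-interval $((c,i),(c,j))$ whenever $(c,k)$ does, that is, whenever $\varphi(i)<\varphi(k)<\varphi(j)$. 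The remaining, genuinely delicate, point is to push such a witness down to height exactly $\gamma_\nu$: one must convert ``there is a $Y$-node above $(c,k)$'' into ``there is a $Y$-node of height $\gamma_\nu$ above $c$ in the prescribed interval'', exploiting that $\gamma_\nu\in C$ is a limit level at which the $\oo$-branching construction forces every extension of $c$ to factor through a unique limit node $(c,i)$.

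The main obstacle is precisely this last conversion: cofinality of $Y$ in $T[Y]$ only hands us $Y$-nodes \emph{somewhere above} each $(c,k)$, whereas $R_b$ is the set of those of height exactly $\gamma_\nu$, so one cannot read off density of $R_b$ from largeness alone without using the limit-level structure at $\gamma_\nu$ (and the reflection packaged into the club $C$ and the models $N_{\gamma_\nu}$). Once density and the absence of endpoints are secured, Cantor's characterisation of $\mb Q$ closes the observation at once, and it is exactly $R_b\equiv\mb Q$ that will let us define the required $\llex$-preserving bijection $\rho\uhr\{(b,i):i<\oo\}\colon C_b\to R_b$ in the next step.
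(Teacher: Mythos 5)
Your proposal's decisive step, the identification $R_b=Y\cap C_c$, rests on reading $Y_{\gamma_\nu}$ as $Y\cap T_{\gamma_\nu}$, the set of points of $Y$ whose $T$-height is exactly $\gamma_\nu$; under that reading the Observation is false, which is why the ``genuinely delicate point'' you flag at the end cannot be carried out by any argument. Largeness of $Y$ only provides, for each node of $T[Y]$, \emph{some} node of $Y$ weakly above it; it gives no control whatsoever over the heights at which these witnesses appear, and nothing prevents a large $Y$ (even one with $\min Y\cong\mb Q$) from containing only nodes of successor height, in which case $Y\cap T_{\gamma_\nu}=\emptyset$ for every limit $\gamma_\nu$. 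So the ``conversion'' of a witness above $(c,k)$ into a witness of height exactly $\gamma_\nu$ --- the step you correctly isolate as the missing one --- simply does not exist, and no appeal to the club $C$, the models $N_{\gamma_\nu}$, or the limit-level structure of the construction can produce it.

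What the paper means by $Y_{\gamma_\nu}$ is the $\gamma_\nu$-th level of the induced tree $T\uhr Y$, i.e.\ the set of $\leq_T$-minimal points of $Y\setminus Y_{<\gamma_\nu}$; these can have arbitrary $T$-heights $\geq\gamma_\nu$. (This is unambiguous from the paper's own proof, which places into $R_{b,i}$ points of $Y$ lying above \emph{successors} of $(c,i)$, hence of height $>\gamma_\nu$, and treats $(c,i)\in Y$ as the special case where $R_{b,i}$ is a singleton; it is also what is needed for $\rho$ to be a tree isomorphism onto an initial part of $T\uhr Y$, since $\pi$ is not level-preserving.) With this reading, largeness is exactly enough: write $R_b=\bigcup_{i<\oo}R_{b,i}$ where $R_{b,i}=\{y\in Y_{\gamma_\nu}:(c,i)\leq_T y\}$. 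If $(c,i)\in Y$ then $R_{b,i}=\{(c,i)\}$. If $(c,i)\notin Y$, then for every successor $x$ of $(c,i)$ largeness yields some $u\in Y$ above $x$, and the $\leq_T$-minimal point of $Y\setminus Y_{<\gamma_\nu}$ below $u$ is again above $x$ (it cannot equal $(c,i)$, which is not in $Y$); since the successors of $(c,i)$ are ordered like $\mb Q$ by $\tri$, this makes $R_{b,i}$ a countable dense $\llex$-order without endpoints, i.e.\ a copy of $\mb Q$. Finally $R_b$ is the sum, along the $\mb Q$-ordered fan $\{(c,i):i<\oo\}$, of nonempty blocks each of which is a singleton or a copy of $\mb Q$, hence is itself countable, dense and endpoint-free, so $R_b\cong\mb Q$. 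Your opening observation --- that $c=\pi_{<\nu}[b]\in B_{\gamma_\nu}$ by Lemma \ref{extlemma}(2)(a), so that the level-$\gamma_\nu$ nodes above $c$ form the fan $\{(c,i):i<\oo\}\cong\mb Q$ --- is correct and is also implicit in the paper, but everything after it has to be rebuilt along these lines.
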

\begin{proof} First, note that $R_b=\bigcup\{R_{b,i}:i<\oo\}$ where $R_{b,i}=\{y\in Y_{\gamma_\nu}:(\pi_{<\nu}[b],i)\leq y\}$. If $(\pi_{<\nu}[b],i)\in Y$ then $R_{b,i}=\{(\pi_{<\nu}[b],i)\}$; otherwise, $R_{b,i}$ is isomorphic to $\mb Q$ since $Y$ was large so for each successor $x$ of $(\pi_{<\nu}[b],i)$ there is $x\leq y\in Y_{\gamma_\nu}$. This clearly implies that $R_b$ is isomorphic to $\mb Q$ as well.
\end{proof}

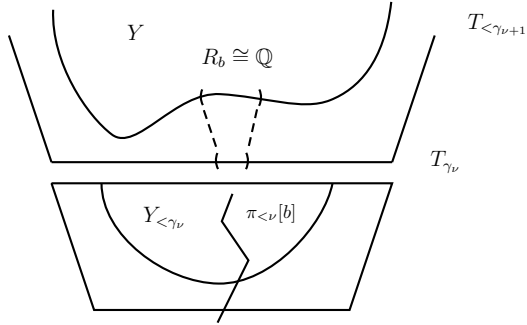
\begin{figure}[H]
\centering
 \psscalebox{0.7} 
{
\begin{pspicture}(0,-3.0436158)(9.638973,3.0436158)
\psline[linecolor=black, linewidth=0.04](0.8189737,-0.39479324)(1.6189737,-2.7947931)(6.418974,-2.7947931)(7.2189736,-0.39479324)(0.8189737,-0.39479324)
\psline[linecolor=black, linewidth=0.04](0.8189737,0.0052067568)(0.018973693,2.4052067)
\psline[linecolor=black, linewidth=0.04](0.8189737,0.0052067568)(7.2189736,0.0052067568)
\psline[linecolor=black, linewidth=0.04](7.2189736,0.0052067568)(8.018973,2.4052067)
\psbezier[linecolor=black, linewidth=0.04](0.8500615,2.8924994)(0.81038874,1.8837733)(1.4927074,0.87571114)(1.955026,0.5276489698311889)(2.4173448,0.17958678)(3.1434827,1.3415129)(3.9877927,1.2941146)(4.832103,1.2467161)(5.5290956,0.9688601)(6.0552406,1.1797621)(6.5813856,1.3906641)(7.072996,1.8954555)(7.2022233,3.0413747)
\psbezier[linecolor=black, linewidth=0.04](1.7589737,-0.39479324)(1.7572922,-1.3870184)(2.9590788,-2.280302)(3.9589736,-2.2947932434082032)(4.9588685,-2.3092844)(5.946005,-1.1795824)(6.0989738,-0.39479324)
\psline[linecolor=black, linewidth=0.04](3.9389737,-3.0347931)(4.518974,-1.8547932)(4.018974,-1.1147933)(4.2189736,-0.59479326)
\rput[bl](4.478974,-1.1347933){$\pi_{<\nu}[b]$}
\rput[bl](2.5389738,-1.2547933){\Large{$Y_{<\gamma_\nu}$}}
\rput[bl](7.938974,-0.19479324){\Large{$T_{\gamma_\nu}$}}
\rput[bl](8.638973,2.4052067){\Large{$T_{<\gamma_{\nu+1}}$}}
\rput[bl](2.2389736,2.3252068){\Large{$Y$}}
\rput{-62.509647}(1.0324594,4.197685){\psarc[linecolor=black, linewidth=0.04, dimen=outer](3.9743583,1.2482836){0.35076922}{219.87552}{270.0}}
\rput{114.14107}(7.3101516,-2.2686858){\psarc[linecolor=black, linewidth=0.04, dimen=outer](4.389743,1.2328991){0.36615384}{219.87552}{270.0}}
\rput[bl](3.6343584,1.8052068){\Large{$R_b\cong \mb Q$}}
\rput{-62.509647}(2.2611682,3.806765){\psarc[linecolor=black, linewidth=0.04, dimen=outer](4.266666,0.04059137){0.35846153}{219.87552}{270.0}}
\rput{114.14107}(5.868275,-3.7348418){\psarc[linecolor=black, linewidth=0.04, dimen=outer](4.143589,0.032899063){0.36615384}{219.87552}{270.0}}
\psline[linecolor=black, linewidth=0.04, linestyle=dashed, dash=0.17638889cm 0.10583334cm](3.9112813,0.29751444)(3.6805122,0.98982215)
\psline[linecolor=black, linewidth=0.04, linestyle=dashed, dash=0.17638889cm 0.10583334cm](4.526666,0.31289905)(4.680512,1.0052067)
\end{pspicture}
}
\caption{Extending $\pi_{<\nu}$}
\label{fig:extdiag}
\end{figure}

Hence, we can take a sequence $(\rho_b)_{b\in B_{\gamma_\nu}}$ so that $\rho_b:\{(b,i):i<\oo\}\to R_b$ is an order isomorphism; we make sure to choose this sequence $\prec$-minimal and in $N_{\gamma_\nu}$. We let $\rho=\pi_{<\nu}\cup\{\rho_b:b\in B_{\gamma_\nu}\}$ and $\rho\in N_{\gamma_\nu}$ is as desired.

At successors of successors, we simply apply Lemma \ref{bflemma}. Finally, $\pi=\pi_\omg$ is a double isomorphism between $T$ and $Y$.

\end{proof}

\begin{cor}\label{cor:strongsuslin} Under $\diamondsuit^+$, there is a lexicographically ordered Suslin tree which is strongly surjective and minimal. 
\end{cor}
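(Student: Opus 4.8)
The plan is to derive both properties of $L=(T,<_{\lex})$ from the self-similarity in Theorem \ref{suslinthm}: every large $Y\subseteq T$ whose set of $\tri$-minimal nodes is isomorphic to $\mb Q$ satisfies $T\uhr Y\cong T$ (as a double, hence lexicographic, order). First I would dispose of countable suborders. The bottom level $T_0$ is isomorphic to $\mb Q$, every node has a unique level-$0$ ancestor, and the retraction $t\mapsto t\uhr 0$ is an order preserving surjection $L\onto\mb Q$: it is monotone because $s<_{\lex}t$ forces $s\uhr 0\triq t\uhr 0$ in $T_0$, and it is onto since $q\uhr 0=q$ for $q\in T_0$. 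As $\mb Q$ is strongly surjective, for any countable $K\subseteq L$ we get $L\onto\mb Q\onto K$. So from now on fix an \emph{uncountable} $K\subseteq L$.

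The key combinatorial step is to extract from $K$ a large subset with rational bottom level. Since $T$ is Suslin, hence Aronszajn and $\oo$-branching, the nodes $t$ with $\{\Ht(k):k\in K,\ t\leq k\}$ cofinal in $\omg$ form an uncountable, downward closed subtree $U$; threading a subset of $K$ through $U$ and discarding at most countably many points $Y_0$ one obtains a large $K'=K\setm Y_0\subseteq K$ whose $\tri$-minimal nodes form a dense-in-itself countable antichain, i.e.\ $\min K'\cong\mb Q$. By Theorem \ref{suslinthm}, $T\uhr K'$ is doubly isomorphic to $T$; as $K'\subseteq K$, this already exhibits a copy of $L$ as a suborder of $K$, so $L$ is minimal.

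For strong surjectivity I would mimic the proof that the increasing set $A$ is strongly surjective (the Proposition preceding Theorem \ref{thm:notmin}). Put $Z=\sum_{x\in K}C_x$, where $C_x=\{x\}$ for $x\in K'$ and $C_x$ is a fixed copy of $L$ for the countably many $x\in Y_0$. Collapsing each block gives $Z\onto K$ (as recalled in the Notations, $\sum_{x\in K}C_x\onto K$), so it suffices to show $Z\cong L$. I would prove this by realizing $Z$ as $T\uhr W$ for a large $W\subseteq T$ with $\min W\cong\mb Q$: starting from the large copy $K'$ of $T$, insert at each of the countably many lex-positions $x\in Y_0$ a cone $T\uhr(s^{\uparrow})$ above a suitable immediate successor $s$ of $x$ — each such cone being a large subtree with rational bottom, hence itself a copy of $T$ by Theorem \ref{suslinthm} — placed in the gap occupied by $x$ so that $K'$ is undisturbed. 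The resulting $W$ is again $\oo$-branching, large and has $\min W\cong\mb Q$, whence $T\uhr W\cong T$ and $Z\cong L$. Composing, $L\cong Z\onto K$, as required.

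The main obstacle is precisely this realization step together with the extraction in the second paragraph: one must verify that an arbitrary uncountable $K$ really contains a large $K'$ with $\min K'\cong\mb Q$, and that the lexicographic sum $Z$ genuinely sits inside $T$ as a large subtree with rational bottom level, so that the isomorphism property of Theorem \ref{suslinthm} applies. Everything else — the countable case, the monotone collapse $Z\onto K$, and the fact that cones above nodes are copies of $T$ — is routine from the definitions and from $\mb Q$ being strongly surjective.
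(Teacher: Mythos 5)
Your treatment of countable suborders and of minimality follows the paper's route (extract a large subset with rational bottom, quote Theorem \ref{suslinthm}), but your strong surjectivity argument takes a genuinely different path, and it has a genuine gap exactly at the step you flag: realizing $Z=\sum_{x\in K}C_x$ as $T\uhr W$ with $W\supseteq K'$ and the isomorphism fixing $K'$ pointwise. The inserted copy at the position of a discarded point $x$ must lie lex-strictly between $\{k\in K':k<_{\lex}x\}$ and $\{k\in K':x<_{\lex}k\}$, and in general \emph{no subset of $T$, let alone a cone over an immediate successor of $x$, fits in that cut}. Concretely: fix $b\in T$, let $c_n\in \textmd{br}(b\uhr n)$ with $c_n\tri b\uhr n$, and put $K=\{b\}\cup b^{\uparrow}\cup\bigcup_{n<\oo}(\{c_n\}\cup c_n^{\uparrow})$. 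Here the $c_n$ lex-increase to $b$. Any admissible $K'$ must discard $b$ (otherwise $\min K'$ has a largest element), so $Z$ contains a copy of $L$ sitting lex-above every $c_n^{\uparrow}$ and lex-below every point of $b^{\uparrow}$, where $c_n^{\uparrow},b^{\uparrow}\subseteq K'$ are kept pointwise. But a short computation shows the only point $t\in T$ with $c_n^{\uparrow}<_{\lex}t<_{\lex}b^{\uparrow}$ for all $n$ is $t=b$ itself: any $t\tri b$ satisfies $t\tri c_n$ for large $n$, and any $t<_T b$ satisfies $t<_{\lex}c_n$ for large $n$. So an uncountable block cannot be placed there, and no realization of $Z$ inside $T$ fixing $K'$ exists; in the simpler case $K=\{b\}\cup b^{\uparrow}$ your prescription degenerates outright, since every cone $s^{\uparrow}$ over an immediate successor $s$ of $b$ is already a subset of $K'$, so nothing gets inserted. (A secondary unproven point: your claim that one can always take $Y_0$ \emph{countable} with $K\setm Y_0$ large \emph{and} of rational bottom is exactly as delicate as the decomposition it is meant to replace; the standard extraction gives a large subset with rational bottom, but not a co-countable one.)

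The paper's proof avoids this entirely by never trying to embed an inflated copy of $K$ back into $T$. It proves two facts (Claim \ref{timesQclaim}): (1) $T\cong T\times\mb Q$, by splitting $\min T\cong\mb Q$ into $\mb Q$ many copies of $\mb Q$ and applying Theorem \ref{suslinthm} to each piece $T[\pi[I_q]]$; and (2) every $X\subseteq T$ is a lex-sum $\sum_{a\in A}K_a$ over a \emph{countable} index set, each block a singleton or a copy of $T$. Then the surjection is built directly: choose $h:\mb Q\onto A$ whose fibers are singletons exactly over the $T$-blocks, and map each fiber $T\times\{q\}$ of $T\times\mb Q\cong T$ either isomorphically onto a block $K_{h(q)}\cong T$ or by a \emph{constant} map onto a singleton block. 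The asymmetry is the whole point: for a surjection one may collapse entire copies of $T$ to single points, so redundancy is created on the domain side ($T\cong T\times\mb Q$), never inserted on the range side. Your scheme imports the template of the real-order Proposition before Theorem \ref{thm:notmin}, but there the isomorphism $Z\cong A$ came from the Abraham--Rubin--Shelah dichotomy (``no copy of $-A$ implies isomorphic to $A$''), which has no analogue here: the only isomorphism tool available is Theorem \ref{suslinthm}, whose hypotheses (a large suborder of $T$ with rational bottom) are precisely what the counterexample above shows you cannot arrange.
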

\begin{proof}
 We will show that the doubly ordered Suslin tree $T$ from Theorem \ref{suslinthm} with the lexicographic order is strongly surjective; $T$ is clearly minimal since each uncountable subset of a Suslin tree contains a large subset (see Observation \ref{suslinobs} below).

 \begin{claim}\label{timesQclaim}
  \begin{enumerate}[(1)]
   \item $T$ and $T\times \mb Q$ are isomorphic.
   \item If $X\subseteq T$ then there is a countable $A\subseteq T$ so that $X=\sum_{a\in A} K_a$ and for any $a\in A$ either
   \begin{enumerate}[(a)]
    \item $K_a$ is a singleton, or
    \item  $K_a$ isomorphic to $T$ and then there is a dense interval $I$ of $A$ around $a$ so that $a'\in I$ implies $K_{a'}$ is a copy of $T$ as well.
   \end{enumerate}
   \end{enumerate}
  Moreover, any linear order $T$ with properties (1) and (2) is strongly surjective. 
  
 \end{claim}
 
 \begin{proof}
 (1) First, write $\mb Q$ as $\sum_{q\in \mb Q}I_q$ so that $I_q\subseteq \mb Q$ is isomorphic to $\mb Q$. Take an isomorphism $\pi:\mb Q\to \min T$. Note that $T[\pi[I_q]]$ and $T$ are isomorphic since $T[\pi[I_q]]$ is a large subset of $T$. Hence, the decomposition $T=\sum_{q\in \mb Q}T[\pi[I_q]]$ witnesses that $T$ and $T\times \mb Q$ are isomorphic.
 
 (2) We start by a standard observation:
  \begin{obs}\label{suslinobs}
   If $X\subseteq T$ is uncountable then $X\cap a^{\uparrow}$ is large for some $a\in X$.
  \end{obs}
\begin{proof} 
 This is elementary Suslin tree combinatorics: we can show that $X\cap a^{\uparrow}$ is cofinal in $a^{\uparrow}$. Otherwise, for all $a\in X$ we can find  $t_a\geq a$ so that $\{t\in X:t>t_a\}$ is empty . Select a maximal antichain $W$ from $\{t_a:a\in X\}$ and pick any $b\in X$ so that $\Ht(b)>\Ht[W]$. Then $b\leq t_b$ and $t_a\leq t_b$ for some $t_a\in W$. This, however, implies that $t_a<b\in X$ which contradicts the choice of $t_a$.
\end{proof}

 Now, we will find a countable $A_0$ so that $X=\sum_{a\in A_0} K_a$ where either $K_a$ is a singleton or has order type $T$. (2) clearly follows, since each copy of $T$ is actually isomorphic to $T\times \mb Q$ by (1).
 
 In order to find $A_0$, let $S$ denote the minimal elements of the set $$\{a\in T: X\cap a^{\uparrow}\textmd{ is large}\}.$$ $S$ is an antichain so $|S|\leq \oo$.  Furthermore, the set  $R=X\setm \bigcup_{a\in S}(X\cap a^{\uparrow})$ is countable. Indeed, this follows from Observation \ref{suslinobs}.

 
 
Finally, let $A_0=S\cup R$ and  
 \[
     K_a= \begin{cases}
 		     \{a\}, & \text{for } a\in R, \text{ and} \\
 		     X\cap a^{\uparrow},  & \text{for } a\in S.
         \end{cases}
   \] 
 
 Now, if $K_a=X\cap a^{\uparrow}$ then $K_a$ is isomorphic to $T$ so $X=\sum_{a\in A_0}K_a$ as desired. 

 Finally, fix a $T$ with properties (1) and (2). Let  $X\subseteq T$ and we will show that $T\onto X$. Write  $X=\sum_{a\in A} K_a$ as in (2); our aim is to find an $f:T\times \mb Q\onto X$, and, since $T\times \mb Q$ and $T$ are isomorphic by (1), this will finish the proof.
 
 Let  \[
     Q_a= \begin{cases}
 		     \{a\}, & \text{if } K_a \text{ is a copy of }T, \text{ and} \\
 		     \mb Q,  & \text{otherwise} 
         \end{cases}
   \] for each $a\in A$.

 The choice of $A$ guarantees that $\sum_{a\in A}Q_a$ is countable and dense without endpoints, so it is isomorphic to $\mb Q$. The function $$h:\mb Q\cong \sum_{a\in A}Q_a\onto A$$ that maps $Q_a$ onto $\{a\}$ is order preserving and satisfies that $|h^{-1}(a)|=1$ iff $K_a$ is a copy of $T$.
 

 \begin{figure}[H]\centering
  \psscalebox{0.7} 
{
\begin{pspicture}(0,-2.2974513)(15.22,2.2974513)
\psdots[linecolor=black, dotsize=0.16](5.8,-1.0125488)
\psdots[linecolor=black, dotstyle=o, dotsize=0.16, fillcolor=white](3.0,-1.0125488)
\psline[linecolor=black, linewidth=0.04](4.0,-1.0125488)(7.4,-1.0125488)
\psdots[linecolor=black, dotstyle=o, dotsize=0.16, fillcolor=white](13.0,-1.0125488)
\psdots[linecolor=black, dotsize=0.16](10.2,-1.0125488)
\psline[linecolor=black, linewidth=0.04](8.6,-1.0125488)(14.6,-1.0125488)
\rput[bl](11.4,-2.5){$A$}
\psline[linecolor=black, linewidth=0.04](12.8,-0.6125488)(13.2,-0.6125488)
\psline[linecolor=black, linewidth=0.04](13.2,-0.6125488)(13.6,1.7874511)
\psline[linecolor=black, linewidth=0.04](12.8,-0.6125488)(12.4,1.7874511)
\psline[linecolor=black, linewidth=0.04](5.6,-0.6125488)(6.0,-0.6125488)
\psline[linecolor=black, linewidth=0.04](6.0,-0.6125488)(6.4,1.7874511)
\psline[linecolor=black, linewidth=0.04](5.6,-0.6125488)(5.2,1.7874511)
\psline[linecolor=black, linewidth=0.04](2.4,-0.6125488)(2.8,-0.6125488)
\psline[linecolor=black, linewidth=0.04](2.8,-0.6125488)(3.2,1.7874511)
\psline[linecolor=black, linewidth=0.04](2.4,-0.6125488)(2.0,1.7874511)
\psline[linecolor=black, linewidth=0.04](3.2,-0.6125488)(3.6,-0.6125488)
\psline[linecolor=black, linewidth=0.04](3.6,-0.6125488)(4.0,1.7874511)
\psline[linecolor=black, linewidth=0.04](3.2,-0.6125488)(2.8,1.7874511)
\rput[bl](12.8,1.1874511){$K_a$}
\rput{-64.6385}(2.4013097,1.7705748){\psarc[linecolor=black, linewidth=0.04, dimen=outer](2.6,-1.0125488){0.4}{216.57303}{270.0}}
\rput{115.2305}(3.9333348,-4.5197997){\psarc[linecolor=black, linewidth=0.04, dimen=outer](3.4,-1.0125488){0.4}{216.57303}{270.0}}
\psline[linecolor=black, linewidth=0.04](2.2,-1.0125488)(3.8,-1.0125488)
\psline[linecolor=black, linewidth=0.04](2.0,-1.0125488)(1.6,-1.0125488)
\rput[bl](2.8,-1.8125489){$Q_a$}
\rput[bl](14.8,1.1874511){\Large{$X$}}
\psline[linecolor=black, linewidth=0.04, linestyle=dashed, dash=0.17638889cm 0.10583334cm, arrowsize=0.05291667cm 2.0,arrowlength=1.4,arrowinset=0.0]{->}(7.2,0.78745115)(9.4,0.78745115)
\psline[linecolor=black, linewidth=0.04, linestyle=dashed, dash=0.17638889cm 0.10583334cm, arrowsize=0.05291667cm 2.0,arrowlength=1.4,arrowinset=0.0]{->}(7.0,-2.2125487)(9.4,-2.2125487)
\rput[bl](8.2,1.1874511){$f$}
\rput[bl](4,-2.5){$\mb Q\cong \sum_{a\in A} Q_a$}
\rput[bl](8.0,-2){$h$}
\rput[bl](-1.7,0.9874512){\Large{$T\cong T\times \mb Q$}}
\rput[bl](-1.3,0){\Large{$\cong \sum_{a\in A}T\times Q_a$}}
\rput[bl](2.6,1.9874512){$T\times Q_a$}
\end{pspicture}
}

\caption{Defining $f:T\onto X$}
\label{fig:iso}
 \end{figure}

  Next, define $f:T\times \mb Q\onto X$ so that $f\uhr T\times \{q\}$ is a map $T\times \{q\}\onto K_{h(q)}$ which is either constant or an isomorphism depending on whether $K_{h(q)}$ is a singleton or a copy of $T$. We prove that $f$ is order preserving: the only non trivial thing to check is the case when $s\in T\times \{q\}$, and $t\in T\times \{q'\}$ with $q<q'$ and $a=h(q)=h(q')$. However, then $K_a$ must be a singleton by the choice of $h$ so $f(s)=f(t)$.  

 \end{proof}

The above claim proves that $T$ is strongly surjective.

\end{proof}

We should mention that whether $\diamondsuit$ suffices for the construction of a minimal Aronszajn-type is not known, and the question was already raised by Baumgartner \cite{baum}.

\section{A model without uncountable strongly surjective linear orders}\label{sec:nouncountble}

We aim to show next that, in some models of ZFC, all strongly surjective linear orders must be countable. We have seen already that strong surjectivity is closely related to minimal orders, so it is very natural to look at models of ZFC where the only uncountable minimal orders are $\omg$ and $-\omg$:  J. Moore showed that if CH and \emph{axiom (A)} holds then this is true  \cite{onlymin}. Our goal will be to prove that the same assumptions imply that there are no uncountable strongly surjective linear orders either.

First, recall that axiom (A) says that given a ladder system $(C_\alpha)_{\alpha\in \lim(\omg)}$, functions $f_\alpha:C_\alpha\to \oo$ and a Hausdorff Aronszajn tree $T$, we can find a downward closed, pruned\footnote{Recall that $S$ is pruned iff each element of $S$ has uncountably many successors in $S$.} subtree $S\subseteq T$ and $f:S \to \oo$ so that if $u\in S$ is of limit height $\alpha$ then $f(u\uhr \xi)=f_\alpha(\xi)$ for almost all $\xi\in C_\alpha$.  Such an $f$ is called a $T$-uniformization.  

It was proved in \cite{onlymin} that models of CH + (A) can be produced by starting from CH and forcing with a countable support iteration of proper posets; the individual posets introduce the uniformizations carefully so that no new reals are added in the process (not even when iterating). Therefore CH can be preserved.
  
  Our goal is to prove the following result.
  
  \begin{theorem}\label{nonexthm}
CH + (A) implies that no lexicographically ordered Aronszajn tree is strongly surjective. In particular, it is consistent that CH holds and there are no uncountable, strongly surjective linear orders.
\end{theorem}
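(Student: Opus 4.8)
The plan is to first reduce the ``in particular'' clause to the main claim, and then prove the main claim by adapting Moore's uniformization technique \cite{onlymin} to the surjective (rather than injective) setting. For the reduction, assume CH and let $L$ be an uncountable strongly surjective linear order. Then $L$ is short, and by Corollary \ref{cor:noc} it contains no real suborder of size $\mf c=\aleph_1$, i.e.\ no uncountable separable suborder; hence $L$ is an Aronszajn type. Every Aronszajn type is carried by a (Hausdorff) Aronszajn tree, in the sense that $L$ is isomorphic to the lexicographic order on the nodes of such a tree $T$ (see \cite{stevotrees}); some care is needed to realise $L$ as the full node set rather than merely a set of branches, but this is standard. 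Since CH + (A) is consistent by \cite{onlymin}, it then suffices to prove that under CH + (A) no lexicographically ordered Aronszajn tree $(T,\llex)$ is strongly surjective.

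The key reformulation I would use is the one recorded in the Notations: a map $f\colon T\onto K$ witnesses $T\onto K$ exactly when the fibres $\{f^{-1}(k):k\in K\}$ form a partition of $T$ into $\llex$-convex sets whose quotient order is isomorphic to $K$. Thus strong surjectivity of $T$ says precisely that every suborder $K\subseteq T$ arises as such a convex condensation of $T$, and to refute it I must produce a single $K\subseteq T$ that is not a convex condensation. The lever is the behaviour of convex partitions at limit nodes: a convex set meets each level $T_\alpha$ in a $\llex$-convex subset, and for a node $u$ of limit height $\alpha$ the values $f(u\uhr\xi)$ are non-decreasing in $\xi$ and bounded above by $f(u)$, so $f(u)$ either equals $\sup_{\xi<\alpha}f(u\uhr\xi)$ (continuity) or lies strictly above it (a jump). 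This continuity/jump dichotomy at each limit node is a single bit of data that a ladder system can try to control.

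Concretely, I would fix a ladder system $(C_\alpha)_\alpha$ on $\omg$ and design functions $f_\alpha\colon C_\alpha\to\oo$ that encode, along each branch, an obstruction to both alternatives of the dichotomy. Applying axiom (A) to $T$ with this data yields a downward closed, pruned (hence again Aronszajn) subtree $S\subseteq T$ together with a uniformization $f\colon S\to\oo$ realising the prescribed limit behaviour almost everywhere along every branch of $S$. I would then take $K=(S,\llex)$, a genuine suborder of $(T,\llex)$. Given any order preserving surjection $g\colon T\onto K$, a pressing-down (Fodor) argument should locate a stationary set of limit levels $\alpha$ at which the convex-fibre structure of $g$ is forced, cofinally below $\alpha$, to agree with $f$ on a tail of $C_\alpha$; but $f_\alpha$ was chosen so that such agreement makes the fibre over the relevant limit node either empty or non-convex, contradicting that $g$ is an order preserving surjection with convex fibres.

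The main obstacle, as in Moore's original minimality argument, is the delicate interface between an \emph{arbitrary} surjection and the ladder/uniformization data. For minimality one analyses injections, which are continuous at limits; here one must instead rule out all convex-fibre quotients, so the heart of the proof is showing that a surjection cannot evade the coding — that, cofinally below stationarily many limits, it must commit to exactly the pattern the uniformization forbids, while $K$ remains an honest suborder of $T$. Closing this dualised coherence argument, together with the routine but fiddly reduction realising a strongly surjective Aronszajn line as the node set of a single Hausdorff Aronszajn tree, is where the real work lies.
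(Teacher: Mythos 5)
Your reduction of the ``in particular'' clause is essentially the paper's: under CH every uncountable strongly surjective order is Aronszajn (the paper cites Corollary \ref{cor:onlyA}; your route through Corollary \ref{cor:noc} also works), and Baumgartner's theorem \cite[Theorem 4.2]{baum} realizes it as a lexicographically ordered Aronszajn tree. One caveat: that tree cannot be assumed Hausdorff, so axiom (A), which is stated for Hausdorff trees, does not apply to it directly; the paper needs a separate argument (Claim \ref{A+}) to drop the Hausdorff hypothesis, and your parenthetical ``(Hausdorff)'' quietly assumes away a real issue. Your choice of counterexample suborder does match the paper's Main Lemma \ref{new33}: apply (A) to get a pruned, downward closed $S\subseteq T$ with a uniformization $\varphi$, and take $K=S$.

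The genuine gap is the mechanism by which an arbitrary order preserving surjection $g:T\onto S$ is refuted. You propose to design the ladder colourings $f_\alpha$ in advance so as to ``obstruct both alternatives'' of a continuity/jump dichotomy, and then to catch $g$ by a pressing-down argument. This cannot work as stated: the colouring (and hence $S$ and $\varphi$) is fixed by a single application of (A) before $g$ is known, and it must simultaneously defeat all of the up to $2^{\aleph_1}$ candidate surjections; which pattern would embarrass a given $g$ depends on $g$ itself, and Fodor's lemma provides no way to predict the behaviour of $g$ at a limit level $\delta$ from $g\uhr T_{<\delta}$. The paper supplies exactly the two missing ingredients. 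First, an elementary submodel analysis (Claims \ref{clm0}, \ref{clm1} and \ref{clm3}): for countable $M\prec H(\aleph_2)$ with $g\in M$ and $\delta=M\cap \omg$, one can define \emph{from the trace $g\cap M$ alone} a cofinal branch $b(g,M)$ of $S_{<\delta}$ which has an upper bound in $S$; this is the true surjective counterpart of Moore's Lemma 3.3, and it is what makes the value of $\varphi$ on a tail of the ladder $C_\delta$ along $b(g,M)$ predictable (it must be $=^{*}f_\delta$, by uniformization at the upper bound). Second, Moore's guessing theorem \cite[Theorem 3.2]{onlymin}, a weak-diamond-style consequence of CH, which produces $g^{*}:\omg\to 2$ defeating the predictor $F$ that reads this eventual value off the trace $(g\cap M,\varphi\cap M)$; the colourings are then taken constant, $f_\delta\equiv g^{*}(\delta)$, and the contradiction is that $F$ of the trace equals $g^{*}(\delta)$ for every suitable $M$, while $g^{*}$ was chosen so that this fails for some $M$. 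Tellingly, your core argument never invokes CH, whereas Lemma \ref{new33} genuinely needs it --- the guessing step is precisely where CH enters --- so any CH-free completion along your lines would prove more than the paper does and should be viewed with suspicion.
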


The crux of Moore's result on minimal linear orders is \cite[Lemma 3.3]{onlymin}: CH + (A) implies that no (Hausdorff) Aronszajn tree $T$ is club-embeddable into all its downward closed, pruned subtrees. Our Lemma \ref{new33} is the surjective counterpart of \cite[Lemma 3.3]{onlymin} and essentially the content of Theorem \ref{nonexthm}.

\begin{mlemma}\label{new33}
 CH + (A) implies that there is no lexicographically ordered Aronszajn tree $T$ which can be mapped onto all of its downward closed, pruned subtrees $S$ in a lex-order preserving way. 
\end{mlemma}

The main reason we need this new lemma is the following: we do not know how to get from a lexicographic surjection to a tree embedding on a club subset; otherwise, we could have applied \cite[Lemma 3.3]{onlymin} to prove our Theorem \ref{nonexthm}. Furthermore, \cite{onlymin} deals with Hausdorff trees only while we cannot make this assumption now.

\medskip

First, let us show why Lemma \ref{new33} implies Theorem \ref{nonexthm}:

\begin{proof}[of Theorem \ref{nonexthm}]
Take any model of CH + (A) e.g. \cite[Theorem 1.9]{onlymin}. Any uncountable, strongly surjective linear order $L$ must be an Aronszajn line by Corollary \ref{cor:onlyA}  and CH. Any A-line is isomorphic to a lexicographically ordered Aronszajn tree by \cite[Theorem 4.2]{baum}. In turn, Lemma \ref{new33} implies that $L$ cannot be strongly surjective if  CH together with (A) holds.
\end{proof}

Now, we proceed to prove Lemma \ref{new33} which will be done through a sequence of claims. We will prove the following through  Claim \ref{clm0}, \ref{clm1} and \ref{clm3}: given a countable elementary submodel $M\prec  H(\aleph_2)$ and a  map $f\in M$ so that $f:T\onto S$ and $S\subseteq T$ are lexicographically ordered Aronszajn trees, one can define \emph{an unbounded branch} $b(f,M)$ of $S\cap M$ which \emph{has an upper bound in $S$}  using solely  $f^M:=f\cap M$.

Let us use the notation $\tri$ for $<_{\lex}\setm <_T$ for a lexicographically ordered tree $T$.


\begin{tclaim}\label{clm0}
 Suppose that $S\in M\prec H(\aleph_2)$ where $S$ is a lexicographically ordered Aronszajn tree. If $s\in S\cap M$ and $w\in S\setm M$ then there is $w'\in S\cap M$ so that $s\tri w'\tri w$ if $s\tri w$ and $w\tri w'\tri s$ if $w\tri  s$. 
\end{tclaim}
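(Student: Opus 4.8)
The plan is to exploit the density of the lexicographic order together with elementarity. Since $S$ is a lexicographically ordered Aronszajn tree, between any two distinct points of $S$ there are $S$-points witnessing the lexicographic betweenness, because every node $\textmd{br}(t)$ is isomorphic to $\mathbb Q$ and is therefore densely ordered without endpoints. The key observation is that $w \in S \setminus M$ while $s \in S \cap M$, so I want to produce a \emph{parameter-free} description (relative to $M$) of an element of $S \cap M$ that is forced to land strictly between $s$ and $w$.

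First I would fix the case $s \tri w$ (the case $w \tri s$ is symmetric). Let $\xi$ be the minimal height at which $s \uhr \xi \neq w \uhr \xi$; then by definition of $<_{\lex}$ we have $s \uhr \xi \tri w \uhr \xi$ inside the rational-ordered branching set $\textmd{br}(s \uhr \zeta)$ at the common splitting node, where $\zeta + 1 = \xi$ or $\xi$ is a limit. Here I must be slightly careful because $T$ is not assumed Hausdorff, so branching can occur at limit levels; but in all cases there is a common node $t = s \uhr \zeta$ below the split and two distinct immediate-style branches in $(\textmd{br}(t \text{ at level } \xi), \tri) \cong \mathbb Q$ separating $s$ and $w$. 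The point $s \uhr \xi$ lies in $M$ (as $s \in M$ and heights below $\xi$ are countable ordinals in $M$), so I can work inside $M$ from the node $s \uhr \xi$.

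The heart of the argument is then the following elementarity move. The set $U = \{ u \in S : s \tri u \}$, which is the lexicographic upper part above $s$, is definable from $s \in M$ and $S \in M$, hence $U \in M$. Since $w \in U \setminus M$, $U$ is nonempty, and by elementarity $M$ sees that $U$ has no $\tri$-least element — indeed an Aronszajn tree lexicographically ordered has no jumps of this kind, as I can slip a point of the densely-ordered branching set strictly below any given $u \in U$ while staying $\tri$-above $s$. Concretely, consider the rational branching set at the splitting level of $s$ and $w$: it is dense, so $M \models$ "there exists $w' \in S$ with $s \tri w' \tri w''$" for the appropriate $M$-definable bound $w''$, and I choose $w''$ to be a node of $S \cap M$ lying $\tri$-strictly between $s$ and $w$ in that rational set. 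The critical step is verifying that such a $w'' \in S \cap M$ actually satisfies $w'' \tri w$ and not merely $w'' \tri w''$: this follows because $w''$ and $w$ differ already at level $\xi$ in the same rational branching set, and I pick $w''$ to lie $\tri$-strictly below the branch-direction of $w$ there, which is an $M$-definable choice since the relevant $\mathbb Q$-copy and the direction of $s$ are in $M$.

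I expect the main obstacle to be the non-Hausdorff subtlety: because branching may happen at limit levels, the "first point of disagreement" between $s$ and $w$ need not be an immediate successor of a common node, and I must argue that even in the limit-branching case the separating branching set $(\textmd{br}(\cdot),\tri) \cong \mathbb Q$ is still dense and lies in $M$, so that a strictly-between element of $S \cap M$ can be extracted. The cleanest way to handle this uniformly is to phrase the whole argument as: "$M \models$ \emph{between any two elements of $S$ in the $\tri$ order there is a third}", which is a true $\Pi$-statement about $S$ since $S$ is densely lex-ordered, apply it with the one endpoint $s \in M$ and an $M$-definable approximation to the cut determined by $w$, and then transfer back via $M \prec H(\aleph_2)$. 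Once density-in-$M$ is established, choosing $w'$ and checking $s \tri w' \tri w$ is routine.
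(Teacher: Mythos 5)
There is a genuine gap, and it sits in the very first step of your plan: you assume that the lexicographic order of $S$ is dense because ``every $\textmd{br}(t)$ is isomorphic to $\mb Q$''. That hypothesis belongs to the \emph{doubly ordered} trees constructed under $\diamondsuit^+$ in Section \ref{sec:diamond}; it is not part of Claim \ref{clm0}. In Section \ref{sec:nouncountble}, a lexicographically ordered Aronszajn tree is an arbitrary Aronszajn tree whose branching sets carry arbitrary countable linear orders --- these trees arise from Baumgartner's representation of Aronszajn lines \cite[Theorem 4.2]{baum}, so one has no control over the branching; it may, for instance, be $2$-branching everywhere. For such a tree the sentence you want $M$ to reflect, ``between any two $\tri$-comparable elements of $S$ there is a third,'' is simply false: if $s$ and $w$ are tree-siblings at a binary split, then every other node either extends one of them (hence is tree-comparable to it, so not $\tri$-related) or splits off below them (hence is not $<_{\lex}$-between them), so \emph{nothing} lies $\tri$-between $s$ and $w$. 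The claim survives for such pairs only because the unique sibling of $s\in M$ is definable from $s$ and $S$, hence lies in $M$; in other words, the hypothesis $w\notin M$ must do real mathematical work, whereas your argument uses it only to conclude that $\{u\in S: s\tri u\}\setminus M\neq\emptyset$. Since the density statement is false, there is nothing for elementarity to transfer, and no choice of ``$M$-definable approximation to the cut determined by $w$'' can repair this: the claim is genuinely not a reflected density fact.

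For comparison, the paper's proof avoids density entirely. It first uses that levels of an Aronszajn tree are countable, so that $w\uhr\vareps$ lies in $M$ for every $\vareps\in M\cap\omg$ (a countable set belonging to $M$ is a subset of $M$); this is the correct substitute for your approximation to the cut of $w$. It then runs a dichotomy on the lex-intervals $Z_{w\uhr\vareps}=\{t\in S: w\uhr\vareps_0\llex t\llex w\uhr\vareps\}$, all of which lie in $M$: if some such interval is uncountable for $\vareps\in M\cap \omg$, then by elementarity it contains an element of $M$ avoiding the countably many $<_T$-predecessors of $w\uhr\vareps$, and any such element is the desired $w'$; if all of them are countable, then the set of $r>_T w\uhr\vareps_0$ with $Z_r$ countable is uncountable and all of its proper $<_{\lex}$-initial segments are countable, so it contains a copy of $\omg$ --- contradicting the fact that the lexicographic order of an Aronszajn tree has no uncountable well-ordered suborder. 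Your argument would be correct (and simpler than the paper's) for the $\oo$-branching, $\mb Q$-branching trees of Theorem \ref{suslinthm}, but Claim \ref{clm0} must hold for the general trees produced by Baumgartner's theorem, and there the dichotomy, or something playing its role, is unavoidable.
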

\begin{proof} We prove when $s\tri  w$; the other case is completely symmetric. First, note that $s\tri w$ implies  $s\tri w\uhr \vareps_0$ for some $\vareps_0\in \omg \cap M$. Now, let $$Z_r=\{t\in S:w\uhr \vareps_0\llex t\llex r\}.$$ Note that $Z_r\in M$ whenever $r\in M$. Furthermore, if there is some $\vareps\in M\cap \omg$ above $\vareps_0$ so that $Z_{w\uhr \vareps}$ is uncountable then there is some $w'\in Z_{w\uhr \vareps}$ so that $w'\tri w\uhr \vareps$; indeed, $w\uhr \vareps$ has only countably many $<_T$-predecessors. In turn, $s\tri w'\tri w$ as desired.

\begin{figure}[H]\centering
 \psscalebox{0.7} 
{
\begin{pspicture}(0,-2.537081)(8.884167,2.537081)
\psline[linecolor=black, linewidth=0.04, linestyle=dotted, dotsep=0.10583334cm](-0.12583344,1.6070809)(7.074167,1.6070809)
\psline[linecolor=black, linewidth=0.04, linestyle=dotted, dotsep=0.10583334cm](0.19416657,0.047080994)(0.57063717,0.047080994)(6.5941668,0.047080994)
\rput[bl](7.8741665,1.6070809){$M\cap \omg$}
\psbezier[linecolor=black, linewidth=0.04](1.8741666,2.407081)(1.4525295,1.4583977)(2.1408331,-1.992919)(3.4741666,-1.9929190063476563)(4.8075,-1.992919)(5.4958034,1.4583977)(5.074167,2.407081)
\rput[bl](7.4741664,0.0070809936){$\varepsilon_0\in M\cap \omg$}
\psline[linecolor=black, linewidth=0.04](3.4141665,-2.532919)(3.6141665,-1.592919)(3.2941666,-1.172919)(2.9741666,-0.412919)
\psline[linecolor=black, linewidth=0.04](3.3341665,-1.192919)(3.9141665,0.027080994)(3.4741666,1.067081)(3.9941666,2.1670809)
\psdots[linecolor=black, dotsize=0.16](4.0141664,2.187081)
\psdots[linecolor=black, dotsize=0.16](3.9141665,0.027080994)
\psdots[linecolor=black, dotsize=0.16](2.9741666,-0.412919)
\rput[bl](1.2341666,2.287081){$S$}
\rput[bl](3.4541667,2.247081){$w$}
\rput[bl](4,0.48){$w\uhr \vareps_0$}
\rput[bl](2.4341667,-0.372919){$s$}
\psline[linecolor=black, linewidth=0.04, linestyle=dashed, dash=0.17638889cm 0.10583334cm](3.0541666,0.807081)(3.7341666,0.447081)
\psdots[linecolor=black, dotsize=0.16](3.0341666,0.82708097)
\rput[bl](2.4141665,1.007081){$w'$}
\end{pspicture}
}
\caption{Finding $w'$ between $s$ and $w$}
\label{fig:w'}
\end{figure}
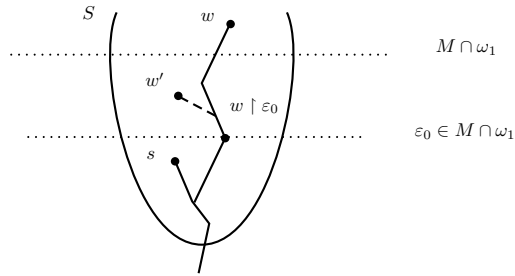

So, suppose that $Z_{w\uhr \vareps}$ is countable for all $\vareps\in M\cap \omg$ above $\vareps_0$. Hence, the set $$Z=\{r\in T:|Z_r|\leq \oo\text{ and }w\uhr \vareps_0<_T r\}$$ must be uncountable. However, for all $r,r'\in Z, $ $r\llex r'$ implies that $r\in Z_{r'}$ and so initial segments of $Z$ are countable (with respect to $<_{\lex}$). In turn, $Z$ contains a copy of $\omg$. This is a contradiction.

\end{proof}

\begin{tclaim}\label{clm1}
 Suppose $f\in M\prec H(\aleph_2)$ where $f:T\onto S$ and $S\subseteq T$ are lexicographically ordered Aronszajn trees; let $\delta=M\cap \omg$ and $h=f\uhr T_{<\delta}$. Then the following holds:
 \begin{enumerate}[(1)]
  \item  there is $u\in T_\delta$ so that  $h^{-1}(s)^\downarrow\cap u^\downarrow$ is $<_T$-bounded in $u^\downarrow$ for every $s\in S_{<\delta}$, and
  \item $f(u)\notin S_{<\delta}$ for any $u\in T_\delta$ that satisfies (1).
 \end{enumerate}

\end{tclaim}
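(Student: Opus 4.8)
The plan is to treat the two parts separately, using throughout the standard reflection facts that $T\cap M=T_{<\delta}$, $S\cap M=S_{<\delta}$, that every element of a countable set lying in $M$ again lies in $M$, and that $\min_{<_{\lex}}C\in M$ whenever $C\in M$ has a $<_{\lex}$-least element. I will also use repeatedly that $f$ is $<_{\lex}$-monotone, so that each fibre $f^{-1}(s)$ is $<_{\lex}$-convex: if $a<_{\lex}x<_{\lex}b$ and $f(a)=f(b)=s$, then $f(x)=s$. One more elementary observation drives everything: if $\Ht(v)<\delta$ and $v<_{\lex}u$ for $u\in T_\delta$, then $v$ meets $u^\downarrow$ at the node $v\wedge u$ of height $\eta<\delta$ and branches left there, so $v<_{\lex}u\uhr\xi$ for every $\xi>\eta$; and the same holds (with $v\wedge u=v$) when $v\le_T u$.

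For part (2) I would argue by contradiction: suppose $u\in T_\delta$ satisfies (1) yet $f(u)=s_0\in S_{<\delta}$. Applying (1) to $s_0$ yields $\xi_0<\delta$ with $u\uhr\xi\notin h^{-1}(s_0)^\downarrow$ for all $\xi\in[\xi_0,\delta)$; since $u\uhr\xi<_{\lex}u$ and $f$ is monotone, and since $f(u\uhr\xi)=s_0$ would put $u\uhr\xi$ into $h^{-1}(s_0)$, this forces $f(u\uhr\xi)<_{\lex}s_0$ for such $\xi$. Now I claim there is \emph{no} $v\in f^{-1}(s_0)$ with $v<_{\lex}u$. Indeed, if $\Ht(v)<\delta$ the observation above gives $v<_{\lex}u\uhr\xi$, hence $f(v)\le_{\lex}f(u\uhr\xi)<_{\lex}s_0$, for a suitable $\xi\in[\xi_0,\delta)$; and if $\Ht(v)\ge\delta$ then $\Ht(v\wedge u)<\delta$ still (a meet of height $\delta$ would force $v\ge_T u$, hence $v\ge_{\lex}u$), so the same inequality applies — either way $f(v)\ne s_0$. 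This is the contradiction: the fibre $C=f^{-1}(s_0)\in M$ is convex and contains $u$. If $C$ has a $<_{\lex}$-least element $m$, then $m\in M$, so $\Ht(m)<\delta$, $m\in h^{-1}(s_0)$, and $m\le_{\lex}u$, contradicting the claim; if $C$ has no least element, then $u$ is not least, so some $v\in C$ has $v<_{\lex}u$, contradicting it again. Hence $f(u)\notin S_{<\delta}$.

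For part (1) I would combine surjectivity (to anchor a node at level $\delta$) with convexity (to escape each fibre). Fix $s^*\in S_\delta$; since $s^*\notin M$, reflection gives $f^{-1}(s^*)\cap M=\emptyset$, i.e. $f^{-1}(s^*)\subseteq T_{\ge\delta}$, so every preimage of $s^*$ reaches level $\delta$; fix $p\in f^{-1}(s^*)$. Enumerating $S_{<\delta}=\{s_n:n<\oo\}$, I would build a branch that, for each $n$, reaches a node $t$ with no $T_{<\delta}$-extension mapping to $s_n$ (so that $u^\downarrow\cap h^{-1}(s_n)^\downarrow$ is bounded). The mechanism: escape above $t$ fails only if $f^{-1}(s_n)$ is tree-dense above $t$, and convexity of the fibre then pins $f$ to the constant value $s_n$ on a $<_{\lex}$-interval containing an entire sub-cone; the minimal nodes of these trapping regions form a countable antichain across all $n$. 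The recursion steers the branch clear of every trapping region — keeping escape available at each stage — while one tries to keep it cofinal in $p^\downarrow$ so as to be realized by an actual node $u\in T_\delta$.

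The step I expect to be the main obstacle is precisely this realization in (1). Because every level $T_\delta$ of an Aronszajn tree is \emph{countable}, $u$ cannot be produced by any cardinality or Baire-category argument inside $T_\delta$, and — since we are working with possibly non-Hausdorff $T$ — a cofinal branch of $T_{<\delta}$ manufactured by escaping fusion need not be realized by any node at level $\delta$ at all. The genuine content is therefore to carry out the escape recursion while \emph{simultaneously} forcing the branch to be the predecessor branch of a real $u\in T_\delta$; this is where surjectivity (the anchor $p$ lying above level $\delta$) and the convexity/constancy-sub-cone analysis of the fibres must be interleaved rather than applied in isolation, and it is also where the general (non-dense-branching) tree structure must be handled with care, since the step "convex fibre dense above $t$ forces a constant sub-cone" is delicate outside the $\oo$-branching setting.
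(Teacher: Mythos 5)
Your proposal for part (1) does not reach a proof, and you have correctly identified where it stalls --- but the missing idea is not a more clever fusion argument. The realization problem ("a cofinal branch of $T_{<\delta}$ built by recursion need not have an upper bound in $T_\delta$") dissolves once you notice that your anchor already \emph{is} the answer: the paper takes $w\in S_\delta$, any $t\in f^{-1}(w)$ (which has height $\geq\delta$, since otherwise $t\in M$ would force $w=f(t)\in M$), and simply sets $u=t\uhr\delta\in T_\delta$. There is no enumeration of $S_{<\delta}$, no steering, and no branch to realize; instead one proves directly that this single $u$ bounds $h^{-1}(s)^\downarrow\cap u^\downarrow$ for \emph{every} $s\in S_{<\delta}$ at once. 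The proof is a lex-order argument: if $v_n\in h^{-1}(s)$ had $u_n\in v_n^\downarrow\cap u^\downarrow$ with heights unbounded in $\delta$, then either $v_n\llex t\llex v_k$ for some $n,k$ (whence $f(t)=s$, contradicting $f(t)=w\in S_\delta$), or $t$ lies lex-above all $v_n$ or lex-below all $v_n$; in the latter two cases Claim \ref{clm0} supplies $w'\in S_{<\delta}$ strictly lex-between $s$ and $w$, and a preimage $t'\in T_{<\delta}$ of $w'$ (which exists by elementarity) is then forced to sit on the wrong side of some $v_n$, contradicting monotonicity of $f$. So the separation claim \ref{clm0} and monotonicity replace your entire escape recursion; your interleaving of "surjectivity anchor" and "convexity of fibres" is the right instinct, but you never extract from the anchor the fact that the branch is already determined.

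Your part (2) also has a genuine gap, precisely at the point you dismiss parenthetically: for $v$ with $\Ht(v)\geq\delta$ you claim the meet $v\wedge u$ has height $<\delta$, "since a meet of height $\delta$ would force $v\geq_T u$". This uses Hausdorffness, which the paper explicitly cannot assume in this section (the trees come from \cite[Theorem 4.2]{baum} and may branch at limit levels). In a non-Hausdorff tree there can be $v\in f^{-1}(s_0)$ with $v\uhr\xi=u\uhr\xi$ for all $\xi<\delta$ yet $v\uhr\delta\neq u$ and $v\uhr\delta\tri u$: then $v\llex u$, no level $\xi<\delta$ separates them, and in fact $u\uhr\xi\llex v$ for every $\xi<\delta$, so your key inequality $v\llex u\uhr\xi$ fails and your Case B (fibre with no $<_{\lex}$-least element) is unproven --- your Case A survives only because there the witness happens to lie in $T_{<\delta}$. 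The paper's proof of (2) sidesteps the lexicographic order entirely: if $f(u)=s\in S_{<\delta}$, then for each $\vareps<\delta$ the statement "$\exists v\,(f(v)=s \wedge u\uhr\vareps<_T v)$" holds in $H(\aleph_2)$ (witnessed by $u$) and has all parameters $f,s,u\uhr\vareps$ in $M$, so elementarity yields $v\in T_{<\delta}$ with $f(v)=s$ and $u\uhr\vareps<_T v$; hence $h^{-1}(s)^\downarrow\cap u^\downarrow$ is unbounded in $u^\downarrow$, contradicting (1) directly. You should adopt that argument; it is both shorter and immune to limit-level branching.
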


The downward closure of a set $A\subseteq T$ will be denoted by  $A^\downarrow$,  that is: $A^\downarrow=\{t\in T: t\leq_T s$ for some $s\in A\}$.

\begin{proof} (1) Take $w\in S_\delta$ and $t\in f^{-1}(w)$; note that $t\notin T_{<\delta}$. Let  $u\in T_\delta\cap t^{\downarrow}$; we will show that $u$ works i.e. $h^{-1}(s)^\downarrow$ is $<_T$-bounded below $u$ for every $s\in S_{<\delta}$.

Fix $s\in S_{<\delta}$ and suppose that $v_n\in T_{<\delta}\cap h^{-1}(s)$ and $u_n\in v_n^\downarrow\cap u^\downarrow$ so that $\sup \{\Ht(u_n):n\in \oo\}=\delta$. We will reach a contradiction.

First, if $v_n<_{\lex}t<_{\lex} v_k$ for some $n\neq k<\oo$ then $f(t)=s\in S_{<\delta}$ which is a contradiction to $f(t)=w\in S_\delta$.

So, let us suppose that $t<_{\lex}v_n$ for all $n$. Hence $t\tri v_n$ so $f(t)=w<_{\lex} f(v_n)=s$ and so $w\tri s$ also holds. Let us pick some $w'\in S_{<\delta}$ so that $w<_{\lex} w'<_{\lex} s$; this can be done by Claim \ref{clm0}.

\begin{figure}[H]\centering
 \psscalebox{0.7} 
{
\begin{pspicture}(0,-2.9941273)(15.02002,2.9941273)
\psline[linecolor=black, linewidth=0.04, linestyle=dotted, dotsep=0.10583334cm](-0.12583344,1.3541272)(7.074167,1.3541272)
\psline[linecolor=black, linewidth=0.04](0.27416655,2.1541271)(1.4741665,-2.6458728)(5.4741664,-2.6458728)(6.6741667,2.1541271)
\psline[linecolor=black, linewidth=0.04](8.254167,2.1541271)(9.454166,-2.6458728)(13.454166,-2.6458728)(14.654166,2.1541271)
\psline[linecolor=black, linewidth=0.04, linestyle=dotted, dotsep=0.10583334cm](7.8541665,1.3541272)(15.054167,1.3541272)
\psbezier[linecolor=black, linewidth=0.04](9.854167,2.1541271)(9.432529,1.2054439)(10.120833,-2.2458727)(11.454166,-2.2458728408813475)(12.787499,-2.2458727)(13.475803,1.2054439)(13.054167,2.1541271)
\rput[bl](9.214167,2.0341272){$S$}
\psline[linecolor=black, linewidth=0.04, linestyle=dashed, dash=0.17638889cm 0.10583334cm, arrowsize=0.05291667cm 2.0,arrowlength=1.4,arrowinset=0.0]{->}(6.2741666,-0.64587283)(8.274166,-0.64587283)
\rput[bl](7.074167,-0.24587284){$f$}
\psline[linecolor=black, linewidth=0.04](2.6741667,-2.9858727)(2.0941665,-1.7058729)(2.7341666,-0.40587285)(1.9741665,0.7741272)(2.3741665,2.0341272)(2.0741665,2.7741272)
\psdots[linecolor=black, dotsize=0.16](2.0741665,2.7941272)
\psdots[linecolor=black, dotsize=0.16](2.1541665,1.3541272)
\psdots[linecolor=black, dotsize=0.16](11.254167,1.3941271)
\psline[linecolor=black, linewidth=0.04](11.254167,1.3741271)(11.534166,0.39412716)(11.194166,-0.86587286)(11.534166,-2.8658729)
\psdots[linecolor=black, dotsize=0.16](2.2341666,0.37412715)
\psdots[linecolor=black, dotsize=0.16](2.7541666,-0.38587284)
\psdots[linecolor=black, dotsize=0.16](2.2741666,-2.085873)
\psline[linecolor=black, linewidth=0.04](2.2341666,0.41412717)(2.5941665,0.9141272)
\psline[linecolor=black, linewidth=0.04](2.7541666,-0.40587285)(3.1541665,0.17412716)
\psline[linecolor=black, linewidth=0.04](2.2941666,-2.1058729)(3.3141665,-1.3658729)
\psdots[linecolor=black, dotsize=0.16](3.3141665,-1.3658729)
\psdots[linecolor=black, dotsize=0.16](3.1741667,0.17412716)
\psdots[linecolor=black, dotsize=0.16](2.5941665,0.89412713)
\rput[bl](2.1141665,-0.46587285){$u_n$}
\rput[bl](3.5741665,0.29412717){$v_n$}
\rput[bl](3.7141666,-1.4258728){$v_0$}
\rput[bl](1.5741665,-2.1658728){$u_0$}
\rput[bl](2.7741666,1.6941272){$u$}
\rput[bl](2.5541666,2.7741272){$t$}
\rput[bl](10.714167,1.7341272){$f(t)=w$}
\psline[linecolor=black, linewidth=0.04](11.334167,-1.6058729)(12.234166,-0.40587285)
\psdots[linecolor=black, dotsize=0.16](12.254167,-0.42587283)
\rput[bl](12.474167,-0.025872841){$s$}
\psline[linecolor=black, linewidth=0.04](11.354167,-0.30587283)(12.034166,0.47412717)
\psdots[linecolor=black, dotsize=0.16](12.054167,0.45412716)
\rput[bl](12.294167,0.81412715){$w'$}
\psline[linecolor=black, linewidth=0.04, linestyle=dashed, dash=0.17638889cm 0.10583334cm](2.4141665,-1.0258728)(3.0741665,-0.9058728)(3.3141665,-0.5858728)
\psdots[linecolor=black, dotsize=0.16](3.3141665,-0.62587285)
\rput[bl](3.7341666,-0.46587285){$t$}
\end{pspicture}
}
\caption{Proof of Claim \ref{clm1}}
\label{fig:clm1}
\end{figure}


Let $t'\in h^{-1}(w')\cap T_{<\delta}$ and note that $t<_{\lex}t'$ and so $t\tri t'$ since $t\notin T_{<\delta}$. So $u_n<_{\lex} t'$ and hence $v_n<_{\lex} t'$ for some $n$. But this implies that $f(v_n)=s\leq_{\lex} f(t')=w'$ which is a contradiction.

Hence, $v_n<_{\lex}t$ for all $n<\oo$ so $s<_{\lex} w$. Again, by Claim \ref{clm0}, we can find $w'\in S_{<\delta}$ so that $s<_{\lex}w'<_{\lex} w$. Pick any $t'\in h^{-1}(w')\cap T_{<\delta}$ and note that $t'\tri t$. In turn, there must be some $n$ so that $t'<_{\lex} v_n$. However, this implies that $w'\leq_{\lex} s$, a contradiction.

(2)  This is standard using elementarity: suppose that $s=f(u)\in S_{<\delta}$ and $\vareps<\delta$. Then $H(\aleph_2)\models ``f(v)=s$ for some $v\in T$ with $u\uhr \vareps<_T v$''. So $M$ must also satisfy this sentence i.e. there is $v\in T_{<\delta}$ so that  $f(v)=s$ and $u\uhr \vareps<_T v$. However, this contradicts the assumption that the downward closure of $h^{-1}(s)$ is $<_T$-bounded below $u$.

\end{proof}

At this point, we can  define $u\in T_\delta$ with  $f(u)\in S\setm S_{<\delta}$ only using $f^M=f\cap M$; this definition can be done uniquely using a well order $\prec$ of $H(\aleph_1)$. That is, given $f\in M$ as above, we let $u=u(f,M)$ be the $\prec$-minimal element of $T=\dom(f)$ which satisfies the requirements of Claim \ref{clm1} (1) i.e. $(f\uhr M)^{-1}(s)$ is $<_T$-bounded below $u$ for every $s\in \ran(f)\cap M=\ran(f\cap M)$.

\begin{tclaim}\label{clm3} Let  $f\in M\prec H(\aleph_2)$ where $f:T\onto S$ and $S\subseteq T$ are lexicographically ordered Aronszajn trees; let $\delta=M\cap \omg$ and $h=f\uhr T_{<\delta}$.  Suppose that $u\in T_\delta$ and  $f(u)\in S\setm S_{<\delta}$. Then $$f(u)^\downarrow \cap S_{<\delta}=\bigcap\{([h(u\uhr \vareps),h(v)]_{<_{\lex}}\cap S_{<\delta})^\downarrow :\vareps<\delta,u<_{\lex}v\in T_{<\delta}\}.$$
\end{tclaim}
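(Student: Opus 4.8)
The plan is to prove the two inclusions separately, writing $I_{\varepsilon,v}=[f(u\uhr\varepsilon),f(v)]_{<_{\lex}}$ for the intervals on the right and abbreviating $s_\varepsilon=h(u\uhr\varepsilon)=f(u\uhr\varepsilon)$ (note $u\uhr\varepsilon\in T_{<\delta}$, so $h$ is defined there). I will lean on a few elementary features of a lex-ordered tree: each cone $t^{\uparrow}$ is $<_{\lex}$-convex with greatest lower bound $t$; two $<_T$-incomparable nodes are separated in $<_{\lex}$ by nodes at their splitting level, of which there are densely many since every $\textmd{br}(\cdot)\cong\mb Q$; and the $<_T$-predecessors of a node converge to it from the left. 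Since $f$ is monotone while $s_\varepsilon\in S_{<\delta}\not\ni f(u)$, an equality $s_\varepsilon=f(u)$ is impossible, so $s_\varepsilon<_{\lex}f(u)$ \emph{strictly} for every $\varepsilon<\delta$. Finally I record the purely order-theoretic fact that there is \emph{no} $z\in T$ with $u\uhr\varepsilon<_{\lex}z<_{\lex}u$ for all $\varepsilon<\delta$: such a $z$ is either a proper initial segment of $u$ or splits off the branch of $u$ below $\delta$, and in both cases $z<_{\lex}u\uhr\varepsilon$ for all large enough $\varepsilon$.

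For $f(u)^\downarrow\cap S_{<\delta}\subseteq\textmd{RHS}$, fix $p<_T f(u)$ with $\Ht(p)<\delta$ and an arbitrary admissible pair $\varepsilon<\delta$, $u<_{\lex}v\in T_{<\delta}$. The predecessors $f(u)\uhr\zeta$ with $\Ht(p)\le\zeta<\delta$ form a $<_{\lex}$-increasing chain of nodes $\ge_T p$, whose supremum is the node $w$ of $f(u)$'s branch at level $\delta$ (with $w=f(u)$ if $\Ht(f(u))=\delta$). Here $s_\varepsilon<_{\lex}w$: otherwise, since $\Ht(s_\varepsilon)<\delta=\Ht(w)$, the node $s_\varepsilon$ would split to the right of $f(u)$'s branch below $\delta$, giving $s_\varepsilon>_{\lex}f(u)$, contradicting the previous paragraph. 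Hence some $q=f(u)\uhr\zeta$ with $\zeta<\delta$ satisfies $s_\varepsilon<_{\lex}q<_{\lex}f(u)\le_{\lex}f(v)$, so $q\in I_{\varepsilon,v}\cap S_{<\delta}$ and $p\le_T q$ witnesses $p\in(I_{\varepsilon,v}\cap S_{<\delta})^{\downarrow}$. This inclusion is thus pure order theory, using neither surjectivity nor elementarity.

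The reverse inclusion is the substantive one; I argue contrapositively. Suppose $p\in S_{<\delta}$ is $<_T$-incomparable with $f(u)$ — the only alternative to $p<_T f(u)$, by the height gap — and let $\xi<\Ht(p)<\delta$ be the level at which $p$ and $f(u)$ split. I will exhibit one admissible pair for which $p^{\uparrow}\cup\{p\}$ misses $I_{\varepsilon,v}$. If $p<_{\lex}f(u)$, choose by density of $\textmd{br}$ at level $\xi$ nodes $c\tri c'$ strictly between $p\uhr\xi$ and $f(u)\uhr\xi$, so every element of $p^{\uparrow}\cup\{p\}$ is $<_{\lex}c<_{\lex}c'<_{\lex}f(u)$. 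If no $\varepsilon$ had $c\le_{\lex}s_\varepsilon$, then $c'$ would be strictly between every $s_\varepsilon$ and $f(u)$, and \emph{any} $f$-preimage of $c'$ would be a forbidden $z$ with $u\uhr\varepsilon<_{\lex}z<_{\lex}u$ for all $\varepsilon$; so some $\varepsilon$ has $c\le_{\lex}s_\varepsilon$, and this $\varepsilon$ with any admissible $v$ pushes the left endpoint of $I_{\varepsilon,v}$ past $p^{\uparrow}\cup\{p\}$. This step is exactly the content of $\sup_\varepsilon s_\varepsilon=f(u)$ and is where surjectivity enters. If instead $f(u)<_{\lex}p$, I pick a node $c$ at level $\xi$ with $f(u)\uhr\xi\tri c\tri p\uhr\xi$, so $f(u)<_{\lex}c<_{\lex}p$ and $c<_{\lex}t$ for all $t\in p^{\uparrow}$; since $c\in S_{<\delta}=S\cap M$ and $f\in M$ is onto, $c$ has a preimage $v\in T\cap M=T_{<\delta}$, and $f(v)=c>_{\lex}f(u)$ forces $u<_{\lex}v$, so $v$ is admissible. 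Then for every $\varepsilon$ the right endpoint $f(v)=c$ of $I_{\varepsilon,v}$ lies $<_{\lex}$-below all of $p^{\uparrow}\cup\{p\}$, so $p\notin(I_{\varepsilon,v}\cap S_{<\delta})^{\downarrow}$, completing the contrapositive.

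Two loose ends: admissible pairs exist, so the intersection is not vacuous — branch nodes $u\uhr\varepsilon$ are available for every $\varepsilon<\delta$, and a preimage of a right-sibling of some $f(u)\uhr\eta$ gives a $v\in T_{<\delta}$ with $u<_{\lex}v$ — and the separating nodes $c,c'$ exist solely because each $\textmd{br}(\cdot)$ is densely ordered. The step I expect to be most delicate is the case $f(u)<_{\lex}p$: here the natural left/right symmetry breaks down, because the nodes approaching $u$ from the right form its cone, which lies entirely above level $\delta$, so one cannot approximate $f(u)$ from the right by images of low nodes of $u$'s branch. The resolution is to replace the convergence $\sup_\varepsilon s_\varepsilon=f(u)$ (available on the left from surjectivity) by an elementarity/reflection argument that pulls the separating node $c\in S_{<\delta}$ back to a low preimage $v\in T_{<\delta}$; getting this asymmetry right is the crux of the proof.
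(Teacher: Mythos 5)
Your left-to-right inclusion is correct and is actually \emph{more} elementary than the paper's own argument: the paper pulls the witness $z=f(u)$ back into $M$ by elementarity, whereas you use the predecessors $f(u)\uhr \zeta$, $\zeta<\delta$, as witnesses, needing only monotonicity of $f$. The genuine gap is in the right-to-left direction, exactly where you invoke ``density of $\textmd{br}(\cdot)$''. The claim concerns arbitrary lexicographically ordered Aronszajn trees $S\subseteq T$: the hypothesis $(\textmd{br}(t),\tri)\cong\mb Q$ belongs to the doubly ordered trees of Section \ref{sec:diamond}, not to Section \ref{sec:nouncountble}, where $T$ arises from an arbitrary Aronszajn line via Baumgartner's representation theorem and $S$ is an arbitrary subtree. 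So your separating nodes $c\tri c'$ at the splitting level need not exist at all; and even if $T$ did branch densely, your argument requires $c'$ (case $p<_{\lex}f(u)$) and $c$ (case $f(u)<_{\lex}p$) to lie in $S=\ran(f)$ --- you assert ``$c\in S_{<\delta}$'' with no justification --- and a node of $T$ lex-between two nodes of $S$ has no reason to belong to $S$. This is precisely the role of the paper's Claim \ref{clm0}, which uses elementarity together with the Aronszajn property (no copy of $\omg$ inside the line) to produce an element of $S\cap M$ strictly $\tri$-between any $s\in S\cap M$ and $w\in S\setminus M$. Your proposal has no substitute for that claim, and it is the real content of the direction you yourself call the substantive one; dense branching is not a ``purely combinatorial'' stand-in but an extra hypothesis you are not entitled to.

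A second, related problem: your ``purely order-theoretic fact'' (no $z\in T$ with $u\uhr\vareps<_{\lex}z<_{\lex}u$ for all $\vareps<\delta$) is false for non-Hausdorff trees, which this section explicitly must allow (the paper stresses that the Hausdorff assumption of \cite{onlymin} cannot be made here): any node $z\neq u$ of height $\geq\delta$ whose predecessors below $\delta$ agree with those of $u$ and which lies $\tri$-below $u$ is a counterexample. The fact is true when restricted to $z\in T_{<\delta}$, but in your first case you apply it to ``any $f$-preimage of $c'$'', which need not have height $<\delta$. To repair this you must take $c'\in S\cap M$ and pull its preimage into $T\cap M=T_{<\delta}$ by elementarity, as you do in your second case --- which again forces the separating node to come from $S\cap M$, i.e., from Claim \ref{clm0}. (The same issue infects your ``loose end'' about admissible pairs: a right-sibling of $f(u)\uhr\eta$ need not exist, and if it does it need not lie in $S$.)
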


Here, $[h(u\uhr \vareps),h(v)]_{<_{\lex}}$ stands for all the $t\in T$ such that $h(u\uhr \vareps)\leq_{\lex} t\leq_{\lex} h(v)$.

\begin{proof}
            First, suppose that $w\in f(u)^\downarrow \cap S_{<\delta}$. Take $\vareps<\delta$ and $u<_{\lex}v\in T_{<\delta}$. Since $u\uhr \vareps <_{\lex} u$, $f(u\uhr \vareps)<_{\lex} f(u)<_{\lex} f(v)$ holds. So $H(\aleph_2)\models ``w\in z^\downarrow$ for some $z\in [f(u\uhr \vareps),f(v)]_{<_{\lex}}\cap S$''; indeed $z=f(u)$ satisfies this. So there must be some $z\in [f(u\uhr \vareps),f(v)]_{<_{\lex}}\cap S\cap M$ such that $w\in z^\downarrow$. In turn, $w\in ([h(u\uhr \vareps),h(v)]_{<_{\lex}}\cap S_{<\delta})^\downarrow$ as desired.
            
            Second, suppose that $w\in S_{<\delta}\setm f(u)^\downarrow$. We distinguish two cases: first, we consider if $w<_{\lex} f(u)$ and so $w\tri f(u)$.

            \begin{figure}[H]\centering
             \psscalebox{0.7} 
{
\begin{pspicture}(0,-2.7770655)(15.02002,2.7770655)
\psline[linecolor=black, linewidth=0.04, linestyle=dotted, dotsep=0.10583334cm](-0.12583344,1.6670656)(7.074167,1.6670656)
\psline[linecolor=black, linewidth=0.04](0.27416655,2.4670656)(1.4741665,-2.3329344)(5.4741664,-2.3329344)(6.6741667,2.4670656)
\rput[bl](9.834167,2.5270655){$S$}
\psbezier[linecolor=black, linewidth=0.04](9.354167,2.4670656)(8.800768,1.5183823)(9.620998,-1.8129344)(11.454166,-1.9329344177246093)(13.287334,-2.0529344)(14.107565,1.5183823)(13.554167,2.4670656)
\psline[linecolor=black, linewidth=0.04, linestyle=dotted, dotsep=0.10583334cm](7.8541665,1.6670656)(15.054167,1.6670656)
\psline[linecolor=black, linewidth=0.04](8.254167,2.4670656)(9.454166,-2.3329344)(13.454166,-2.3329344)(14.654166,2.4670656)
\psline[linecolor=black, linewidth=0.04, linestyle=dashed, dash=0.17638889cm 0.10583334cm, arrowsize=0.05291667cm 2.0,arrowlength=1.4,arrowinset=0.0]{->}(6.2741666,-0.3329344)(8.274166,-0.3329344)
\rput[bl](7.074167,0.06706558){$f$}
\psline[linecolor=black, linewidth=0.04](4.0941668,2.5270655)(3.3741665,0.8870656)(3.7741666,-0.9329344)(3.3341665,-2.7129345)
\psdots[linecolor=black, dotsize=0.16](4.1141667,2.5070655)
\rput[bl](2.8141665,2.5270655){$u$}
\psline[linecolor=black, linewidth=0.04](3.6141665,-1.6329345)(2.4341667,-1.1329345)(2.0741665,-0.2729344)
\psline[linecolor=black, linewidth=0.04](3.6541665,-0.29293442)(2.8341665,0.36706558)
\psdots[linecolor=black, dotsize=0.16](2.1141665,-0.3329344)
\psdots[linecolor=black, dotsize=0.16](2.8541665,0.3270656)
\rput[bl](1.6741666,-0.05293442){$x$}
\rput[bl](2.3741665,0.7070656){$y$}
\psline[linecolor=black, linewidth=0.04](11.134167,-0.3329344)(11.574166,0.8870656)
\rput[bl](9.514167,0.54706556){$h(x)=w$}
\psline[linecolor=black, linewidth=0.04](12.934167,2.3670657)(12.054167,0.46706557)(12.434167,-0.9729344)(12.054167,-2.7729344)
\psdots[linecolor=black, dotsize=0.16](12.954166,2.3470657)
\rput[bl](11.8741665,2.2670655){$f(u)$}
\psline[linecolor=black, linewidth=0.04](12.334167,-1.5529344)(11.314167,-0.8529344)(10.934167,0.3270656)
\psdots[linecolor=black, dotsize=0.16](10.934167,0.3270656)
\psdots[linecolor=black, dotsize=0.16](11.594167,0.8870656)
\rput[bl](10.614166,1.1270655){$h(y)=z$}
\end{pspicture}
}\caption{Proof of Claim \ref{clm3}}
\label{fig:clm3}
            \end{figure}

            Find some $z\in S_{<\delta}$ so that $w\tri z \tri f(u)$ holds (this can be done by Claim \ref{clm0}). If $h(x)=w$ and $h(y)=z$ for some $x,y\in T_{<\delta}$ then $x<_{\lex}y<_{\lex} u$ so $y<_{\lex} u\uhr \vareps$ for some large enough $\vareps<\delta$. The main point is that $w\notin t^\downarrow$ if $f(u\uhr \vareps)\leq_{\lex} t$. Indeed, $y<_{\lex} u\uhr \vareps$ implies that $z\leq_{\lex} f(u\uhr \vareps) \leq_{\lex} t$ so $w\tri t$ i.e. $w\notin t^\downarrow$. Hence, we found an $\vareps$ so that $w\notin ([h(u\uhr \vareps),h(v)]_{<_{\lex}}\cap S_{<\delta})^\downarrow$ for any $u<_{\lex}v\in T_{<\delta}$.
            
            The second case, when $f(u)<_{\lex} w$ is rather similar: find $z\in S_{<\delta}$ so that $f(u)\tri z\tri w$ and let $x,y\in T_{<\delta}$ so that $h(x)=w,h(y)=z$. Then $u\tri y<_{\lex} x$. Observe that $w\notin t^\downarrow$ for any $t\leq_{\lex} z=h(y)$. Hence   $w\notin ([h(u\uhr \vareps),h(y)]_{<_{\lex}}\cap S_{<\delta})^\downarrow$ for any $\vareps<\delta$.
            
           \end{proof}

 In summary, given $f\in M$ as above, we can define an unbounded branch $b(f,M)$ of $S_{<\delta}$ with an upper bound in $S$  using only  $f^M$ as follows: we set $$b(f,M)=f(u(f,M))^\downarrow\cap S_{<\delta}.$$ Claim \ref{clm1} (2) says that this is really unbounded in $S_{<\delta}$ while Claim \ref{clm3}  shows that $b(f,M)$ is definable from $f^M$.

\smallskip

Finally, we need the following

\begin{tclaim}\label{A+}
 (A) implies that any ladder system colouring $f_\alpha:C_\alpha\to \oo$  for $\alpha\in \lim(\omg)$ has a $T$-uniformization  for any Aronszajn tree $T$.
\end{tclaim}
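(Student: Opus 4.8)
The plan is to reduce the general case to the Hausdorff case, which is precisely what axiom (A) delivers. Given an arbitrary (possibly non-Hausdorff) Aronszajn tree $T$, I would first pass to its \emph{Hausdorffification}: let $\hat T$ be the tree obtained by identifying, at each limit level $\alpha$, any two nodes of $T_\alpha$ that have the same set of strict predecessors (i.e.\ the same branch below them), while leaving the zeroth and successor levels untouched, and let $\pi:T\to \hat T$ be the resulting projection. By construction $\pi$ is level-preserving and order-preserving, it is injective on every non-limit level, and it commutes with restriction in the sense that $\pi(t)\uhr \xi=\pi(t\uhr \xi)$ for all $\xi<\Ht(t)$ (both are the unique predecessor of $\pi(t)$ at level $\xi$). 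Distinct nodes of $\hat T$ at a limit level have distinct branches below them by definition, so $\hat T$ is Hausdorff.

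The first thing to verify is that $\hat T$ is again Aronszajn, so that (A) applies. Its levels are countable since $|\hat T_\alpha|\le |T_\alpha|$, and it has height $\omg$. For the absence of uncountable branches, observe that a cofinal branch of $\hat T$ meets each successor level $\xi$ in a node of the form $\pi(t_\xi)$ for a unique $t_\xi\in T_\xi$; because $\pi$ is injective on successor levels and order-preserving, these $t_\xi$ form a cofinal chain in $T$, contradicting that $T$ is Aronszajn. Hence I may apply axiom (A) to the Hausdorff Aronszajn tree $\hat T$ with the given ladder system $(C_\alpha)_{\alpha\in\lim(\omg)}$ and colourings $f_\alpha$, obtaining a downward closed, pruned $\hat S\subseteq \hat T$ together with a $\hat T$-uniformization $\hat f:\hat S\to \oo$.

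Next I would pull $\hat S$ and $\hat f$ back to $T$. Set $S_0=\pi^{-1}(\hat S)$; since $\hat S$ is downward closed and $\pi$ is order-preserving, $S_0$ is downward closed in $T$. This naive preimage need not be pruned, because all successors of a collapsed node of $\hat S$ may happen to lie above a single twin in $T$; so I thin $S_0$ to $S=\{t\in S_0: t^{\uparrow}\cap S_0\text{ is uncountable}\}$. This $S$ is still downward closed, and it is pruned by the standard argument: fix $t\in S$ and $\beta<\omg$; if $t$ had no successor in $S$ of height $\ge \beta$, then every node of $S_0$ above $t$ of height $\ge\beta$ would lie above one of the countably many level-$\beta$ nodes of $S_0$ above $t$, each of which, not belonging to $S$, has only countably many $S_0$-successors, whence $t^{\uparrow}\cap S_0$ would be countable, a contradiction. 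Thus $t$ has successors in $S$ at cofinally many levels, hence uncountably many. (Here one uses only that $S_0$ has countable levels and no uncountable branch, being a subtree of $T$.)

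Finally, define $f:=\hat f\circ\pi\uhr S$, which is well defined since $S\subseteq S_0=\pi^{-1}(\hat S)$ and $\hat f$ is defined on all of $\hat S$. To see that $f$ is a $T$-uniformization, fix $u\in S$ of limit height $\alpha$ and $\xi<\alpha$. Then $u\uhr \xi\in S$ by downward closure, and, using that $\pi$ commutes with restriction, $f(u\uhr \xi)=\hat f(\pi(u\uhr \xi))=\hat f(\pi(u)\uhr \xi)$; since $\pi(u)\in\hat S$ has limit height $\alpha$ and $\hat f$ uniformizes, the right-hand side equals $f_\alpha(\xi)$ for almost all $\xi\in C_\alpha$, as required. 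The main obstacle throughout is the non-Hausdorff phenomenon itself: one must check that collapsing twins creates no new cofinal branch (so that (A) is applicable to $\hat T$) and that the preimage can be re-pruned without harming the uniformization. The latter is safe precisely because the uniformization condition constrains $f$ only along the branches below limit nodes, and these branches are preserved both by $\pi$ and by passage to any downward closed subtree.
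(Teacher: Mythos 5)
Your proof is correct, and it reaches the Hausdorff case by the construction dual to the paper's. The paper makes $T$ Hausdorff by \emph{inserting} nodes: every bounded limit chain of $T$ receives a unique minimal upper bound, producing a Hausdorff tree $\tilde T\supseteq T$ in which the original limit-level nodes of $T$ sit at \emph{successor} levels; accordingly the ladder colourings must be re-indexed ($D_\alpha=\{\xi+1:\xi\in C_\alpha\}$, $g_\alpha(\xi+1)=f_\alpha(\xi)$) before (A) is applied, but the pull-back is then just a restriction, $S=\tilde S\cap T$ and $\varphi=\tilde\varphi\uhr S$, with no pruning issue to repair. You instead make $T$ Hausdorff by \emph{collapsing} limit-level twins, so your projection $\pi$ is level-preserving, the original data $(C_\alpha,f_\alpha)$ can be fed to (A) unchanged, and the uniformization transfers verbatim because $\pi$ commutes with restriction; the price is that $\pi^{-1}(\hat S)$ need not be pruned, which you correctly repair by the standard re-pruning argument. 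In short, your route trades the paper's level-shifting bookkeeping for a pruning lemma; both are sound. Two points in your write-up deserve an explicit line rather than a gloss: first, that $\hat T$ is Hausdorff is not literally ``by definition'' under your one-pass identification --- one needs the remark that if two limit-level nodes of $T$ are \emph{not} identified, i.e.\ differ at some level $\beta$ below, then they already differ at the successor level $\beta+1$ (distinct nodes at the same level have disjoint sets of successors), where $\pi$ is injective, so their images have distinct predecessor sets in $\hat T$; second, $S$ should be noted to be nonempty, which holds since $S_0=\pi^{-1}(\hat S)$ is uncountable with countable levels. Both are immediate, and the rest of your argument (Aronszajnness of $\hat T$ via injectivity on successor levels, and the verification of the uniformization property) is complete.
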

That is, the assumption of $T$ being Hausdorff can be dropped from the definition of axiom (A). Given $\varphi:S\to \oo$ and $u\in S$ we let $\varphi[u]:\Ht(u)\to \oo$ defined by $\varphi[u](\xi)=\varphi(u\uhr \xi)$.
 \begin{proof}
  Suppose that $f_\alpha:C_\alpha\to \oo$ ($\alpha\in \lim(\omg)$) is a ladder system colouring and $T$ is an Aronszajn tree. Construct a Hausdorff tree $\tilde T$ from $T$ by inserting new, unique smallest upper bounds for bounded chains of limit length of $T$. Note that $T$ and $\tilde T$ can be uniquely recovered from one another. Let $D_\alpha=\{\xi+1:\xi\in C_\alpha\}$ and $g_\alpha:D_\alpha\to \oo$ by $g_\alpha(\xi+1)=f_\alpha(\xi)$.
  
  Now, there is a uniformization $\tilde \varphi:\tilde S\to \oo$ of $(g_\alpha)_{\alpha\in \omg}$ where $\tilde S\subseteq \tilde T$ is downward closed and pruned. Let $S=\tilde S\cap T$ and define  $\varphi:S\to \oo$ by $\varphi=\tilde \varphi\uhr S$. 
  
  If $\delta\in \lim(\omg)$ and $u\in S_\delta$ then there is $\tilde u\in \tilde S_\delta$ so that $v<_T u$ implies $v<_{\tilde T}\tilde u<_{\tilde T}u$. So $\tilde \varphi[\tilde u]\uhr D_\delta=^*g_\delta$ i.e. $\tilde \varphi(\tilde u\uhr \xi+1)=f_\delta(\xi)$ for almost all $\xi\in C_\delta$. However, note that $(\tilde u\uhr \xi+1)_{\tilde T}=(u\uhr \xi)_T$ for all $\oo\leq \xi<\delta$. So $\varphi(u\uhr \xi)=f_\delta(\xi)$ for almost all $\xi\in C_\delta$.
  
  
 \end{proof}

We are ready to prove our Lemma \ref{new33} which, given the above work, will be very similar to the original proof of   \cite[Lemma 3.3 ]{onlymin}.

\begin{proof}[of Lemma \ref{new33}] Assume that $T$ is a lexicographically ordered Aronszajn tree. We will find a subtree $S$ of $T$ so that there is no $f:T\onto S$ which preserves the lexicographic order.

Our first step is to define a map $F:H(\aleph_1)\to 2$. Fix  an arbitrary ladder system $(C_\alpha)_{\alpha\in \lim(\omg)}$. Suppose that $\mc U=(f,\varphi)$ where $f:T\onto S$ and $\varphi:S\to 2$; furthermore, let $\mc U\in M\prec H(\aleph_2)$ and $\mc U^M=(\varphi\cap M,f\cap M)$. Now, we define $F(\mc U^M)=i$ iff for all but finitely many $\xi\in C_\delta$, $\varphi[b](\xi)=i$ where $b=b(f,M)$ is the cofinal branch in $S_{<\delta}$ with an upper bound in $S$ defined above (see Claim \ref{clm3}). We set $F$ to be 0 on all other elements of $H(\aleph_1)$.

 
 Now,  \cite[Theorem 3.2]{onlymin} says that there is a $g:\omg\to 2$ so that for every $\mc U\in H(\aleph_2)$ there is a countable elementary $M\prec H(\aleph_2)$ so that $g(\omg^M)\neq F(\mc U^M)$ (where $\omg^M=\omg\cap M$). Let us define $f_\alpha:C_\alpha\to 2$ constant $g(\alpha)$. By (A) and Claim \ref{A+}, there is some (pruned, downward closed)  subtree $S$ of $ T$ and uniformization $\varphi: S\to 2$ of $(f_\alpha)_{\alpha\in \lim(\omg)}$. 
 
We claim that there is no $f:T\onto S$. Otherwise, we set  $\mc U=(f,\varphi)$ and claim that $F(\mc U^M)=g(\delta)$ for all $\mc U\in M\prec H(\aleph_2)$ and $\delta=\omg\cap M$; this would contradict the choice of $g$. So fix $M$. The fact that the branch $b=b(f,M)$ of $S_{<\delta}$ has an upper bound in $S$ implies that $\varphi[b]\uhr C_\delta=^*f_\delta=g(\delta)$ since $\varphi$ was a uniformization. In turn, we defined $F(\mc U^M)=g(\delta)$. 
\end{proof}

\section{Open problems}

Regarding suborders of $\mb R$ and the results of Section \ref{sec:real}, the following is very natural:

\begin{con}
 $\textmd{MA}_{\aleph_1}$ does not imply the existence of uncountable, strongly surjective real suborders.
\end{con}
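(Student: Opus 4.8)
The plan is to realize the conjecture in the model of \cite{ARSh} in which $\textmd{MA}_{\aleph_1}$ holds and, moreover, every uncountable set of reals contains an $\aleph_1$-dense, $2$-entangled subset. In such a model I would try to prove the stronger statement that no uncountable $L\subseteq \mb R$ is strongly surjective at all, thereby strengthening the Corollary above (that a $2$-entangled order is not strongly surjective) to ``$L$ does not even \emph{contain} a $2$-entangled subset''. So suppose towards a contradiction that $L\subseteq \mb R$ is uncountable and strongly surjective, fix an $\aleph_1$-dense, $2$-entangled $E\subseteq L$, and split it as $E=E_0\sqcup E_1$ with both pieces $\aleph_1$-dense. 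Applying strong surjectivity to $K=E_1$ gives an order preserving surjection $f:L\onto E_1$.

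The core of the argument is the following dichotomy applied to $f\uhr E_0$. For every $e\in E_0$ we have $f(e)\in E_1$, and since $E_0\cap E_1=\emptyset$ this forces $f(e)\neq e$; put $E_0^+=\{e\in E_0:e<f(e)\}$ and $E_0^-=\{e\in E_0:f(e)<e\}$, so one of them, say $E_0^+$, is uncountable. In the \emph{spread-out case}, where $f[E_0^+]$ is uncountable, I choose for uncountably many values $v$ a single $e_v\in E_0^+$ with $f(e_v)=v$. The pairs $(e_v,f(e_v))$ lie in $E\times E$, have first coordinate in $E_0$ and second in $E_1$, and have pairwise distinct coordinates, hence form an uncountable pairwise-disjoint family of pairs from $E$. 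Feeding them into the $2$-entanglement of $E$ with the pattern $\vareps(0)=0,\ \vareps(1)=1$ produces indices with $e_v<e_{v'}$ but $f(e_v)>f(e_{v'})$, contradicting that $f$ is order preserving. Hence $f[E_0^+]$ (and symmetrically $f[E_0^-]$) must be \emph{countable}, i.e. $f$ collapses $E_0$ onto countably many points.

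\emph{The collapse case is the main obstacle.} Now $E_0$ is covered by countably many fibres $f^{-1}(v)$, each a convex subset of $L$; since $E_0$ is $\aleph_1$-dense, some fibre contains two points $p<q$ of $E_0$ and hence a nondegenerate interval $I=(p,q)_L$, and as $E_1$ is $\aleph_1$-dense, $I$ contains uncountably many points of $E_1$ all mapped by $f$ to a single value. The natural pairs $(e,f(e))$ now degenerate to a constant second coordinate, so the entanglement of $E$ no longer applies directly. To repair this I would try to build an \emph{increasing} configuration out of the folding: for uncountably many $e\in E_1\cap I$ the fibres $f^{-1}(e)$ are disjoint convex sets lying on one side of $I$ and ordered as the $e$'s, so any choice $r_e\in f^{-1}(e)$ yields a strictly order preserving assignment $e\mapsto r_e$, and if both $e$ and $r_e$ lay in a common $2$-entangled set the perfectly increasing family $(e,r_e)$ would again contradict entanglement. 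The difficulty — and, I believe, the reason the statement is presently only a conjecture — is exactly that one cannot guarantee $r_e\in E$: the fibre $f^{-1}(e)$ may be a singleton avoiding $E$, and passing to a transversal copy $E_1^{*}=\{r_e:e\in E_1\}$ (which is order isomorphic to $E_1$, hence $2$-entangled) does not help, since the entangled subset handed to us by the model inside $E_1\cup E_1^{*}$ may lie entirely on one side and so miss the cross-pairs $(e,r_e)$.

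To push past this obstacle I would explore three routes: (a) strengthen the model so that it produces not merely $2$-entangled but \emph{fully} entangled sets of every finite arity, and use a more flexible $3$- or $4$-ary pairing that survives the collapse; (b) exploit the convex-collapse structure to relocate the surjection $L\onto E_1$ to a genuinely smaller strongly surjective suborder sitting inside a single collapsed interval and run an induction on a suitable rank; or (c) replace the ``contains a $2$-entangled set'' property by a partition-style hypothesis that directly controls order preserving maps on pairs $(x,f(x))$. Among these I expect the interplay between collapsing fibres and the one-sidedness of entangled subsets to be the decisive technical hurdle.
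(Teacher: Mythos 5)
The statement you set out to prove is not a theorem of the paper: it appears in the final section as an open \emph{conjecture}, and the paper offers no proof of it --- only the motivation that in the Abraham--Rubin--Shelah-type model, $\textmd{MA}_{\aleph_1}$ holds while every uncountable set of reals contains an $\aleph_1$-dense, $2$-entangled subset, together with the Corollary (proved in Section 2) that no uncountable strongly surjective linear order can itself be $2$-entangled. Your proposal follows exactly this intended route, and your ``spread-out case'' is sound: it is essentially the paper's own argument for that Corollary (extracting an uncountable family of pairs $(a_\alpha,b_\alpha=f(a_\alpha))$ with distinct entries and contradicting $2$-entanglement via the pattern $\vareps(0)=0,\vareps(1)=1$), relativized to an entangled subset $E$ of $L$ rather than to $L$ itself.

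However, your proposal is not a proof, and you say so yourself: the ``collapse case'' is a genuine gap. When $f$ folds uncountably many points of $E_1$ into single convex fibres, the pairs you can manufacture have one coordinate $r_e$ that need not lie in $E$, and handing the set $E_1\cup E_1^{*}$ to the model only produces an entangled subset that may sit entirely inside one of the two columns, giving no control over the cross-pairs $(e,r_e)$ --- exactly as you observe. This is precisely the obstruction the paper itself records: immediately before stating the conjecture the authors write that they do not know whether a strongly surjective linear order can \emph{contain} a $2$-entangled set. In other words, the paper's Corollary rules out $L$ being $2$-entangled, but whether containment of an entangled set already suffices is open, and closing that gap is the entire content of the conjecture. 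So your diagnosis of the difficulty is accurate and agrees with the authors' own assessment, but none of your proposed routes (a)--(c) is carried out, and the statement remains unproven both in your proposal and in the paper.
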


\begin{prob}
 Does every uncountable (real) strongly surjective order contain a minimal suborder?
\end{prob}

\begin{prob}
Is every short, homogeneous and minimal (real) linear order strongly surjective?
\end{prob}

Section \ref{sec:allAronszajn} motivated the next question:

\begin{prob}
 Are there any uncountable strongly surjective linear orders in the Cohen-or other canonical models (Sacks, Miller, etc.)?
\end{prob}



      \begin{prob}
Could there be a strongly surjective $L$ of size $\mf c>\aleph_1$?
\end{prob}

In particular, we ask for a short linear order of size $\mf c>\aleph_1$ without real suborders of size $\mf c$ (by Corollary \ref{cor:noc}). Such linear orders can be constructed, say with $\mf c=\aleph_{\omg}$; we only sketch the proof. 

\begin{prop}
 Suppose that $V\models CH$. Then, in the model $V^{\mb C_{\aleph_{\omg}}}$, $\mf c=\aleph_{\omg}$ and there is a short linear order of size $\mf c$ without real suborders of size $\mf c$.
\end{prop}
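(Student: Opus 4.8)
The plan is to reduce everything to one robust ingredient --- a family of ``higher Aronszajn types'' --- and then assemble them along an Aronszajn line, exactly in the spirit of the sums $\sum_{x\in X}K_x$ used in Section \ref{sec:real}. First I would record the cardinal arithmetic: since $V\models CH$, the poset $\mb C_{\aleph_\omega}$ has a dense subset of size $\aleph_\omega$ and is ccc, so it preserves cardinals and forces $\mf c=2^{\aleph_0}=\aleph_\omega$. Thus it suffices to produce, in $V^{\mb C_{\aleph_\omega}}$, a short order $L$ with $|L|=\aleph_\omega$ all of whose separable (equivalently, real) suborders are \emph{countable}; this is more than enough to exclude real suborders of size $\mf c$.

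The key reduction is the following. Suppose that for every $1\le n<\oo$ we are given a short order $M_n$ without endpoints with $|M_n|=\aleph_n$ whose real suborders are all countable (a ``size-$\aleph_n$ Aronszajn type''). Fix an Aronszajn line $R$ (these exist in ZFC; its real suborders are countable) and, using the sum notation from the introduction, choose fibers $K_\alpha$ so that for each $n$ the set $\{\alpha\in R:K_\alpha\cong M_n\}$ is dense in $R$, and set $L=\sum_{\alpha\in R}K_\alpha$. Then $|L|=\aleph_1\cdot\sup_n\aleph_n=\aleph_\omega$. I would then verify the two defining properties of $L$ by the pattern already used in Section \ref{sec:real}. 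For shortness: an order-embedding of $\omg$ into $L$ projects (via the fiber map) to a nondecreasing map $\omg\to R$, whose image is a well-ordered, hence countable, subset of the short order $R$; over each value the sequence stays in a single short fiber $M_n$ and so meets it countably, forcing $\omg$ to be a countable union of countable sets --- a contradiction. The symmetric argument with $\omg^*$ rules out reverse well-orders, so $L$ is short. For the real suborders: if $X\subseteq L$ is separable then it embeds into $\mb R$, so every suborder of $X$ is separable; picking one point of $X$ from each nonempty fiber gives a suborder of $X$ isomorphic to the support $A=\{\alpha:X\cap K_\alpha\ne\emptyset\}\subseteq R$, whence $A$ is countable, while each $X\cap K_\alpha$ is a separable suborder of $M_{n(\alpha)}$, hence countable by the choice of $M_n$. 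Therefore $X=\bigcup_{\alpha\in A}(X\cap K_\alpha)$ is countable, and in particular $L$ has no real suborder of size $\mf c$.

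The hard part --- and the only place where the ambient Cohen model is genuinely used --- is manufacturing the fibers $M_n$ for $n\ge 2$: short orders of size $\aleph_n$ with no uncountable real suborder. The singularity of $\aleph_\omega$ is exactly the source of the difficulty, and it also explains why the reduction cannot be cheapened. Any attempt to reach size $\aleph_\omega$ by summing \emph{real} pieces (for instance entangled sets) whose sizes range over all the $\aleph_n$ is defeated, because a single countable index set already meets cofinally many of those sizes and thereby reassembles a separable suborder of full size $\aleph_\omega$; for the same reason a single entangled set of size $\aleph_\omega$ (which is short but real) does not work. This forces the fibers to be honest Aronszajn types rather than real orders, and that is precisely the content one must extract from the model.

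I expect this last step to be the main obstacle. The approach I would take is to build each $M_n$ from the combinatorics that $\mb C_{\aleph_\omega}$ provides over the $\mathrm{GCH}$ ground model --- the square/coherent sequences inherited from $V$ together with the mutually generic Cohen reals --- to obtain a higher Countryman-type line of size $\aleph_n$, and then argue that mutual genericity destroys every potential uncountable separable suborder. Verifying that the resulting order is short and has only countable real suborders at the cardinal $\aleph_n$ is the delicate point, and it is for this reason that I would only \emph{sketch} the full argument here, treating the existence of the size-$\aleph_n$ Aronszajn types as the technical heart on which the clean assembly above rests.
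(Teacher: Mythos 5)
Your cardinal arithmetic and your assembly over an Aronszajn line are fine as far as they go, but the objects you reduce everything to provably do not exist, so the proposal cannot be completed. A ``size-$\aleph_n$ Aronszajn type'' --- a short linear order of size $\aleph_n\geq \aleph_2$ all of whose real (separable) suborders are countable --- is impossible in ZFC, in the Cohen model or anywhere else. Indeed, any short linear order $L$ with no uncountable real suborder has $|L|\leq\aleph_1$: build a $2$-branching partition tree $T$ for $L$ exactly as in the proof of Corollary \ref{cor:small}. Shortness forces every chain of $T$ to be countable (from a strictly decreasing $\omg$-chain of convex sets one extracts a monotone $\omg$-sequence in $L$), so every node lies on a countable level. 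If some level were uncountable, let $\gamma_0$ be the least such; then $T_{<\gamma_0}$ is countable, and picking one point from each convex set in $T_{\gamma_0}$ gives an uncountable suborder of $L$ that order-embeds into the lexicographically ordered branch space of the countable tree $T_{<\gamma_0}$, and that branch space embeds into $\mb R$ (it has a countable weakly dense set and only countably many jumps, since distinct jumps determine distinct pairs of splitting nodes). This contradicts the hypothesis on $L$; hence all levels are countable and $|L|\leq\aleph_1$. (The same fact is implicit in the paper's appeal to Baumgartner's theorem in the proof of Theorem \ref{nonexthm}: every Aronszajn type is a lexicographically ordered Aronszajn \emph{tree}, hence has size $\aleph_1$.) So your intermediate goal --- a short order of size $\aleph_\omega$ with \emph{all} separable suborders countable --- is a strictly stronger statement than the proposition and is simply false; the fibers $M_n$, $n\geq 2$, which you defer as ``the technical heart,'' cannot be manufactured by any combinatorics.

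The dilemma you yourself point out (countably many separable fibers of sizes cofinal in $\aleph_\oo$ reassemble into a separable suborder of size $\mf c$, because a countable sum of separable orders is separable) is genuine, but it cannot be escaped by making the fibers free of uncountable real suborders; it must be escaped by abandoning the sum decomposition altogether, and that is what the paper does. The paper's witness is the lexicographic order of a single tree of height $\omg$, of size $\mf c$, with \emph{no} uncountable chains, whose levels are separable but of size $<\mf c$, built by adding mutually generic Cohen branches level by level. Such an order necessarily contains real suborders of every size $<\mf c$ (the levels) --- by the theorem above this is unavoidable --- and genericity is used only to exclude real suborders of size exactly $\mf c$: a countable dense set has all its elements below some countable level $\delta$, which caps any separable suborder at roughly $|T_{<\delta}|<\mf c$, and the absence of $\aleph_1$-chains (guaranteed generically) gives shortness. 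If you insist on your fiber-by-fiber scheme, the requirement that can actually be met is that each $M_n$ have all real suborders of size $\leq\aleph_1$ \emph{uniformly} (then a countable support bounds $|X|$ by $\aleph_1$); but producing such $M_n$ already requires exactly the paper's tree-with-generic-branches construction, so the reduction does not bypass the core difficulty --- it multiplies it.
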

\begin{proof}
$L$ is defined as the lexicographic order on a tree $T$ with the following properties: $T$ has height $\omg$, size $\mf c$ and levels of size $<\mf c$. Furthermore, each level $T_\alpha$ is separable and $T$ has no $\aleph_1$ chains; these properties ensure that the linear order is short and has no separable suborders of size $\mf c$.

Now, to construct $T$ using a generic  $G\subseteq {\mb C_{\aleph_{\omg}}}$, we inductively define $T_\alpha$ for $\alpha<\omg$. Given $T_{<\alpha}$ we select a cofinal copy $S_\alpha$ of $2^{<\oo}$ from $T_{<\alpha}$ and use $G\uhr [\omega_\alpha,\oo_\alpha+\oo_\alpha)$ to find $\aleph_\alpha$-many generic branches through $S$. These branches give $T_\alpha$. 

The only non trivial property to check is that there are no $\aleph_1$ chains in $T$. However, note that there is a club $C\subseteq \omg$ so that if $\alpha\in C$ and $b\in V[G\uhr \oo_\alpha]$ is a cofinal branch through $T_{<\alpha}$ then $b$ has no upper bound in $T_\alpha$; this follows from genericity. In turn, there could be no $\aleph_1$-chains.
\end{proof}

On the other hand, the following  holds.

\begin{prop}
 Suppose  that $\aleph_1<\mf c$ and $$\cf([\lambda]^\oo,\subseteq)<\cf(\mf c)$$ for all $\lambda<\mf c$. Then any short linear order $L$ of size $\mf c$ contains a real suborder of size $\mf c$.
\end{prop}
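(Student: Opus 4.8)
The plan is to reduce everything to producing a single separable suborder of full size $\mf c$, with a covering argument governed by the hypothesis as the engine. First observe that the hypothesis already forces $\cf(\mf c)\geq\aleph_2$: applying it with $\lambda=\aleph_1<\mf c$ and using $\cf([\aleph_1]^\oo,\subseteq)=\aleph_1$ gives $\aleph_1<\cf(\mf c)$. Next decompose $L$ by its \emph{separable condensation}: set $x\sim y$ iff $[x,y]$ embeds into $\mb R$. Since $L$ is short, every subset bounded on one side has countable cofinality (resp.\ coinitiality) from that side, so each $\sim$-class is a countable nested union of real suborders and is therefore itself a real suborder; thus $L=\sum_{i\in\bar L}L_i$ where $\bar L=L/\!\sim$ is again short and every nondegenerate interval of $\bar L$ is non-separable. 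If some block $L_i$ has size $\mf c$ we are done, so assume all blocks are smaller; then $\sum_i|L_i|=\mf c$ forces $|\bar L|\geq\cf(\mf c)$.

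The core is the following covering principle, valid for any short $K$ of size $\mf c$ admitting a dense subset $E$ with $|E|=\lambda<\mf c$. Because $K$ is short, each $x\in K$ is the supremum of a countable subset of $E$ and the infimum of a countable subset of $E$ (adding immediate neighbours when present), so the cut that $x$ determines in $E$ is generated by a countable set $s_x\in[E]^\oo$; and as $E$ is dense, distinct points of $K$ determine distinct cuts of $E$. Fix a cofinal family $\mc F\subseteq[E]^\oo$ of size $\cf([\lambda]^\oo)<\cf(\mf c)$, and for $F\in\mc F$ let $K_F$ consist of those $x\in K$ whose cut in $E$ is generated within $F$. A direct check shows the fixed countable set $F$ is order-dense in $K_F$, so each $K_F$ is a real suborder. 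Since every $s_x$ lies in some $F\in\mc F$, the $K_F$ cover $K$, exhibiting $K$ as a union of fewer than $\cf(\mf c)$ real suborders; as $|K|=\mf c$, some $K_F$ has size $\mf c$, which is the desired real suborder.

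It remains to locate, inside $L$, a suborder of size $\mf c$ and density $<\mf c$, so that the covering principle applies; this is the step I expect to be the main obstacle, and it is where $\cf(\mf c)>\omg$ together with the full hypothesis must be used. The plan is to run the binary partition (interval) tree of $L$, whose height is at most $\omg$, since a strictly descending branch of intervals would yield a strictly monotone $\omg$-indexed sequence of endpoints, contradicting shortness. Call a node \emph{fat} if the convex set indexing it has size $\mf c$. Because a two-part split of a set of size $\mf c$ always leaves one part of size $\mf c$, every fat node has a fat child, so the fat nodes form a pruned subtree of height $\le\omg$ whose maximal branches die only at limit levels $\gamma<\omg$, where the intersection first drops below $\mf c$.

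Since there are at most $\omg<\cf(\mf c)$ levels, one then analyses, level by level, how the $\mf c$ points are distributed among the fat nodes, the surviving fat part having countable one-sided cofinalities at each stage and the shed points lying in non-fat nodes. Using $\cf([\lambda]^\oo)<\cf(\mf c)$ to pass, at the critical level, to a cofinal family of countable approximations of the relevant (size-$<\mf c$) index set, the goal is to extract a suborder of size $\mf c$ carried by a single fat node but of density $<\mf c$; equivalently, to show directly that ``every block is small'' is incompatible with $|\bar L|\geq\cf(\mf c)$ and $\bar L$ everywhere non-separable. Feeding the resulting small-density suborder into the covering principle of the second paragraph then yields a real suborder of size $\mf c$. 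The delicate point throughout is the singular case $\cf(\mf c)<\mf c$, where a union of exactly $\cf(\mf c)$ small separable pieces need not contain a large one, so the reduction must genuinely deliver \emph{fewer} than $\cf(\mf c)$ pieces, or a single fat block.
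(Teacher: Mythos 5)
Your covering principle in the second paragraph is correct, and it is genuinely parallel to the engine of the paper's proof (countable generation of cuts via shortness, a cofinal family in $[\lambda]^\oo$ of size $<\cf(\mf c)$, pigeonhole, and ``countably generated cut'' implies real type). The gap is exactly where you flag it: you never produce the input your principle needs, namely a suborder of size $\mf c$ admitting a \emph{dense set of points} of size $<\mf c$. Your third paragraph is a statement of intent (``the plan is\dots'', ``the goal is to extract\dots''), not an argument, and the missing step cannot be treated as a preliminary reduction: since every suborder of $\mb R$ has a countable order-dense subset, a size-$\mf c$ suborder of density $<\mf c$ exists \emph{if and only if} the theorem holds (the forward direction being your own covering principle), so finding one is the whole problem. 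Moreover, the auxiliary claim in your first paragraph that ``every nondegenerate interval of $\bar L$ is non-separable'' is false: for the double arrow $\mb R\times 2$ (which is short) the condensation classes are the pairs $\{(r,0),(r,1)\}$ and $\bar L\cong \mb R$, so all intervals of $\bar L$ are separable. What is true is only that the \emph{preimage in $L$} of a nondegenerate interval is non-separable. Hence the ``equivalent'' incompatibility you propose to prove at the end (``every block small'' versus ``$\bar L$ everywhere non-separable'') is not an available reformulation.

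The paper closes precisely this gap by never asking for a dense set of points: separation by convex sets suffices, and the partition tree hands you such a separating family whose per-point data is countable. Concretely: the $2$-branching partition tree $T$ of $L$ has height $\leq\omg$ and $|T|=\mf c$, so $\aleph_1<\cf(\mf c)$ gives a minimal level $\alpha<\omg$ with $|T_\alpha|=\mf c$, and then $|T_{<\alpha}|<\mf c$. Minimality forces $\alpha$ to be a limit ordinal (a successor level has at most twice the size of its predecessor), so distinct nodes of $T_\alpha$ determine distinct countable cofinal branches through $T_{<\alpha}$. Now run your pigeonhole on these branches: a cofinal family in $[T_{<\alpha}]^\oo$ of size $<\cf(\mf c)$ yields a single countable $S\subseteq T_{<\alpha}$ containing the branches of $\mf c$ many nodes of $T_\alpha$. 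A transversal $X$ of those nodes is separated by the countably many convex sets in $S$, and this already embeds $X$ into $\mb R$: the map sending $x$ to $\{t\in S:\ x\in t,\ \text{or every point of } t \text{ is } <x\}$ is strictly monotone from $X$ into a $\subseteq$-chain of subsets of the countable set $S$, and any such chain embeds into $\mb R$ (via $A\mapsto\sum_{n\in A}2^{-n}$ after enumerating $S$). So the role played by your dense set $E$ is taken over by a family of convex sets organized in a tree of countable height; that substitution --- density replaced by separation --- is the idea your sketch is missing, and it is what makes the argument go through even when $\mf c$ is singular.
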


The assumptions of the Proposition are satisfied if $\aleph_1<\mf c<\aleph_\oo$ since $\cf([\aleph_n]^\oo,\subseteq)=\aleph_n$.

\begin{proof}First, note that $\aleph_1<\cf(\mf c)$. Let $T$ be an everywhere 2-branching partition tree for $L$. Then $T$ has height $\leq \omg$ so there is a minimal $\alpha<\omg$ such that $T_\alpha$ has size $\mf c$. So $\lambda=|T_{<\alpha}|<\mf c$ and hence there is cofinal family in $[T_{<\alpha}]^\oo$ of size $<\cf(\mf c)$. Also, any element of $T_\alpha$ is given by a branch of a countable subset of  $T_{<\alpha}$. In particular, there is a single countable $S\in [T_{<\alpha}]^\oo$ so that there are $\mf c$ many branches through $S$ with upper bounds in $T_\alpha$. This gives a real suborder of size $\mf c$.
\end{proof}


\medskip

Also, the following question is open although it could be as hard as proving the consistency of Baumgartner's axiom for $\aleph_3$-dense sets of reals:

\begin{prob}
Construct strongly surjective orders of size $>\aleph_2$. Can a strongly surjective linear order have size $2^{\aleph_1}$?
\end{prob}

\begin{center}
 $\star$
\end{center}

The construction of  Section \ref{sec:diamond} raises the following question which was already asked by Baumgartner \cite{baum}:

\begin{prob}
 Does $\diamondsuit$ or the existence of a Suslin tree suffice to show that there is a minimal Aronszajn-type?
\end{prob}

\begin{center}
 $\star$
\end{center}
%
%

  J. Moore showed that, under PFA, uncountable linear orders have a 5 element basis: any uncountable linear order must embed $\omg,-\omg$, or an uncountable real order type, or a fixed Countryman line $C$ or its reverse $-C$. Also, already under $\textmd{MA}_{\aleph_1}$, there are minimal Countryman lines \cite[Theorem 2.2.5]{walks}. 
  
  So the next questions are rather natural:
  
  \begin{prob}
   Does $\textmd{MA}_{\aleph_1}$ imply that there is an uncountable strongly surjective linear order?
  \end{prob}

    \begin{prob} Can a strongly surjective linear order $L$ be a Countryman line i.e. is it possible that $L^2$ is the union of countably many chains?
     
    \end{prob}
    
\begin{center}
 $\star$
\end{center}

  Now, one can easily refine the notion of being strongly surjective by requiring the existence of maps for only a restricted class of suborders. Let us say that $L$ is \emph{surjective for the class $\mc K$} iff $L\onto K$ for any $K\in \mc K$ such that $K\subseteq L$. So $L$ is strongly surjective iff it is surjective for the class of all linear orders. We say that $L$ is $\kappa$-surjective iff $|L|\geq \kappa$ and $L$ is surjective for the class of all linear orders of size $\kappa$. 

Note that every cardinal $\kappa$ with its usual well order is  $\kappa$-surjective but not $\lambda$-surjective for $\lambda<\cf(\kappa)$. In particular, $\omg$ is $\aleph_1$-surjective but not strongly surjective.

\begin{prob}
  Is there a short, $\aleph_1$-surjective linear order $L$  which is not strongly surjective i.e. $L\not \onto \mb Q$?
\end{prob}

It is easy to see that a set of reals of size $\aleph_1$ is strongly surjective iff it is $\aleph_1$-surjective.

\begin{prob}
 Suppose that $L\subseteq \mb R$ is $\aleph_2$-surjective of size $\aleph_2$. Is $L$ strongly surjective?
\end{prob}

\begin{center}
 $\star$
\end{center}
    
    The following problems concern the question if strong surjectivity reflects: 
  
 \begin{prob} Suppose that $L$ is strongly surjective and $x\in L$. Is $L\setm \{x\}$ strongly surjective?
  \end{prob}

  \begin{prob}
Suppose that $\oo\leq \lambda<\kappa$ and $L$ is a strongly surjective linear order of size $\kappa$. Is there a strongly surjective suborder of $L$ of size $\lambda$?
  \end{prob}

  Yes, for $\lambda=\oo$ trivially (either $\oo$ or $-\oo$ embeds into $L$, and also $\mb Q$ embeds into any uncountable, short linear order by an old result of Hausdorff).

\begin{center}
 $\star$
\end{center}

Finally, about mixing the order types of strongly surjective orders, we ask:

\begin{prob}
 Is it consistent that there are real and Aronszajn strongly surjective linear orders at the same time?
\end{prob}

 \begin{prob}
   Is it consistent that there are uncountable, strongly surjective linear orders but each such order is separable?
  \end{prob}




\begin{thebibliography}{2}

\bibitem{abMA} Abraham, U., Shelah, A. "Martin's axiom does not imply that every two $\aleph _{1}$-dense sets of reals are isomorphic" Israel J Math 38 (1981) 161-176

\bibitem{ARSh} Abraham, Uri, Matatyahu Rubin, and Saharon Shelah. "On the consistency of some partition theorems for continuous colorings, and the structure of $\aleph_1$-dense real order types." Annals of Pure and Applied Logic 29.2 (1985): 123-206.


\bibitem{baumiso} Baumgartner, James E. "All $\aleph_1$-dense sets of reals can be isomorphic." Fundamenta Mathematicae 79.2 (1973): 101-106.

\bibitem{baum} Baumgartner, James E. "Order types of real numbers and other uncountable orderings." Ordered sets. Springer Netherlands, 1982. 239-277.


\bibitem{devlin} Devlin, Keith J.; Shelah, Saharon A weak version of $\diamondsuit$ which follows from $2^{\aleph_0}<2^{\aleph_1}$. Israel J. Math. 29 (1978), no. 2-3, 239–247.


\bibitem{osvaldo} O. Guzman, M. Hru\v s\'ak "Parametrized $\diamondsuit$-principles and canonical models." Slides from Retrospective workshop on Forcing and its applications, Fields Institute, 2015.

\bibitem{hns} Hajnal, A., Zs Nagy, and L. Soukup. "On the number of certain subgraphs of graphs without large cliques and independent subsets." A Tribute to Paul Erdos (1990): 223.

\bibitem{jech} Jech, T. "Set theory. The third millennium edition, revised and expanded." Springer Monographs in Mathematics. Springer-Verlag, Berlin, 2003. xiv+769 pp. ISBN: 3-540-44085-2 

\bibitem{onlymin} Moore, Justin Tatch. "$\omg$ and $-\omg$ may be the only minimal uncountable linear orders." Michigan Math. J 55.2 (2007): 437-457.

\bibitem{5basis} Moore, Justin Tatch. "A five element basis for the uncountable linear orders." Annals of Mathematics (2006): 669-688.


\bibitem{DHM} Moore, Justin, Michael Hrušák, and Mirna Džamonja. "Parametrized $\diamondsuit$ principles." Transactions of the American Mathematical Society 356.6 (2004): 2281-2306.


\bibitem{raphael0} Camerlo, Riccardo; Carroy, Raphael;  Marcone, Alberto. "Epimorphisms between linear orders." Order 32 (2015), 387-400. Erratum, Order 33 (2016), 187.

\bibitem{raphael} Camerlo, Riccardo; Carroy, Raphael;  Marcone, Alberto; "Linear orders: when embeddability and epimorphism agree." preprint, arXiv:1701.02020


\bibitem{stevotrees} Todorcevic, Stevo. "Trees and linearly ordered sets." Handbook of set-theoretic topology (1984): 235-293.

\bibitem{walks} Todorcevic, Stevo. "Walks on ordinals and their characteristics." Vol. 263. Springer Science \& Business Media, 2007.


\end{thebibliography}
\end{document}